\documentclass{amsart}
%
\usepackage{amsmath}
\usepackage{amssymb}
\usepackage{amsthm}
\usepackage{dsfont}
\usepackage{lineno}
\usepackage{subfigure}
\usepackage{graphicx}
\usepackage{multirow}
\usepackage{color}
\usepackage{xspace}
\usepackage{pdfsync}
\usepackage{hyperref} 
\usepackage{algorithmicx}
\usepackage{algpseudocode}
\usepackage{algorithm}
\usepackage{mathtools}
\usepackage{enumitem}
\graphicspath{{Figures/}}


\DeclareMathOperator*{\argmin}{argmin}

\DeclarePairedDelimiter\floor{\lfloor}{\rfloor}
\newcommand{\bq}{\begin{equation}}
\newcommand{\eq}{\end{equation}}
\newcommand{\R}{\mathbb{R}}

\newcommand{\abs}[1]{\left\vert#1\right\vert}
\newcommand{\norm}[1]{\left\Vert#1\right\Vert}

\newcommand{\G}{\mathcal{G}}
\newcommand{\bO}{\mathcal{O}}

\newcommand{\Dt}{\mathcal{D}}

\newcommand{\Sf}{\mathbb{S}^{2}}
\newcommand{\Tf}{\mathcal{T}}
\newcommand{\Zf}{\mathcal{Z}}

\newcommand{\Nf}{\mathcal{N}}

\newcommand{\MA}{Monge-Amp\`ere\xspace}

\algnewcommand{\LineComment}[1]{\State \(\triangleright\) #1}

\newtheorem{theorem}{Theorem}

\theoremstyle{lemma}
\newtheorem{lemma}[theorem]{Lemma}

\newtheorem{corollary}[theorem]{Corollary}

\newtheorem{definition}[theorem]{Definition}

\newtheorem{remark}[theorem]{Remark}

\newtheorem{hypothesis}[theorem]{Hypothesis}

\theoremstyle{remark}

\makeatletter
\newcommand\appendix@section[1]{%
\refstepcounter{section}%
\orig@section*{Appendix \@Alph\c@section: #1}%
}
\let\orig@section\section
\g@addto@macro\appendix{\let\section\appendix@section}
\makeatother



\begin{document}

\title[A numerical method for optimal transport on the sphere]{A convergent finite difference method for optimal transport on the sphere}

\author{Brittany Froese Hamfeldt}
\address{Department of Mathematical Sciences, New Jersey Institute of Technology, University Heights, Newark, NJ 07102}
\email{bdfroese@njit.edu}
\author{Axel G. R. Turnquist}
\address{Department of Mathematical Sciences, New Jersey Institute of Technology, University Heights, Newark, NJ 07102}
\email{agt6@njit.edu}

\thanks{The first author was partially supported by NSF DMS-1619807 and NSF DMS-1751996. The second author was partially supported by  an NSF GRFP}

\begin{abstract}
We introduce a convergent finite difference method for solving the optimal transportation problem on the sphere. The method applies to both the traditional squared geodesic cost (arising in mesh generation) and a logarithmic cost (arising in the reflector antenna design problem).  At each point on the sphere, we replace the surface PDE with a Generated Jacobian equation posed on the local tangent plane using geodesic normal coordinates.  The discretization is inspired by recent monotone methods for the \MA equation, but requires significant adaptations in order to correctly handle the mix of gradient and Hessian terms appearing inside the nonlinear determinant operator, as well as the singular logarithmic cost function.  Numerical results demonstrate the success of this method on a wide range of challenging problems involving both the squared geodesic and the logarithmic cost functions.
\end{abstract}

\date{\today}    
\maketitle

We consider the problem of optimal transportation on the sphere.  That is, given two prescribed density functions $f_1$ and $f_2$, we seek a mapping $T:\Sf\to\Sf$ such that
\bq\label{eq:OT}
T = \argmin\limits_{T_\# f_1 = f_2} \int_{\Sf} c(x,T(x)) f_1(x) dS(x).
\eq
Here $c(x,y)$ is the cost of transporting a unit of mass from $x$ to $y$ and $T_\# f_1 = f_2$ indicates that
\[ \int_A f_1(x)\,dS(x) = \int_{T(A)} f_2(y)\,dS(y) \]
for every measurable $A \subset \Sf$.

In recent years, optimal transport has emerged as an important component in many different applications including image registration~\cite{HakerRegistration},
 astrophysics (estimating the shape of the early universe)~\cite{FrischUniv}, 
 meteorology~\cite{Cullen}, machine learning~\cite{Peyre_ComputationalOT}, and geophysical inversion problems~\cite{EF_Seismic}, among others.  This has led to the development of a number of different numerical methods and convergence results for solving the optimal transport problem in Euclidean geometries~\cite{BenamouBrenier,BenamouDuval,BFO_OTNum,BuddWilliams,HamfeldtBVP2,Rubinstein_OT, Prins_OT,Schmitzer_OT,Yadav_MA}.

A less well studied but equally important setting is optimal transport between density functions on the sphere.
Perhaps the simplest cost is the squared geodesic distance
\[ c(x,y) = \frac{1}{2}d_{\mathbb{S}^2}(x,y)^2, \]
where $d_{\Sf}(x,y)$ denotes the geodesic distance between $x,y\in\Sf$.  This cost function has recently been applied to the problem of mesh generation on the sphere in the context of meteorology~\cite{McRae_OTonSphere,Weller_OTonSphere}.

A second cost of particular interest is the log cost
\[ c(x,y) = -\log \norm{x-y}, \]
which arises in the reflector antenna design problem~\cite{GlimmOliker_SingleReflector,Wang_Reflector2}. The notation $\norm{\cdot}$ denotes the Euclidean distance in the ambient space $\mathbb{R}^3$.

Recently, some progress has been made in the numerical solution of the Optimal Transport problem on the sphere in the case of the squared geodesic cost.  The work of~\cite{Weller_OTonSphere} used a geometric interpretation of a \MA type equation on the sphere to produce the first such method.  A finite element solution of this \MA type equation was produced in~\cite{McRae_OTonSphere}. For problems posed on a subset of the sphere, the stereographic projection can be used to reframe the problem as an optimal transport problem on the plane (with non-quadratic cost); this was the approach of~\cite{RomijnSphere}. The authors of the present article recently introduced a simple framework for proving the convergence of numerical methods for optimal transport on the sphere~\cite{HT_OTonSphere}. However, to date we are not aware of any methods that fit within this framework.

In this article, we produce the first convergent PDE-based method for solving the Optimal Transport problem on the sphere.  This method is based on an approximation of a Generated Jacobian equation on local tangent planes, using carefully constructed local coordinates.  The discretization is inspired by recent monotone methods that have been proposed for the \MA equation.  However, a complicating factor is the presence of gradient terms mixed with the Hessian terms inside of a nonlinear operator.  This requires the introduction of new techniques for approximating both first- and second-order terms in order to preserve both the consistency and the monotonicity of our scheme.  Additionally, the logarithmic cost function requires a careful regularization in order to preserve the well-posedness of the method.  We produce an implementation and present computational results that demonstrate the success of this method for both the squared geodesic cost and the logarithmic cost. 

\section{Background}\label{sec:background}
\subsection{Optimal transport on the sphere}

We consider points $x,y$ lying on a unit sphere $\Sf$ centered at the origin.  We are interested in two different cost functions $c(x,y)$: the squared geodesic distance on the sphere,
\bq\label{eq:squareCost}
c(x,y) =  \frac{1}{2}d_{\Sf}(x,y)^2 = \frac{1}{2}\left(2\sin^{-1}\left(\frac{\norm{x-y}}{2}\right)\right)^2,
\eq
and the log-cost arising in the reflector antenna problem,
\bq\label{eq:logCost}
c(x,y) = -\log \norm{ x-y }.
\eq

The optimal map corresponding to each cost function is determined from the conditions
\vspace*{-4pt}
\bq\label{eq:mapConditionSphere}
\begin{cases}
\nabla_{\mathbb{S}^2,x}c \left( x,T(x,p) \right) = -p, & x\in\Sf, p\in \Tf_x\\
T(x,p) \in \Sf
\end{cases} \vspace*{-4pt}
\eq
where $\Tf_x$ denotes the tangent plane at $x$. These mappings can be found explicitly for the cost functions we are interested in.  In the case of the squared geodesic cost~\cite{McRae_OTonSphere}, the map is
\bq\label{eq:mapd2}
T(x,p) = \cos\left(\norm{ p } \right) x + \sin\left(\norm{ p } \right)\frac{p}{ \norm{ p }}. 
\eq
In the case of the logarithmic cost~\cite{HT_OTonSphere}, the map is
\bq\label{eq:mapLog}
T(x,p) = x\frac{\norm{ p }^2-1/4}{\norm{ p }^2+1/4}-\frac{p}{\norm{ p }^2+1/4}.
\eq

The solution to the optimal transport problem is then given by
\bq\label{eq:PDE1}
F \left( x,\nabla_{\mathbb{S}^2} u(x), D_{\mathbb{S}^2}^2u(x) \right) = 0
\eq
where
\bq\label{eq:OTPDE}
F(x,p,M) \equiv -\det \left( M+A(x,p) \right) + H(x,p)
\eq
subject to the $c$-convexity (ellipticity) condition, which requires
\bq\label{eq:cconvex}
D_{\mathbb{S}^2}^2u(x) + A(x,\nabla_{\mathbb{S}^2} u(x)) \geq 0.
\eq
Here
\begin{align*}
A(x,p) &= D_{\mathbb{S}^2,xx}^2c \left( x,T(x,p) \right)\\
H(x,p) &= \abs{\det{D_{\mathbb{S}^2,xy}^2c \left( x,T(x,p) \right)}}f_1(x)/f_2 \left( T(x,p) \right),
\end{align*}
and the PDE now describes a nonlinear relationship between the surface gradient and Hessian on the sphere.

This PDE belongs to the class of Generated Jacobian equations~\cite{Trudinger_GJE}, and places constraints on the Jacobian of the mapping $T(x,\nabla u(x))$ in order to force the density $f_1$ to be transported into the density $f_2$.  It is similar to the \MA equation that is seen in optimal transport in Euclidean space (with quadratic cost $c(x,y) = \frac{1}{2}\|x-y\|^2$).  However, the introduction of more complicated geometries and cost functions now leads to a mix of gradient and Hessian terms inside of the nonlinear determinant operator.

This problem was studied by Loeper~\cite{Loeper_OTonSphere}, who showed that under mild regularity requirements on the data, the optimal transport problem on the sphere admits a smooth ($C^3$) solution $u$.  
\begin{hypothesis}[Conditions on data]\label{hyp:Smooth}
We require problem data to satisfy the following conditions:
\begin{enumerate}
\item[(a)] There exists some $m>0$ such that $f_2(x) \geq m$ for all $x\in\Sf$.
\item[(b)] The mass balance condition holds, $\int_{\Sf} f_1(x)\,dx = \int_{\Sf} f_2(y)\,dy$.
\item[(c)] The cost function is either $c(x,y) = \frac{1}{2}d_{\Sf}(x,y)^2$ or $c(x,y) = -\log \norm{ x-y }$.
\item[(d)] The data satisfies the regularity requirements $f_1, f_2 \in C^{1,1}(\Sf)$.
\end{enumerate}
\end{hypothesis}

Weak ($C^1$) solutions are also possible for discontinuous density functions that are only in $L^p$ ($1 \leq p < \infty$).  Moreover, the solution is unique up to additive constants.


\subsection{Approximation of elliptic PDEs}
The PDE operators we consider in this work are degenerate elliptic.
\bq\label{eq:PDEelliptic} F\left( x,u(x),\nabla u(x), D^2u(x) \right) = 0, \quad x\in\bar{\Sf}.\eq
\begin{definition}[Degenerate elliptic]\label{def:elliptic}
The operator
$F:\bar{\Sf}\times\R\times\mathcal{S}^2\to\R$
is \emph{degenerate elliptic} if 
\[ F(x,u,X) \leq F(x,v,Y) \]
whenever $u \leq v$ and $X \geq Y$.
\end{definition}

The PDE operators that we consider in this work are degenerate elliptic if they are non-decreasing functions of the argument $u$ and non-increasing functions of all subsequent arguments (which involve second directional derivatives).

Since degenerate elliptic equations need not have classical solutions, solutions may need to be interpreted in a weak sense.  Moreover, even when smooth solutions exist, the use of weak solutions often provides an easier path to convergent numerical methods for fully nonlinear elliptic equations. The numerical methods developed in this article are guided by the very powerful concept of the viscosity solution~\cite{CIL}.

\begin{definition}[Upper and lower semi-continuous envelopes]\label{def:envelope}
The \emph{upper and lower semi-continuous envelopes} of a function $u(x)$ are defined, respectively, by
\[ u^*(x) = \limsup_{\mathbf{y}\to x}u(\mathbf{y}), 
\quad u_*(x) = \liminf_{\mathbf{y}\to x}u(\mathbf{y}). \]
\end{definition}

\begin{definition}[Viscosity solution]\label{def:viscosity}
An upper (lower) semi-continuous function $u$ is a \emph{viscosity subsolution (supersolution)} of~\eqref{eq:PDEelliptic} if for every $\phi\in C^2(\bar{\Omega})$, whenever $u-\phi$ has a local maximum (minimum)  at $x \in \bar{\Omega}$, then
\[ 
F_*^{(*)}(x,u(x),\nabla\phi(x), D^2\phi(x)) \leq (\geq)  0 .
\]
A continuous function $u$ is a \emph{viscosity solution} of~\eqref{eq:PDEelliptic} if it is both a subsolution and a supersolution.
\end{definition}

The first steps towards constructing convergent methods for fully nonlinear elliptic equations where provided by a powerful framework introduced by Barles and Souganidis~\cite{BSNum} and further developed by Oberman~\cite{ObermanSINUM}.

We consider finite difference schemes that have the form
\bq\label{eq:approx} F^h\left(x,u(x),u(x)-u(\cdot)\right) = 0 \eq
where $h$ is a small parameter relating to the grid resolution.

The convergence framework requires notions of consistency, monotonicity and stability.

\begin{definition}[Consistency]\label{def:consistency}
The scheme~\eqref{eq:approx} is \emph{consistent} with the equation~\eqref{eq:PDEelliptic} if for any smooth function $\phi$ and $x\in\bar{\Omega}$,
\[ \limsup_{\epsilon\to0^+,\mathbf{y}\to x,\xi\to0} F^h\left(\mathbf{y},\phi(\mathbf{y})+\xi,\phi(\mathbf{y})-\phi(\cdot) \right) \leq F^*(x,\phi(x),\nabla\phi(x),D^2\phi(x)), 
\]
\[ \liminf_{\epsilon\to0^+,\mathbf{y}\to x,\xi\to0} F^h\left(\mathbf{y},\phi(\mathbf{y})+\xi,\phi(\mathbf{y})-\phi(\cdot)\right) \geq F_*(x,\phi(x),\nabla\phi(x),D^2\phi(x)). \]
\end{definition}

\begin{definition}[Monotonicity]\label{def:monotonicity}
The scheme~\eqref{eq:approx} is monotone if $F^h$ is a non-decreasing function of its final two arguments.
\end{definition}

\begin{definition}[Stability]\label{def:stability}
The scheme~\eqref{eq:approx} is stable if there exists some $M\in\R$ such that if $u^h$ is any solution of~\eqref{eq:approx} then $\|u^h\|_\infty \leq M$.
\end{definition}

To consistent schemes, we can also associate a local truncation error.
\begin{definition}[Truncation error]\label{def:truncation}
The truncation error $\tau(h) > 0$ of the scheme~\eqref{eq:approx} is a quantity chosen so that for every smooth function $\phi$
\[ \limsup\limits_{h\to0}\max\limits_{x\in\G^h}\frac{\abs{F^h(x,\phi(x),\phi(x)-\phi(\cdot)) - F(x,\nabla\phi(x),D^2\phi(x))}}{\tau(h)} < \infty. \]
\end{definition}

Schemes that satisfy these three properties respect the notion of the viscosity solution at the discrete level.  In particular, these schemes preserve the maximum principle and are guaranteed to converge to the solution of the underlying PDE if that equation satisfies a comparison principle.

\begin{definition}[Comparison principle]\label{def:comparison}
A PDE has a \emph{comparison principle} if whenever $u$ is an upper semi-continuous subsolution and $v$ a lower semi-continuous supersolution of the equation, then $u \leq v$ on $\bar{\Omega}$.
\end{definition}

\begin{theorem}[Convergence~\cite{ObermanSINUM}]\label{thm:convergeVisc}
Let $u$ be the unique viscosity solution of the PDE~\eqref{eq:PDEelliptic}, where $F$ is a degenerate elliptic operator with a comparison principle.  Let the approximation $F^h$ be consistent, monotone, and stable and $u^h$ any solution of the scheme~\eqref{eq:approx}.  Then $u^h$ converges uniformly to $u$ as $h\to0$.
\end{theorem}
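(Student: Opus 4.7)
The plan is to follow the classical Barles--Souganidis argument built on half-relaxed limits. I would start by introducing
\[ \bar u(x) = \limsup_{h\to 0,\,y\to x} u^h(y), \qquad \underline u(x) = \liminf_{h\to 0,\,y\to x} u^h(y). \]
Stability (Definition~\ref{def:stability}) provides a uniform bound on $\{u^h\}$, so these half-relaxed limits are well-defined bounded functions on $\bar\Sf$. By construction $\bar u$ is upper semi-continuous, $\underline u$ is lower semi-continuous, and $\underline u \leq \bar u$ pointwise.

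The heart of the proof is to show that $\bar u$ is a viscosity subsolution and $\underline u$ a viscosity supersolution of~\eqref{eq:PDEelliptic}. I would only sketch the subsolution direction, the other being symmetric. Suppose $\phi \in C^2(\bar\Sf)$ and $\bar u - \phi$ attains a local maximum at $x_0$, which we may assume is strict by subtracting a small penalty such as $d_{\Sf}(x,x_0)^4$ from $\phi$ without changing its value or derivatives at $x_0$. A compactness argument then produces grid points $x_h \in \G^h$ with $x_h \to x_0$ at which $u^h - \phi$ attains a local maximum over a neighborhood containing the full stencil of the scheme, and satisfying $u^h(x_h) \to \bar u(x_0)$. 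Writing $\xi_h = u^h(x_h) - \phi(x_h) \to 0$, the local maximum property reads $u^h(x_h) - u^h(y) \leq \phi(x_h) - \phi(y)$ for every $y$ in the stencil, so monotonicity (Definition~\ref{def:monotonicity}) yields
\[ F^h\bigl(x_h,\phi(x_h)+\xi_h,\phi(x_h)-\phi(\cdot)\bigr) \leq F^h\bigl(x_h, u^h(x_h), u^h(x_h)-u^h(\cdot)\bigr) = 0. \]
Taking $h \to 0$ and applying consistency (Definition~\ref{def:consistency}) delivers
\[ F_*\bigl(x_0,\bar u(x_0),\nabla\phi(x_0),D^2\phi(x_0)\bigr) \leq 0, \]
which is precisely the subsolution inequality.

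The comparison principle (Definition~\ref{def:comparison}) then forces $\bar u \leq \underline u$ on $\bar\Sf$. Combined with $\underline u \leq \bar u$ this gives $\bar u \equiv \underline u$; the common continuous function is both a sub- and a supersolution, so it must coincide with the unique viscosity solution $u$. The coincidence of the half-relaxed limits is equivalent to locally uniform convergence of $u^h$ to $u$, and compactness of $\Sf$ upgrades this to uniform convergence on all of $\bar\Sf$.

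The hardest part will be the discrete maximum-point extraction: producing a grid sequence $x_h \in \G^h$ at which $u^h - \phi$ is dominated by its value at $x_h$ not just pointwise, but across the \emph{entire} stencil entering the argument of $F^h$. This is essential because monotonicity is only useful if $u^h(x_h) - u^h(y) \leq \phi(x_h) - \phi(y)$ holds for every such $y$. One must arrange that stencil widths shrink with $h$ so that the strict maximality of $\bar u - \phi$ at $x_0$ propagates to the discrete setting for all sufficiently small $h$; once this is secured, the monotonicity step is purely algebraic and the consistency step is a direct limit.
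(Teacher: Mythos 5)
Your proposal is correct and is precisely the classical Barles--Souganidis half-relaxed-limits argument (define $\bar u,\underline u$, use monotonicity plus a discrete maximum point to derive the subsolution/supersolution inequalities via consistency, then invoke the comparison principle), which is the approach taken in the cited reference~\cite{ObermanSINUM}; the paper itself states this theorem without proof. The only step you leave partially sketched---extracting grid points $x_h\to x_0$ at which $u^h-\phi$ dominates its value over the entire stencil with $u^h(x_h)\to\bar u(x_0)$---is exactly the point you flag as the remaining technical work, and it goes through in the standard way by maximizing $u^h-\phi$ over $\G^h\cap B(x_0,r)$ for a fixed small $r$ and using strictness of the maximum of $\bar u-\phi$.
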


\subsection{Convergence results for Optimal Transport on the sphere}
The nonlinear PDE~\eqref{eq:OTPDE} that we are interested in does not satisfy a comparison principle, and thus the Barles-Souganidis convergence results do not immediately apply.  However, the authors of this article have recently demonstrated that this framework could be extended to~\eqref{eq:OTPDE} through a careful consideration of the geometry and the introduction of a term that enforces a strong form of stability~\cite{HT_OTonSphere}.

\subsubsection{Reformulation of PDE}\label{sec:PDEFormulation}
The first step in constructing convergent methods is to translate the surface PDE~\eqref{eq:OTPDE} at the point $x_0\in\Sf$ to an equation posed on the local tangent plane $\Tf_{x_0}$.  This requires introducing local coordinates on the tangent plane.  In general, local coordinates will introduce distortions to the Hessian that require the introduction of additional gradient terms.  However, this problem can be avoided with the use of \emph{geodesic normal coordinates}, which preserve distance from the reference point $x_0$.
An explicit expression for these coordinates $v_{x_0}:\Sf \setminus \{-x_0\} \to \Tf_{x_0}$ is given by
\bq\label{eq:normalCoords}
v_{x_0}(x) = x_0\left(1-d_{\Sf}(x_0,x)\cot d_{\Sf}(x_0,x)\right) + x \left(d_{\Sf}(x_0,x)\csc d_{\Sf}(x_0,x)\right).
\eq

For each point $x_0\in \Sf$ we can now define a function $\tilde{u}_{x_0}(z)$ on the relevant tangent plane $\Tf_{x_{0}}$ in a neighborhood of $x_0$ by
\bq\label{eq:tangentFunction}
\tilde{u}_{x_0}(z) = u\left( v_{x_0}^{-1}(z) \right).
\eq
This choice of coordinates allows us to express the PDE~\eqref{eq:OTPDE} at the point $x_0\in\Sf$ as a generalized \MA equation
\bq\label{eq:MATangent}
F(x_0,\nabla\tilde{u}(x_0),D^2\tilde{u}(x_0)) \equiv-\det \left( D^2\tilde{u}(x_0)+A(x_0,\nabla\tilde u(x_0)) \right) + H(x_0,\nabla\tilde u(x_0)) = 0, 
\eq
where all derivatives are now interpreted in the usual sense on a two-dimensional plane.

\begin{remark}
From this point on, we will refer to the surface PDE~\eqref{eq:OTPDE} and the tangent plane representation~\eqref{eq:MATangent} interchangeably, with the implicit understanding that coordinates in the tangent plane are given by the geodesic local coordinates.
\end{remark}

The problem of approximating the Optimal Transport PDE on the sphere can be further simplified by embedding the $c$-convexity (ellipticity) constraint into the equation.  This is accomplished through the introduction of a modified determinant operator satisfying
\begin{equation}\label{eq:detPlus}
\text{det}^{+}(M) = 
\begin{cases}
\text{det}(M), \ \ \ M \geq 0 \\
<0, \ \ \ \text{otherwise}.
\end{cases}
\end{equation}
Then we can absorb the constraint into the PDE~\eqref{eq:MATangent} through the modification
\bq\label{eq:MAPlus}
F^+(x,\nabla u(x),D^2u(x)) \equiv-{\det}^+(D^2{u}(x)+A(x,\nabla u(x))) + H(x,\nabla u(x)) = 0.
\eq

Finally, the solutions of the Optimal Transport PDE satisfy \emph{a priori} Lipschitz bounds $\norm{\nabla u} < R$ for any $R>\pi$ (squared geodesic cost) or $R>C$ (logarithmic cost; see~\cite[Proposition~6.1]{Loeper_OTonSphere} for details).  These bounds can be explicitly built into the PDE through a further modification
\begin{equation}\label{eq:modifiedPDE}
G(x,\nabla u(x),D^2u(x)) \equiv \max \left\{ F^{+}(x,\nabla u(x),D^2u(x)), \norm{ \nabla u(x) } - R \right\} = 0.
\end{equation}
While this modification enforces a condition that is automatically satisfied by solutions at the continuous level, it also improves the stability of approximation schemes.

\subsubsection{Convergence results}
The convergence results of~\cite{HT_OTonSphere} require a mesh or point cloud $\G^h \subset\Sf$ on the sphere that satisfies very mild structural regularity conditions.
We define the discretization parameter $h$ as
\begin{equation}\label{eq:h}
h = \sup\limits_{x\in\Sf}\min\limits_{y\in\G^h} d_{\Sf}(x,y).
\end{equation}
In particular, this guarantees that any ball of radius $h$ on the sphere will contain at least one discretization point.

Then we require a grid that admits a triangulation without any long, skinny triangles.  Specifically:
\begin{hypothesis}[Conditions on point cloud]\label{hyp:grid}
There exists a triangulation $T^h$ of $\G^h$ with the following properties:
\begin{enumerate}
\item[(a)] The diameter of the triangulation, defined as
\bq\label{eq:diam}  \text{diam}(T^h) = \max\limits_{t\in T^h} \text{diam}(t),\eq
 satisfies $\text{diam}(T^h)\to0$ as $h\to0$.
\item[(b)] There exists some $\gamma < \pi$ (independent of $h$) such that whenever $\theta$ is an interior angle of any triangle $t\in T^h$ then $\theta \leq \gamma$.
\end{enumerate}
\end{hypothesis}
We remark that there is no need to construct this triangulation in practice, it need only exist in theory.

We also associate to each point cloud $\G^h$ a search radius $r(h)$ chosen to satisfy
\bq\label{eq:r} r(h)\to0, \, \frac{h}{r(h)} \to 0 \text{ as } h\to 0, \quad \text{diam}(T^h) < r(h). \eq

Next, we project nearby grid points onto the local tangent plan $\Tf_{x_{0}}$, which is spanned by the orthonormal vectors $\left( \hat{\theta},\hat{\phi} \right)$.  For all points $x_i\in\G^h\cap B\left(x_0,r(h) \right)$, we define their projection onto the tangent plane through geodesic normal coordinates via
\bq\label{eq:neighbours}
z_i = x_0 \left(1 - d_{\Sf}(x_0,x_i) \cot d_{\Sf}(x_0,x_i) \right) + x_i \left( d_{\Sf}(x_0,x_i) \csc d_{\Sf}(x_0,x_i) \right).
\eq
Let $\Zf^h(x_0)\subset\Tf_{x_{0}}$ be the resulting collection of points.
See Figure~\ref{fig:sphere}.

\begin{figure}[htp]
\centering
\subfigure[]{
\includegraphics[width=0.5\textwidth,clip=true,trim=0 1in 0 0.8in]{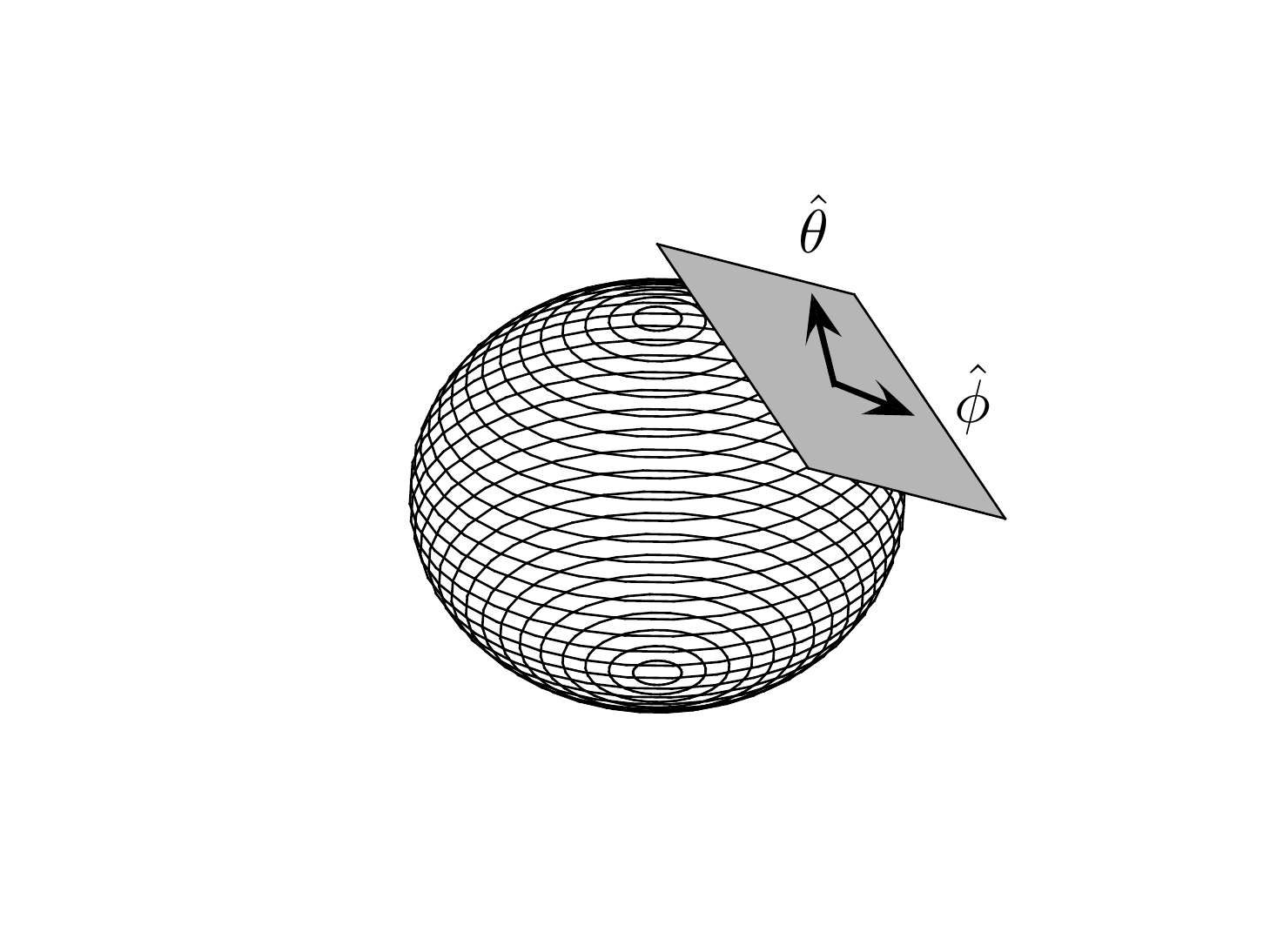}\label{fig:sphere1}} 
\subfigure[]{
\includegraphics[width=0.3\textwidth]{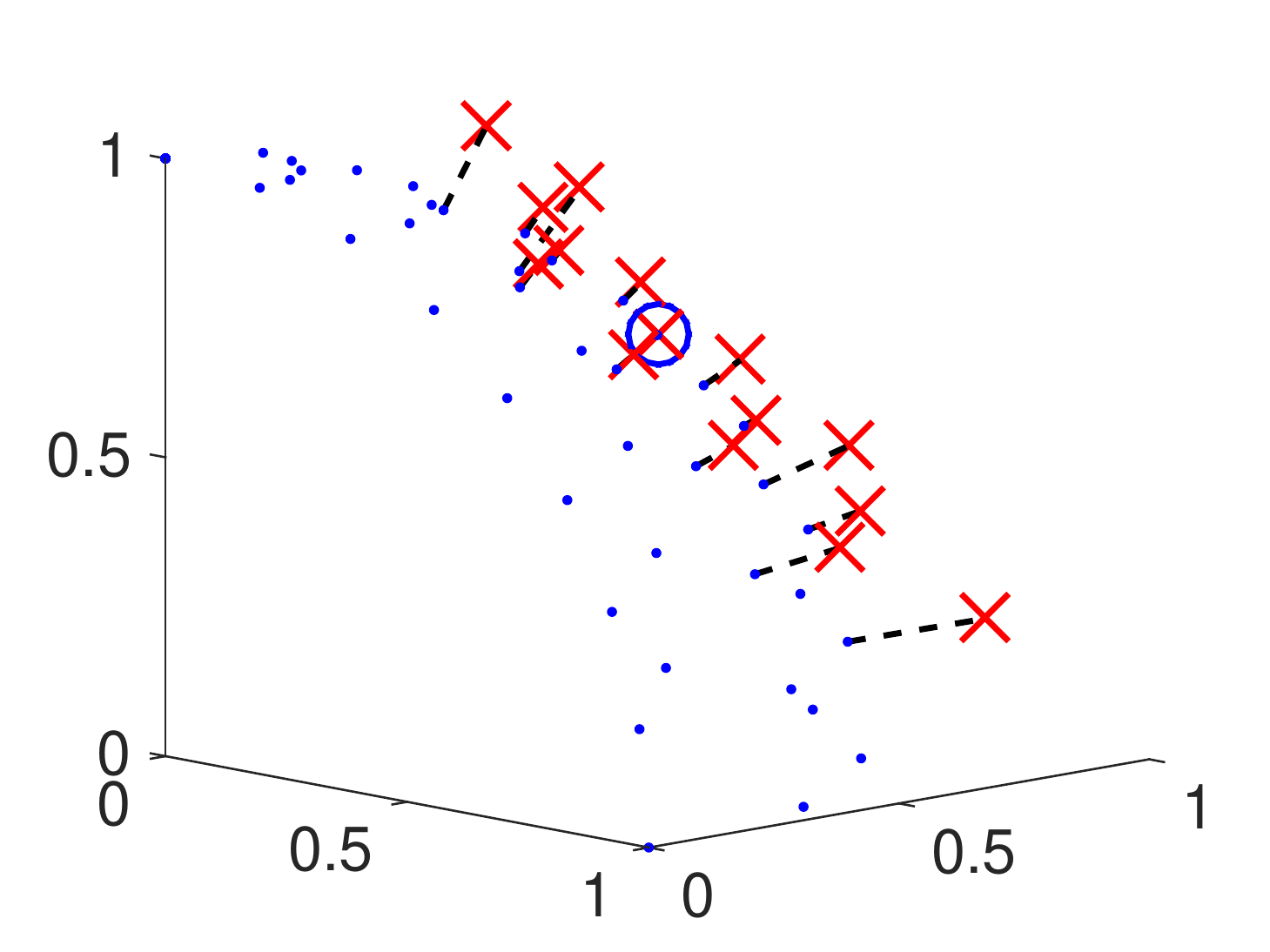}\label{fig:sphere2}}
\caption{\subref{fig:sphere1}~The sphere $\Sf$ and tangent plane $\Tf_{x_{0}}$. \subref{fig:sphere2}~A point cloud discretizing one octant of the unit sphere ($\cdot$), the point $x_0$ (o), and the projections $z$ of neighboring nodes onto $\Tf_{x_{0}}$ ($\times$). }
\label{fig:sphere}
\end{figure}

These are now the discretization points available to use for the approximation of~\eqref{eq:modifiedPDE} at $x_0$; recall that this PDE is posed on the two-dimensional tangent plane.  The convergence result of~\cite{HT_OTonSphere} proposed a specific, simple discretization $E^h$ of the Eikonal term $\|\nabla u(x_0)\|$ that is valid on any grid or point cloud.
\bq\label{eq:eik}
E^h(z,u(z)-u(\cdot)) = \max\limits_{y\in\Zf^h(z)}\frac{ u(z)-u(y) }{\norm{z-y}}.
\eq

Then letting $F^h$ be an approximation of the convexified PDE~\eqref{eq:MAPlus}, we can define an approximation of the modified PDE~\eqref{eq:modifiedPDE} by
\bq\label{eq:approx2}
G^h(z,u(z)-u(\cdot)) = \max\left\{F^h(z,u(z)-u(\cdot)),E^h(z,u(z)-u(\cdot))-R\right\}, \quad z \in \G^h.
\eq

Fixing a point $x^*\in\Sf$ and a sequence $x^*_h\in\G^h \to x^*$, we arrive at the following two-step approach for obtaining a numerical solution $u^h$ of the Optimal Transport problem on the sphere.
\begin{enumerate}
\item[1.] Solve the discrete system
\bq\label{eq:vhScheme}
G^h \left(z,v^h(z)-v^h(\cdot) \right)+\tau(h)v^h(z) = 0, \quad z \in \G^h
\eq
for the grid function $v^h$.
\item[2.] Define the candidate solution
\bq\label{eq:uh}
u^h(z) = v^h(z) - v^h(x^*_h), \quad z \in \G^h.
\eq
\end{enumerate}

Under appropriate assumptions on the density functions $f_1, f_2$ and the approximation scheme $F^h$, this is guaranteed to converge to the solution of the Optimal Transport problem on the sphere.

\begin{theorem}[Convergence of approximation~\cite{HT_OTonSphere}]\label{thm:convergence}
Under the assumptions of Hypothesis~\ref{hyp:Smooth}, let $u\in C^3(\Sf)$ be the unique solution of~\eqref{eq:OTPDE} satisfying $u(x^*) = 0$.  Let $\G^h$ be a grid satisfying Hypothesis~\ref{hyp:grid} and let $F^h$ be any consistent, monotone approximation of~\eqref{eq:modifiedPDE}.  Then for each sufficiently small $h>0$, the grid function $u^h$ defined in~\eqref{eq:uh} is uniquely defined.  Moreover, $u^h$ converges uniformly to $u$ as $h\to0$.
\end{theorem}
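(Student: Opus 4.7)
The plan is to invoke the abstract convergence framework developed in~\cite{HT_OTonSphere}, which adapts the Barles--Souganidis argument to the particular structure of Optimal Transport on the sphere. The three pillars of this framework are: (i)~existence, uniqueness, and stability of the grid function $v^h$ solving~\eqref{eq:vhScheme}; (ii)~consistency and monotonicity of the augmented operator appearing on the left side of~\eqref{eq:vhScheme}; and (iii)~a suitable comparison principle for the modified PDE~\eqref{eq:modifiedPDE}. Once these are in place, the convergence of the re-centered grid functions $u^h$ follows from a standard half-relaxed-limit compactness argument.

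For (i), the stabilization term $\tau(h)v^h(z)$ renders the scheme \emph{proper}, i.e.\ strictly monotone in $v^h(z)$. Since $G^h$ is the maximum of the monotone operator $F^h$ (by hypothesis) and the manifestly monotone finite difference Eikonal operator $E^h-R$ defined in~\eqref{eq:eik}, it is monotone in the neighbouring differences $v^h(z)-v^h(\cdot)$. A standard discrete Perron argument then yields a unique solution $v^h$, and evaluation of~\eqref{eq:vhScheme} at the extrema of $v^h$ produces an a priori bound $\norm{v^h}_\infty \leq C/\tau(h)$. Re-centering via $u^h(z)=v^h(z)-v^h(x^*_h)$ cancels the diverging constant, and the Lipschitz cap built explicitly into~\eqref{eq:modifiedPDE} translates, through $E^h\leq R$, into a discrete Lipschitz bound on $u^h$. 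Combined with $u^h(x^*_h)=0$ and the finite diameter of $\Sf$, this yields a uniform $L^\infty$ bound, which is precisely the stability required by Definition~\ref{def:stability}.

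For (ii), consistency and monotonicity of $G^h$ follow from those of $F^h$, $E^h$ and the scalar operation $\max\{\cdot,\cdot\}$. The stabilization does not disturb consistency in the sense of Definition~\ref{def:consistency}, because along the normalized sequence the quantity $\tau(h)u^h(z)$ vanishes uniformly as $h\to 0$.

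The main obstacle is (iii). The bare Generated Jacobian equation~\eqref{eq:OTPDE} fails to satisfy a comparison principle since solutions are determined only up to additive constants, so Theorem~\ref{thm:convergeVisc} cannot be applied directly. The key insight of~\cite{HT_OTonSphere} is that the ellipticity-enforcing modification ${\det}^+$ in~\eqref{eq:MAPlus}, the explicit Lipschitz cap in~\eqref{eq:modifiedPDE}, and the point-normalization in~\eqref{eq:uh} together restore a comparison principle within the restricted class of normalized viscosity solutions; this is the technical heart of the argument being cited. Granting this, the half-relaxed limits $\bar u(x)=\limsup^* u^h(x)$ and $\underline u(x)=\liminf_* u^h(x)$ are respectively a subsolution and a supersolution of~\eqref{eq:modifiedPDE} with $\bar u(x^*)=\underline u(x^*)=0$, so comparison forces $\bar u=\underline u=u$. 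Since $\underline u\leq \bar u$ pointwise by construction, equality in the envelopes upgrades pointwise convergence to the uniform convergence claimed.
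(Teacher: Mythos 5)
Your decomposition into (i) properness via the $\tau(h)v^h$ term, (ii) consistency and monotonicity of $G^h$, and stability via the Eikonal cap together with the normalization at $x^*_h$ correctly tracks the architecture of the cited result, and the bound $\|v^h\|_\infty \leq C/\tau(h)$ and its cancellation under re-centering are the right observations. The gap is in your pillar (iii). You propose that the modifications restore a sub/supersolution comparison principle within the class of functions vanishing at $x^*$, and then run the half-relaxed-limits argument so that ``comparison forces $\bar u \leq \underline u$.'' But the paper states explicitly that the PDE does \emph{not} satisfy a comparison principle, and the entire purpose of the stabilization and two-step normalization is to work around this; if a restricted comparison for sub/supersolutions held, the Barles--Souganidis theorem would apply directly and the extra machinery would be superfluous.

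What the ``strong form of stability'' actually delivers is precompactness, which substitutes for the missing comparison. The Eikonal cap $E^h \leq R$ together with $u^h(x^*_h)=0$ yields a uniform Lipschitz and hence $L^\infty$ bound, so by Arzel\`a--Ascoli every subsequence of $\{u^h\}$ has a uniformly convergent sub-subsequence. Along such a subsequence the upper and lower half-relaxed limits coincide, so the limit $w$ is simultaneously a viscosity subsolution and supersolution---i.e.\ a viscosity solution---and is normalized so $w(x^*)=0$. Loeper's uniqueness of the normalized solution then forces $w=u$, and since every subsequence has a further subsequence converging to $u$, the full sequence $u^h\to u$ uniformly. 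No comparison principle for sub/supersolution pairs is invoked anywhere. A second point your sketch glosses over: rewriting the stabilized scheme in terms of $u^h$ produces an additional constant $\tau(h)\,v^h(x^*_h)$ on the right-hand side, which is only bounded, not obviously vanishing; showing that this residual tends to zero (so the limit $w$ solves the \emph{unperturbed} equation) is a genuine step of the cited proof, not something that falls out of the $\limsup$/$\liminf$ formalism automatically.
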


\begin{remark}
The above convergence theorem can be extended to weak ($C^1$) solutions by additionally requiring the approximation scheme to underestimate the value of the PDE.
\end{remark}

\section{Formulation of the PDE}\label{sec:PDE}
The PDE~\eqref{eq:OTPDE}-\eqref{eq:cconvex} can be formulated in different equivalent ways.  We begin by briefly describing a reformulation that lends itself to the construction of a discretization that fits within the framework of Theorem~\ref{thm:convergence}.

\subsection{Regularization of logarithmic cost}\label{sec:logCostReg}
One modification of the PDE that we find is necessary to build monotone schemes is to make the logarithmic cost Lipschitz by using a cutoff function. We recall (as discussed in~\autoref{sec:PDEFormulation}) that the solution $u$  to the optimal transport satisfies an \emph{a priori} Lipschitz bound~\cite{Loeper_OTonSphere} $\|\nabla u\| < R$, which also yields the following lower bound on the distance mass can be transported. 
\bq\left\Vert x - y \right\Vert > \frac{1}{\sqrt{R^2+1/4}}\eq\label{eq:logBound}
when $y = T\left( x,\nabla u(x) \right)$ is the exact transport map.

However, the process of solving a discrete version of~\eqref{eq:OTPDE} may evolve through values of $u$ (and consequently $y$) that do not satisfy this bound.  This loss of Lipschitz continuity can lead to a breakdown in monotonicity.  We thus propose the following $C^3$ regularization of the logarithmic cost function, which agrees with the true logarithmic cost when the bound~\eqref{eq:logBound} is satisfied.  
\begin{equation}\label{eq:modlog}
\tilde{c}(x,y) =
\begin{cases}
-\log \left\Vert x - y \right\Vert, \ \ \ \left\Vert x - y \right\Vert \geq \frac{1}{\sqrt{R^2+1/4}} \\
\Psi(\left\Vert x - y \right\Vert), \ \ \ \left\Vert x - y \right\Vert < \frac{1}{\sqrt{R^2+1/4}}
\end{cases}
\end{equation}
where $z_* = \dfrac{1}{\sqrt{R^2+1/4}}$ and 
\[ \Psi(z) = -\log(z_*) - \frac{1}{z_*}(z-z_*)+\frac{1}{2z_*^2}(z-z_*)^2 - \frac{1}{3z_*^3}(z-z_*)^3. \]

Because this regularization does not change the solutions of the PDE, the analysis of~\cite{HT_OTonSphere} and the ultimate convergence result (Theorem~\ref{thm:convergence}) will also apply to discretizations involving this smoother cost function.
\begin{lemma}[Equivalence of solution for modified cost function]\label{lem:equivalence}
Under the assumptions of Hypothesis~\ref{hyp:Smooth}, a function $u\in C^3(\Sf)$ is a solution of~\eqref{eq:OTPDE} with the logarithmic cost~\eqref{eq:logCost} if and only if it is a solution of~\eqref{eq:OTPDE} with the regularized cost~\eqref{eq:modlog}.

\begin{proof}
First let $u$ be a solution using the original cost function $c(x,y) = -\log \left\Vert x - y \right\Vert$.  Because of the \textit{a priori} bounds on the gradient, this automatically satisfies the PDE~\eqref{eq:OTPDE} with the regularized cost function.  

Next let $v$ be any solution of~\eqref{eq:OTPDE} using the regularized cost function $\tilde{c}(x,y)$ from~\eqref{eq:modlog}. Notice that for
 $\left\Vert x - y \right\Vert \geq \frac{1}{\sqrt{R^2+1/4}}$, we have
\begin{equation}
\tilde{c}(x,y) = -\log\left(2 \sin \left( \frac{1}{2} d_{\Sf}(x,y) \right) \right).
\end{equation}
It is easily verified via differentiation that this is convex in $d_{\Sf}(x,y)$ for $\|x-y\| < \dfrac{1}{\sqrt{R^2+1/4}}$, and the original logarithmic cost is also convex in $d_{\Sf}(x,y)$.  The modified cost is $C^3$, so verifying that the second derivative in the variable $d_{\Sf}(x,y)$ is everywhere positive is sufficient to guarantee convexity in this variable. Since this new cost function is Lipschitz and convex in the Riemannian distance, the $\tilde{c}$-convex solution of~\eqref{eq:OTPDE} for the regularized cost $\tilde{c}(x,y)$ is guaranteed to be unique by McCann as noted in~\cite{LoeperReg}.  Since the solution $u$ of the original equation solves this modified equation, uniqueness requires that $v=u$. 
\end{proof}
\end{lemma}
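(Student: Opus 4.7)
The plan is to establish both directions of the equivalence, exploiting the \emph{a priori} gradient bound on one side and a uniqueness theorem for $\tilde c$-convex potentials on the other. The two implications are quite asymmetric: one is essentially substitution, the other requires a nontrivial appeal to McCann's uniqueness result.

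For the ($\Rightarrow$) direction, I would start from $u\in C^3(\Sf)$ solving \eqref{eq:OTPDE} with the logarithmic cost \eqref{eq:logCost}. Loeper's \emph{a priori} Lipschitz bound gives $\|\nabla u(x)\|<R$ for all $x\in\Sf$, and substituting into the explicit transport map \eqref{eq:mapLog} produces exactly the distance lower bound \eqref{eq:logBound}, namely $\|x-T(x,\nabla u(x))\|>z_*$ where $z_*=1/\sqrt{R^2+1/4}$. On this set the regularized cost $\tilde c$ agrees with $c$ together with all derivatives that enter the definitions of $A(x,p)$ and $H(x,p)$, so the PDE operator is identical for the two costs at the argument $(\nabla u,D^2u)$. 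Hence $u$ automatically solves the PDE with the regularized cost \eqref{eq:modlog}.

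For the ($\Leftarrow$) direction, my plan is to invoke McCann's uniqueness theorem for $\tilde c$-convex potentials, as cited in \cite{LoeperReg}. This requires two structural properties of $\tilde c$: global Lipschitz continuity, which is immediate since the logarithmic singularity has been truncated by a cubic polynomial on the set $\|x-y\|<z_*$; and convexity of $\tilde c$ as a function of the Riemannian distance $d_{\Sf}(x,y)$. Convexity is verified piecewise on the two regions $\|x-y\|\ge z_*$ and $\|x-y\|<z_*$, with the $C^3$ matching at the interface bridging the gap at $z_*$. On the outer region, convexity of $-\log(2\sin(d_{\Sf}/2))$ in $d_{\Sf}$ follows from a routine derivative computation on $(0,\pi)$. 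On the inner region, the chain rule applied to $\Psi(2\sin(d_{\Sf}/2))$ produces an expression whose positivity must be checked throughout the cutoff interval. Once McCann's hypotheses hold, the $\tilde c$-convex PDE solution is unique up to an additive constant; since the forward direction shows that $u$ is itself a $\tilde c$-convex solution and the PDE is invariant under additive constants, this forces $v=u$.

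The main technical obstacle lies in the inner-region convexity check for $\Psi$ composed with the geodesic-distance map. The cubic is designed so that $\Psi$ matches $-\log z$ to third order at $z_*$, but composing with $z=2\sin(d_{\Sf}/2)$ introduces mixed terms involving both $\Psi'$ and $\Psi''$, and one must track signs across the entire cutoff interval rather than only in a neighborhood of $z_*$. Everything else reduces to direct substitution or citation of established results.
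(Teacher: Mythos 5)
Your proposal follows essentially the same two-step approach as the paper: the forward direction by direct substitution using the \emph{a priori} gradient bound, and the reverse direction by verifying that $\tilde c$ is Lipschitz and convex in $d_{\Sf}$ and then invoking McCann's uniqueness theorem as cited in \cite{LoeperReg}, from which $v=u$ follows since $u$ also solves the modified problem. The only difference is that you flag the inner-region convexity verification (the chain rule applied to $\Psi(2\sin(d_{\Sf}/2))$) as a nontrivial sign-tracking exercise, whereas the paper asserts it is easily verified via differentiation; this is a reasonable point of caution but does not represent a different route.
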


\begin{remark}\label{rmk:nonsmoothlog}
Because this regularization transforms the Optimal Transport problem with a singular cost function into an Optimal Transport problem with a smooth cost function, the techniques of~\cite{HT_OTonSphere} (which were introduced for the squared geodesic cost) can be extended to show that weak ($C^{0,1}$) solutions of this modified problem are also unique.  This assumption of uniqueness of $C^{0,1}$ viscosity solutions was needed to prove convergence even in the smooth setting.  With minor modification, Theorem~\ref{thm:convergence} can also be extended to handle convergence to weak solutions under much milder regularity requirements on the data; see~\cite[Theorem~36]{HT_OTonSphere}. These modifications are addressed in~\autoref{sec:nonsmooth}.
\end{remark}

In the development and analysis of our numerical method in the following sections, we will often refer to the cost function $\tilde{c}$.  In the case of the logarithmic cost, this refers to the regularized cost~\eqref{eq:modlog}.  In the case of the squared geodesic cost, $\tilde{c}$ will refer to the original cost function $c$, which is automatically smooth.



\subsection{Variational formulation of the determinant of a Hessian}\label{sec:var}

Since our PDE involves computing the determinant of a Hessian, here we show how to do this so as to later build a monotone discretization of the second derivatives. As utilized in~\cite{FO_MATheory}, Hadamard's inequality allows the determinant of a positive definite matrix $M$ to be computed via the minimization problem
\begin{equation}
\det M = \min_{v_i \in V} \prod_{i=1}^{d} v_{i}^{T} M v_i 
\end{equation}
where $V$ is the set of all orthogonal bases for $\R^d$.

We require a formulation satisfying~\eqref{eq:detPlus}, which modifies this formulation to ensure that no negative terms appear when the symmetric matrix $M$ is not positive definite.  A simple approach is to use
\begin{equation}\label{eq:detPlusUse}
{\det}^+ M = \min_{v_i \in V} \prod_{i=1}^{d} \max\left\{v_{i}^{T} M v_i, 0\right\}. 
\end{equation}

If our matrix $M = D^2\phi(x)$ is a Hessian matrix, this becomes:
\begin{align*}
{\det}^+(D^2 \phi (x)) &= \min_{\nu_1, \nu_2 \in V} \left\{\prod_{j=1}^2 \nu_j^T (D^2 \phi(x)) \nu_j, 0 \right\}\\
  &= \min_{\nu_1, \nu_2 \in V} \prod_{j=1}^2 \max \left\{ \frac{\partial^2 \phi}{\partial \nu_j^2}, 0 \right\}.
\end{align*}

In particular, this allows us to replace the determinant in~\eqref{eq:OTPDE} with
\bq\label{eq:detPDE}
{\det}^+(D^2u(x) + A(x,\nabla u(x))) = \min_{\nu_1, \nu_2 \in V} \prod_{j=1}^2 \max \left\{ \frac{\partial^2 u(x)}{\partial \nu_j^2} + \left.\frac{\partial^2c(x,y)}{\partial\nu_j^2}\right|_{y = T(x,\nabla u(x))}, 0 \right\}.
\eq

\subsection{Mixed Hessian}\label{sec:mixedHessian}

The framework developed by the authors of this article in~\cite{HT_OTonSphere} only requires the construction of consistent approximations of derivatives with respect to $x$, expressed in the geodesic normal coordinate system. For the mixed Hessian term, $\det D^2_{xy} c(x,y)$ it is not immediately clear how to do this. In fact, the relative simplicity of the notation obfuscates the actual complexity of the object. In~\cite{LoeperReg}, Loeper outlines a clearer representation of this quantity in curved geometries and for different cost functions. 

We recall that the optimal map $T(x,p)$ (also known as the $c$-exponential map) satisfies~\eqref{eq:mapConditionSphere} and can be constructed explicitly for the cost function of interest to us via~\eqref{eq:mapd2}-\eqref{eq:mapLog}.  Then from~\cite{LoeperReg}, the mixed Hessian satisfies
\begin{equation}\label{representationformula}
[D^2_{xy}c]^{-1} = -D_{p} T(x,p) \vert_{x,p = -\nabla_{x}c(x,y)}.
\end{equation}


This representation formula~\eqref{representationformula} shows that the inverse of the mixed Hessian is simply the Jacobian of the map $T(x,p)$ with respect to $p$. Since we will be taking the determinant, this is actually a change of area formula  for the transformation $T(x,p)$. See Figure~\ref{fig:mixedhessian}.

\begin{figure}[htp]
\includegraphics[height = 6.5cm]{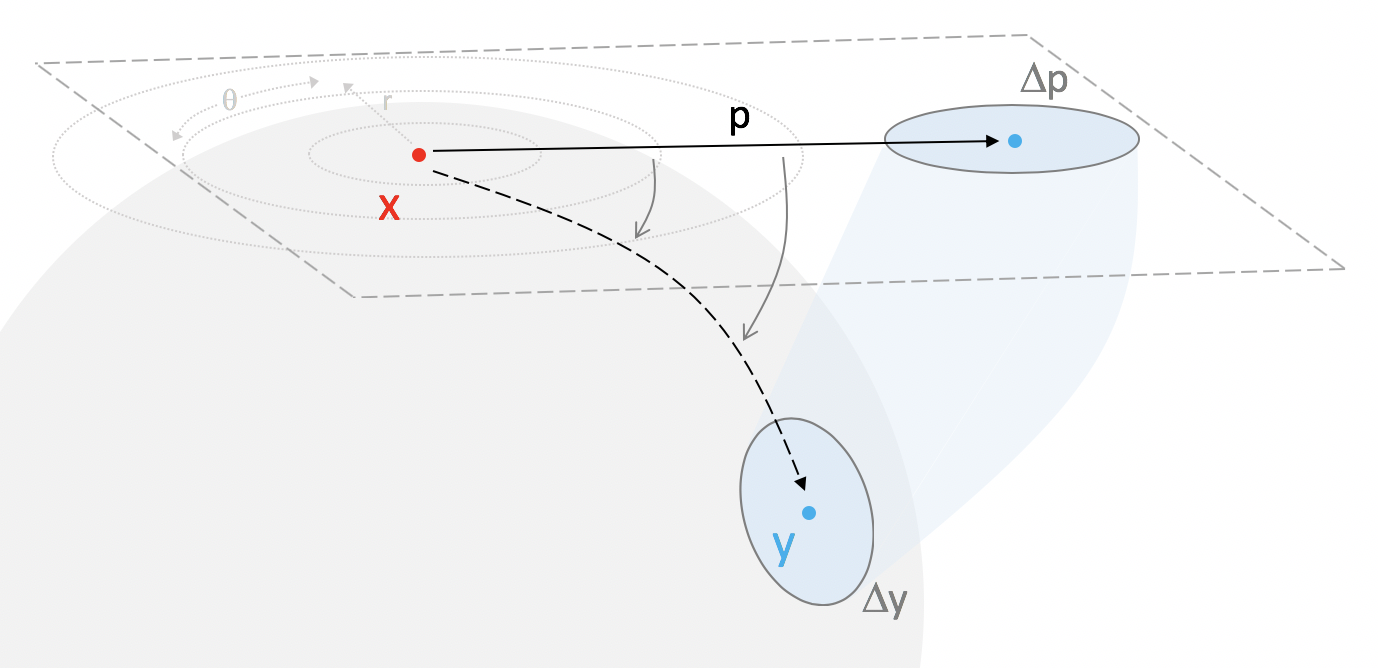}
\caption{The determinant of the mixed Hessian is the change in area formula from the set $T(x,E)$ on the sphere sphere to the set $E$ on the local tangent plane}
\label{fig:mixedhessian}
\end{figure}

We will compute this change of area by computing the linear differential map $dT$ for both cost functions. That is, the change in area will be computed using orthogonal perturbations $\Delta p_1$, $\Delta p_2$ (so $\Delta p_1 \cdot \Delta p_2 = 0$) in the tangent plane $\mathcal{T}_x$.
\begin{align*}
 \left|\det\right.&\left.(D_pT(x,p))\right| = \\ & \left\vert \lim_{\left\Vert \Delta p_1 \right\Vert \rightarrow 0} \frac{T(x,p+\Delta p_1)-T(x,p)}{\| \Delta p_1\|} \times \lim_{\left\Vert \Delta p_2 \right\Vert \rightarrow 0} \frac{T(x,p+\Delta p_2)-T(x,p)}{\|\Delta p_2\|} \right\vert. \end{align*}

In general, the area element on a manifold is a function of the wedge product of two covectors, which need not be orthogonal.  This has the interpretation of the area of a parallelogram on the manifold.  However, in the special case where the vectors are orthogonal (or are orthogonal to leading order), this reduces to an ordinary product.  This is indeed the case for both the squared geodesic and logarithmic cost functions.  Thus the change of area formula reduces to the simpler expression
\begin{align*}
 \left|\det\right.&\left.(D_pT(x,p))\right| = \\ &\lim_{\left\Vert \Delta p_1 \right\Vert, \left\Vert \Delta p_2 \right\Vert \rightarrow 0, \Delta p_1 \cdot \Delta p_2 = 0} \frac{d_{\mathbb{S}^2} \left( T(x,p), T(x,p+ \Delta p_1) \right) d_{\mathbb{S}^2} \left( T(x,p), T(x,p + \Delta p_2) \right) }{\left\Vert \Delta p_1 \right\Vert \left\Vert \Delta p_2 \right\Vert}
\end{align*}
and the determinant of the mixed Hessian is given by
\bq\label{eq:mixedHessian}
\abs{\det(D^2_{xy}c(x,y))} = \left.\frac{1}{\abs{D_{p} T(x,p)}} \right|_{p = -\nabla_{x}c(x,y)}.
\eq


%

This can be computed explicitly for both the squared geodesic cost,
\begin{equation}\label{eq:mixedd2}
\abs{\det (D^2_{xy} c(x,y)) } = \frac{\left\Vert p \right\Vert}{\sin \left\Vert p \right\Vert},
\end{equation}
and for the logarithmic cost,
\begin{equation}\label{eq:mixedLog}
\abs{\det (D^2_{xy} c(x,y)) } = \left( \left\Vert p \right\Vert^2 + 1/4 \right)^2.
\end{equation}
See Appendix~\ref{app:mixedHessian} for details.

We note also that these formulas coincide with the formulas that can be derived via standard change of variables formulas by requiring
\[ \int_{T(x,E)}dS = \int_E \abs{\det(D_pT(x,p))}\,dp \]
for every measurable $E\in\mathcal{T}(x)$.

\section{Numerical Method}\label{sec:scheme}
We now explain how we actually construct an approximation scheme for~\eqref{eq:OTPDE} at a point $x\in\Sf$.  In~\autoref{sec:convergence}, we will demonstrate that this method does converge to the true solution of the optimal transport problem.

\subsection{Construction of finite difference stencils}\label{sec:stencils}
We begin with a point cloud $\G \in \Sf$ that discretizes the sphere; we assume only the minimal regularity required by Hypothesis~\ref{hyp:grid}.

We begin by considering a fixed point $x_i \in \G$ and establishing a computational neighborhood $N(i)$ about this point. For any fixed $C>0$, we define
\begin{equation}\label{eq:neighborhood}
N(i) = \{j \mid x_j\in\G, \, d_{\mathbb{S}^2}(x_i, x_j) \leq C\sqrt{h}  \}.
\end{equation}

Once $N(i)$ is established, the points $x_j \in N(i)$ are projected on to the local tangent plane $\mathcal{T}_{x_{i}}$ via a geodesic normal coordinate projection~\eqref{eq:normalCoords}.  
\[z_j = v_{x_i}(x_j). \] 
We denote the resulting point cloud on the tangent plane by
\[ \Nf(i) = \{z_j \mid j \in N(i)\} \subset \Tf_{x_{i}}. \]
Figure~\ref{fig:tangentplane} shows an exaggerated example of this tangent plane projection.

\begin{figure}[htp]
	\includegraphics[height=11cm]{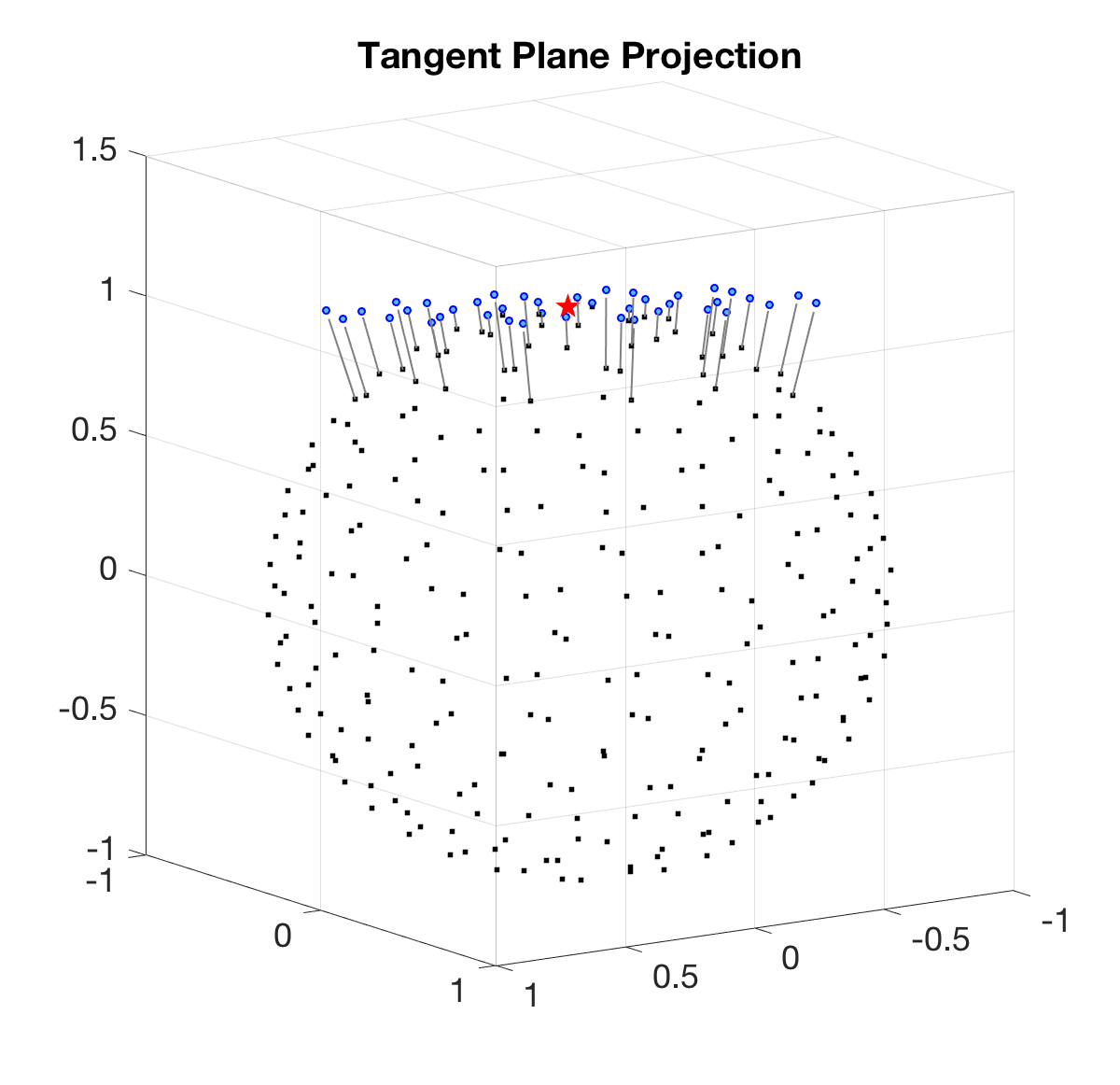}
	\caption{Projection onto the tangent plane via geodesic normal coordinates. The points in $\G$ are indicated with small black squares, the computational point $x_i$  is indicated as a red star, and the projection of the neighborhood $N(i)$ to the local tangent plane is indicated with blue circles.}
\label{fig:tangentplane}
\end{figure}

Next we suppose that we are interested in resolving behavior along some direction $\nu\in\R^2$.  Following a slight modification of~\cite{FroeseMeshfreeEigs}, we select four points $x_{\nu,j} \in \Nf(i)$ that are well-aligned with the direction $\nu$.  See Figure~\ref{fig:neighbors}.

We introduce the following notation:
\begin{itemize}
\item $r = C\sqrt{h}$ is the search radius used to define the neighborhood $N(i)$.  For all $x_{\nu,j}\in \Nf(i)$ we have that $\| x_{\nu,j}-x_i\| \leq r$.
\item $\theta_{\nu,j}$ is the angle between $x_{\nu,j}-x_i$ and the direction $\nu$.
\item $d \theta_{\nu, j}$ is the minimal absolute angle between $x_{\nu,j}-x_i$ and the direction ${\nu}$
\item $d\theta$ is the overall angular resolution of the stencil, which is related to the search radius through $d\theta = \dfrac{2h}{r} + \bO(h) = \bO(\sqrt{h})$; see~\cite{FroeseMeshfreeEigs}.
\item Each $x_{\nu,j}$ can be represented in polar coordinates as $(h_{\nu,j},\theta_{\nu,j})$ using the coordinate system where $x_i$ is the origin and the coordinate directions $\nu,\nu^\perp$ are orthogonal.
\item Components of $x_{\nu,j}$ can be expressed using the shorthand notation
\[ C_{\nu,j} = h_{\nu,j}\cos \theta_{\nu,j}, \quad S_{\nu,j} = h_{\nu,j}\sin \theta_{\nu,j}. \]
\end{itemize}

The following lemma follows immediately from the proof of~\cite[Lemma~11]{FroeseMeshfreeEigs}.  Figure~\ref{fig:unifElliptic} illustrates the four small balls where each of these four neighbors is required to exist.
\begin{lemma}[Properties of neighbors]\label{lem:neighbors}
For every $x_i\in\G$ and $\nu\in\R^2$, four neighbors $x_{\nu,j}\in\Nf(i)$ exist satisfying the following properties:
\begin{itemize}
\item $x_{\nu,j}$ resides in the $jth$ quadrant.
\item The angular component of $x_{\nu,j}$ satisfies $d\theta \leq d\theta_{\nu,j} \leq 2d\theta$.
\item The radial component of $x_{\nu,j}$ satisfies $r-2h \leq h_{\nu,j} \leq r$.
\end{itemize} 
\end{lemma}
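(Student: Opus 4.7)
The plan is to reduce the statement to the planar result~\cite[Lemma~11]{FroeseMeshfreeEigs} by verifying that the projected point cloud $\Nf(i) \subset \Tf_{x_{i}}$ inherits the density and shape-regularity properties of the sphere grid $\G$.

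First, I would note that geodesic normal coordinates~\eqref{eq:normalCoords} are isometric along radial directions emanating from $x_i$, and distort tangential distances between points within $N(i)$ by at most $\bO(r^2) = \bO(h)$, since $r = C\sqrt{h}$. Consequently the sphere density bound $h = \sup_{x\in\Sf}\min_{y\in\G} d_{\Sf}(x,y)$ transfers to the tangent plane: every point in the projected ball $B(0,r)\subset\Tf_{x_{i}}$ lies within Euclidean distance $h(1+o(1))$ of some $z_j \in \Nf(i)$. Moreover, the bounded-angle property in Hypothesis~\ref{hyp:grid}(b) carries over to the projected triangulation with only a mild modification of constants, since the coordinate map has Jacobian converging to the identity as $h\to 0$.

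Next, I would observe that the planar argument of~\cite{FroeseMeshfreeEigs} seeks, for each quadrant $j$ determined by the orthonormal pair $\nu,\nu^\perp$, a point in the annular sector
\[
S_j = \left\{ z \in \Tf_{x_{i}} : r - 2h \leq \|z\| \leq r,\ d\theta \leq d\theta_{\nu}(z) \leq 2 d\theta,\ z \text{ in quadrant } j \right\},
\]
where $d\theta_{\nu}(z)$ denotes the minimal angle between $z$ and direction $\nu$. Because $d\theta = 2h/r + \bO(h)$, this sector has tangential width approximately $2h$ at radius $r$ and radial width exactly $2h$, so it contains a Euclidean disk of radius slightly greater than $h$. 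The density property established above then guarantees $S_j \cap \Nf(i) \neq \emptyset$, and any $x_{\nu,j}\in\Nf(i)\cap S_j$ automatically satisfies the three claimed properties.

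The main obstacle is verifying that the curvature-induced $\bO(h)$ distortion in the geodesic normal projection is absorbed by the tolerances built into $S_j$. Since the tangential tolerance $2d\theta - d\theta$ is of order $\sqrt{h}$ while the distortion is of order $h$, and since the radial tolerance $2h$ exactly matches the sphere density, we have ample slack provided the constant $C$ in the search radius $r = C\sqrt{h}$ is chosen sufficiently large. With this choice, the planar existence argument of~\cite{FroeseMeshfreeEigs} applies verbatim on the tangent plane.
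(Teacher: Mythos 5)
Your approach matches the paper's exactly: the paper simply states that this lemma ``follows immediately from the proof of~\cite[Lemma~11]{FroeseMeshfreeEigs}'' (a planar result), and your proposal fleshes out why the geodesic normal projection lets that planar argument apply on the tangent plane. The structure (transfer the density bound to $\Tf_{x_i}$, locate the annular sector $S_j$, invoke density to find a grid point there) is the right reconstruction.

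One point worth correcting in your last paragraph: the slack that absorbs the curvature distortion does not come from choosing $C$ large. Increasing $C$ enlarges $r=C\sqrt{h}$ but leaves the radial width of $S_j$ fixed at exactly $2h$ and leaves its tangential width at $r\,d\theta \approx 2h$, since $d\theta \sim 2h/r$; indeed, a larger $r$ actually \emph{increases} the relative curvature distortion $1-\sin r/r \sim r^2/6$. The slack instead comes from the $\bO(h)$ correction term in $d\theta = 2h/r + \bO(h)$: this makes the tangential width $2h + \bO(h^{3/2})$, which comfortably exceeds the $\bO(h^2)$ tangential stretching that the projection applies to a geodesic ball of radius $h$. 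The radial direction needs no slack at all since $v_{x_i}$ is radially exact (so $\bigl|\,\|z_k\|-\|z_0\|\,\bigr| \le d_{\Sf}(x_k,x) \le h$ by the reverse triangle inequality, keeping $\|z_k\|$ in $[r-2h,r]$). With that attribution corrected, the argument is sound and reflects what the paper is implicitly relying on.
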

These additional requirements on stencil will be critical to developing monotone approximations of functions of the gradient.

\begin{figure}[htp]
	\subfigure[]{\includegraphics[width=0.45\textwidth]{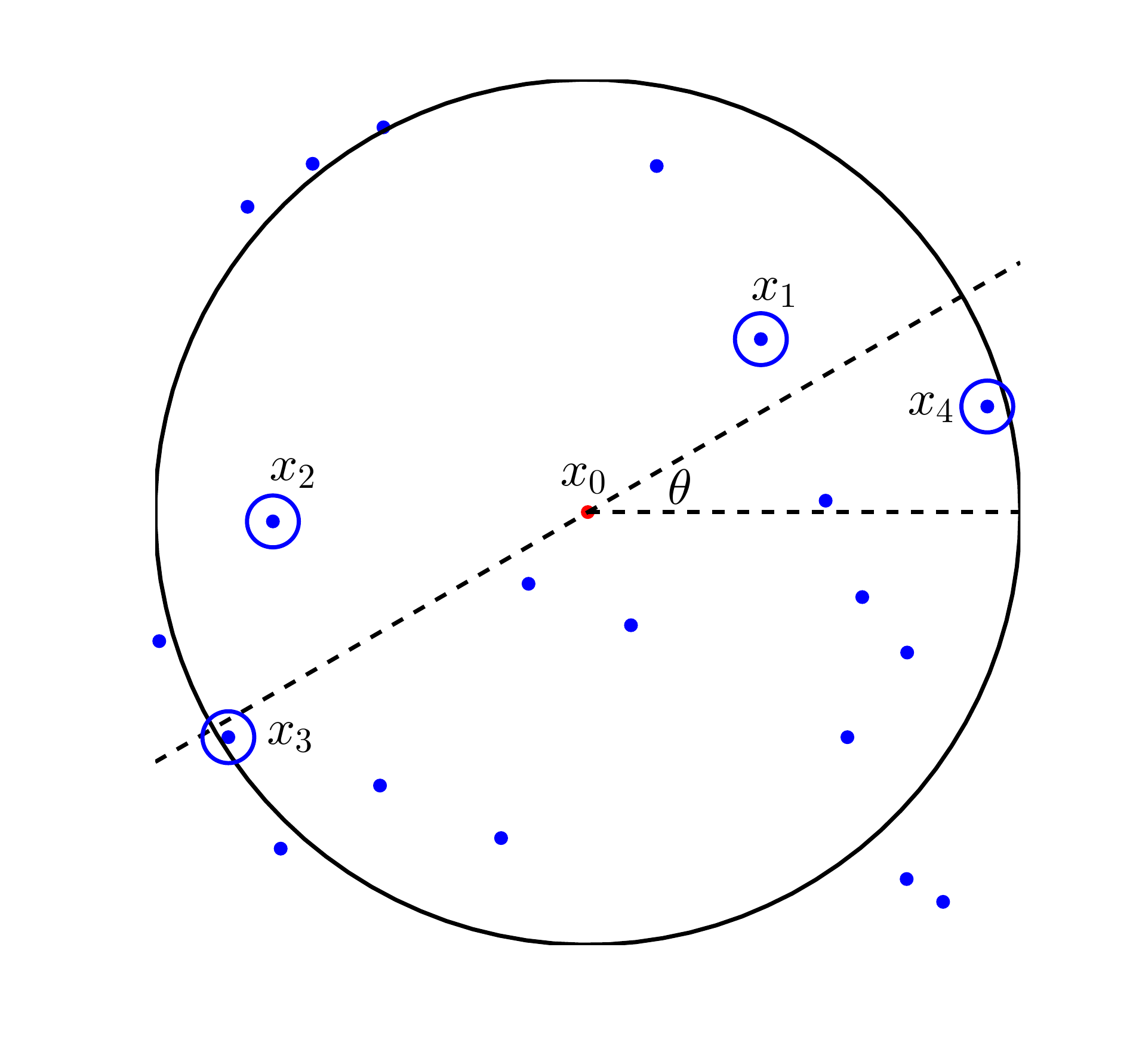}\label{fig:neighbors}}
	\subfigure[]{\includegraphics[width=0.5\textwidth]{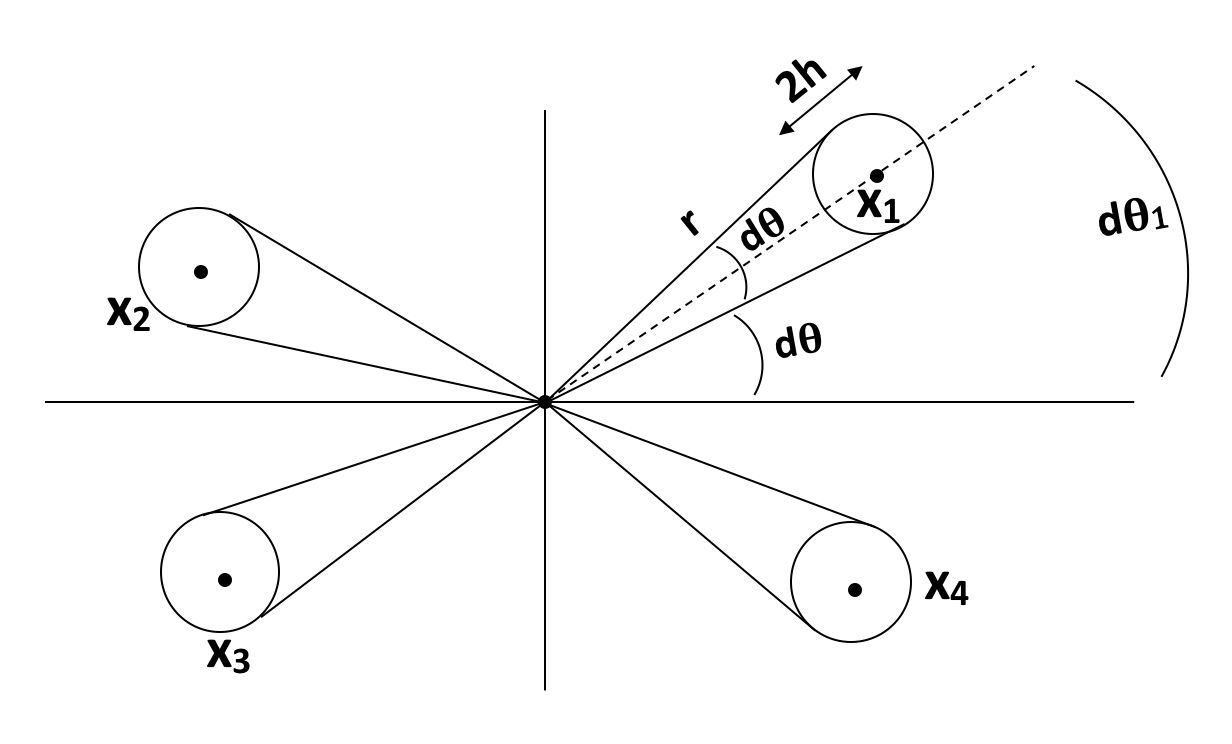}\label{fig:unifElliptic}}
	\caption{Choice of computational points $x_{\nu,j}$ in the local tangent plane.}
	\label{fig:meshfree}
\end{figure}

\subsection{Approximation of second derivatives}
Our overall approximation of~\eqref{eq:OTPDE} will hinge on the construction of (negative) monotone schemes for second directional derivatives $\dfrac{\partial^2\phi}{\partial\nu^2}$.  We introduce approximations of the form
\begin{equation}
\Dt_{\nu\nu}\phi(x_i) =  \sum_{j=1}^{4} a_{\nu,j}\left( \phi(x_{\nu,j}) - \phi(x_i) \right).
\end{equation}

As in~\cite{FroeseMeshfreeEigs}, consistency and negative monotonicity can be achieved by finding a solution of the system
\begin{equation}
\begin{cases}
\sum a_{\nu,j} h_{\nu,j} \cos \theta_{\nu,j} = 0 \\
\sum a_{\nu,j} h_{\nu,j} \sin \theta_{\nu,j} = 0 \\
\sum \frac{1}{2}a_{\nu,j} h_{\nu,j}^2 \cos^2 \theta_{\nu,j} = 1 \\
a_{\nu,j} \geq 0
\end{cases}
\end{equation}

An explicit solution is given by
\bq\label{eq:coeffs}
\begin{split}
a_{\nu,1} &= \frac{2S_{\nu,4}(C_{\nu,3}S_{\nu,2}-C_{\nu,2}S_{\nu,3})}{\det(A)}\\
a_{\nu,2} &= \frac{2S_{\nu,3}(C_{\nu,1}S_{\nu,4}-C_{\nu,4}S_{\nu,1})}{\det(A)}\\
a_{\nu,3} &= \frac{-2S_{\nu,2}(C_{\nu,1}S_{\nu,4}-C_{\nu,4}S_{\nu,1})}{\det(A)}\\
a_{\nu,4} &= \frac{-2S_{\nu,1}(C_{\nu,3}S_{\nu,2}-C_{\nu,2}S_{\nu,3})}{\det(A)}
\end{split}
\eq
where \bq\label{eq:detA}\begin{split}\det(A) = &(C_{\nu,3}S_{\nu,2}-C_{\nu,2}S_{\nu,3})(C_{\nu,1}^2S_{\nu,4}-C_{\nu,4}^2S_{\nu,1})\\&-(C_{\nu,1}S_{\nu,4}-C_{\nu,4}S_{\nu,1})(C_{\nu,3}^2S_{\nu,2}-C_{\nu,2}^2S_{\nu,3}).\end{split}\eq

\subsection{Approximation of functions of the gradient}
The PDE~\eqref{eq:OTPDE} involves several terms of the form $g(\nabla u)$.  Importantly, either automatically or through appropriate regularization (see \autoref{sec:logCostReg}), each of these functions $g$ has a bounded Lipschitz constant $L_g$.  This allows us to pursue a generalized Lax-Friedrichs type discretization of the form
\bq\label{eq:discGrad}
\begin{split}
\tilde{g}^\pm(\Dt \phi(x_i)) = & g\left(\sum\limits_{\nu\in\{(1,0),(0,1)\}}\nu\sum\limits_{j=1}^4 b_{\nu,j}\left(\phi(x_{\nu,j})-\phi(x_i)\right)\right)\\ &\mp \epsilon_g\sum\limits_{\nu\in\{(1,0),(0,1)\}}\sum\limits_{j=1}^4a_{\nu,j}\left(\phi(x_{\nu,j})-\phi(x_i)\right).
\end{split}
\eq
Above, the coefficients $a_{\nu,j}$ are identical to the coefficients that arise in the approximation of second directional derivatives.  This introduces a Laplacian regularization term, which is carefully chosen to enforce monotonicity (or negative monotonicity) even if the coefficients $b_{\nu,j}$ do not on their own produce a monotone scheme.

We first require coefficients $b_{\nu,j}$ that ensure that
\[ \Dt_{\nu}\phi(x_i) = \sum\limits_{j=1}^4 b_{\nu,j}\left(\phi(x_{\nu,j})-\phi(x_i)\right)\]
is a consistent approximation of the first directional derivative $\dfrac{\partial\phi(x_i)}{\partial\nu}$.

Taylor expanding, we obtain
\[
\Dt_{\nu}\phi(x_i)= \sum_{j=1}^{4} b_{\nu,j} \left(h_{\nu,j} \cos \theta_{\nu,j} \frac{\partial\phi(x_i)}{\partial\nu} + h_{\nu,j} \sin \theta_{\nu,j} \frac{\partial\phi(x_i)}{\partial\nu^\perp} + \mathcal{O}(r^2)\right).
\]
Consistency then requires a solution of the system
\begin{equation}\label{eq:coeffsGrad}
\begin{cases}
\sum b_{\nu,j} h_{\nu,j} \cos \theta_{\nu,j} = 1 \\
\sum b_{\nu,j} h_{\nu,j} \sin \theta_{\nu,j} = 0.
\end{cases}
\end{equation}

An explicit solution is
\begin{equation}
\begin{split}
b_{\nu,1} = \frac{S_{\nu,4} (S_{\nu,3} C_{\nu,2}^2 - S_{\nu,2} C_{\nu,3}^2)}{\det(A)} \\
b_{\nu,2} = -\frac{S_{\nu,3} (S_{\nu,4} C_{\nu,1}^2 - S_{\nu,1} C_{\nu,4}^2)}{\det(A)} \\
b_{\nu,3} = \frac{S_{\nu,2} (S_{\nu,4} C_{\nu,1}^2 - S_{\nu,1} C_{\nu,4}^2)}{\det(A)} \\
b_{\nu,4} = -\frac{S_{\nu,1} (S_{\nu,3} C_{\nu,2}^2 - S_{\nu,2} C_{\nu,3}^2)}{\det(A)}
\end{split}
\end{equation}
where $\det(A)$ is again given by~\eqref{eq:detA}.

We then substitute these coefficients into~\eqref{eq:discGrad} and define a regularization factor satisfying
\bq\label{eq:epsilon}
\epsilon_g =  \max\left\{\frac{L_g\abs{b_{\nu,j}}}{a_{\nu,j}} \mid j\in\{1,2,3,4\}, \, \nu \in \{(0,1),(1,0)\}\right\}.
\eq
We will verify that this is finite and bounded in~\autoref{sec:convergence}.

\subsection{Approximation of the nonlinear operator}
We now have the building blocks in place to describe a discretization of the full nonlinear operator~\eqref{eq:OTPDE} at the point $x_i\in\G$.

The variational formulation of the modified determinant~\eqref{eq:detPlusUse} requires performing a minimization over the set $V$ of orthogonal bases for $\R^2$.  In the discrete version, we consider a finite subset of $V$ that ensures that all directions are resolved in the limit $h\to0$.  A simple choice is given by
\bq\label{eq:directions}
\tilde{V} = \left\{(\cos\theta,\sin\theta) \mid \theta = jd\theta, \, j = 0, \ldots, \frac{\pi}{2d\theta}\right\}
\eq
where $d\theta = \bO\left(\sqrt{h}\right)$ is the angular resolution of the stencil described in~\autoref{sec:stencils}.

The PDE involves several different functions of the gradient. For compactness, we introduce the shorthand notation
\bq\label{eq:gradFunctions1}
\begin{split}
g_{1,\nu}(x_i,p) = \Dt_{\nu\nu}\tilde{c}(x_i,T(x_i,p))
\end{split}
\eq
where the differencing is performed only in the first argument of $\tilde{c}$.  This involves the explicit formulas for the optimal map given in~\eqref{eq:mapd2}-\eqref{eq:mapLog}.  

 We also define
\bq\label{eq:gradFunctions2}
\begin{split}
g_2(x_i,p) &= \frac{\abs{\det D^2_{xy}c\left(x_i,T(x_i,p)\right)}}{f_2\left(T(x_i,p)\right)}
\end{split}
\eq
recalling that the determinant of the mixed Hessian can be replaced with the simple explicit representations obtained in~\autoref{sec:mixedHessian}, which we here denote by $H(p)$.

The discretization of these functions of the gradient require a regularization parameter~\eqref{eq:epsilon}, which involves Lipschitz bounds on these functions.  We select bounds satisfying
\bq\label{eq:L1}
L_{g_{1}} > L_{\tilde{c}} L_T
\eq 
and
\bq\label{eq:L2}
L_{g_{2}} \geq \left\|\frac{1}{f_2}\right\|^2\left(\|f_2\| L_{H} + \| H\| L_{f_2}L_T\right)
\eq
where  $L_T$ is the Lipschitz constant of the optimal map $T(x,p)$ with respect to the variable $p$, $L_H$ and $L_{f_2}$ are the Lipschitz constants of the functions $H$ and $f_2$ respectively, and
\bq\label{eq:Lc} L_{\tilde{c}} = \max\limits_{\abs{\nu}=1,x,y\in\Sf} \left\| \nabla_y \left(\nu^T D_{xx}^2\tilde{c}(x,y) \nu\right) \right\|. \eq
We remark that while these constants $L_{g_{1}}$ and $L_{g_2}$ depend on the problem data and particular cost function, they are all guaranteed to be bounded under the assumptions of Hypothesis~\ref{hyp:Smooth} and can be computed explicitly using the formulas in~\eqref{eq:mapd2},\eqref{eq:mapLog}, \eqref{eq:modlog}, \eqref{eq:mixedd2}, and \eqref{eq:mixedLog}.

We can then write down the full discretization of~\eqref{eq:OTPDE} as
\bq\label{eq:discOTPDE}
\begin{split}
F^h(x,u(x)-u(\cdot)) = &-\min\limits_{(\nu_1,\nu_2)\in\tilde{V}}\prod\limits_{j=1}^2 \max\left\{ \Dt_{\nu_j\nu_j}u(x_i)+\tilde{g}_{1,\nu_j}^-(\Dt u(x_i)), 0\right\}\\ &+ f_1(x_i) \tilde{g}_2^+\left(x_i,\Dt u(x_i)\right).\end{split}
\eq

\subsection{Solution method}
In order to efficiently obtain a convergent approximation to~\eqref{eq:OTPDE}, we will slightly modify the two-step procedure described
in~\eqref{eq:vhScheme}-\eqref{eq:uh}.  We propose instead the following solution and verification process, which is equivalent.
\begin{enumerate}
\item[1.] Solve the discrete system
\bq\label{eq:vhScheme}
F^h(x,v^h(x)-v^h(\cdot))+\sqrt{h}v^h(x) = 0, \quad x \in \G
\eq
for the grid function $v^h$.
\item[2.] Verify that the grid function $v^h$ satisfies the bounds
\[ E^h(x,v^h(x)-v^h(\cdot)) \leq R, \quad x \in \G. \]
\item[3.] If the verification step fails, redefine $v^h$ by solving the modified system~\eqref{eq:vhScheme}
\[G^h \left(x,v^h(x)-v^h(\cdot) \right)+\sqrt{h}v^h(x) = 0, \quad x \in \G
\]
using the solution obtained in Step 1 as an initial guess.
\item[4.] Define the discrete solution
\bq\label{eq:uh2}
u^h(x) = v^h(x) - v^h(x^*_h), \quad x \in \G.
\eq
\end{enumerate}

We note that in practice, we have never found Step 3 above to be necessary.  Thus although this procedure appears to be longer than simply performing Steps~3-4, it actually allows us to obtain the same solution by solving a simpler system.

The strong nonlinearity in the PDE~\eqref{eq:OTPDE}, particularly in that it involves nonlinear gradient terms that have very little required structure, makes the construction of a nonlinear Gauss-Jacobi, algebraic multigrid, and/or approximate Newton-type method highly nontrivial. In the present work, we perform all our computations using explicit parabolic schemes of the form
\[v_{n+1}^h(x_i) = v_n^h(x_i) - \Delta t F^h(v_n^h(x_i), v_n^h(x_i)-v_n^h(\cdot)).\]

As discussed in~\cite{ObermanSINUM}, $\Delta t$ has to satisfy a nonlinear CFL condition in order to guarantee convergence. In particular, we require $\Delta t < 1/L_{F^{h}}$, where $L_{F^{h}}$ is the Lipschitz constant of $F^h$ with respect to the arguments $u^h_i$. This Lipschitz constant scales like $L_{F^{h}} = \mathcal{O}(h^{-2})$ and can either be determined explicitly \emph{a priori} or adaptively under a requirement that the residual should decrease.  In some cases, acceleration of this process is possible using the approach of~\cite{SchaefferHou}.

Faster solvers for these kinds of systems are, of course, desirable and will be explored in future work.

\section{Convergence}\label{sec:convergence}
We are now prepared to prove that the numerical method defined by~\eqref{eq:discOTPDE} and the subsequent solution procedure converges.
\begin{theorem}[Convergence]\label{thm:convergenceScheme}
Under the assumptions of Hypothesis~\ref{hyp:Smooth}, let $u\in C^3(\Sf)$ be the unique solution of~\eqref{eq:OTPDE} satisfying $u(x^*) = 0$.  Let $\G^h$ be a grid satisfying Hypothesis~\ref{hyp:grid} and let $F^h$ be defined as in~\eqref{eq:discOTPDE}.  Then for each sufficiently small $h>0$, the grid function $u^h$ defined in~\eqref{eq:uh2} is uniquely defined.  Moreover, $u^h$ converges uniformly to $u$ as $h\to0$.
\end{theorem}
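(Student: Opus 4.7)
The strategy is to invoke Theorem~\ref{thm:convergence}. Since Hypothesis~\ref{hyp:Smooth} on the data and Hypothesis~\ref{hyp:grid} on the point cloud are assumed, the work reduces to verifying that $F^h$ in~\eqref{eq:discOTPDE} is a monotone, consistent approximation of the convexified PDE~\eqref{eq:MAPlus} (so that $G^h=\max\{F^h,E^h-R\}$ is automatically a monotone, consistent approximation of the Eikonal-regularized equation~\eqref{eq:modifiedPDE}), together with showing that for all sufficiently small $h$ the modified solution procedure~\eqref{eq:vhScheme}--\eqref{eq:uh2} produces the same grid function as the two-step procedure~\eqref{eq:vhScheme}--\eqref{eq:uh} underlying Theorem~\ref{thm:convergence}.

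Monotonicity of $F^h$ reduces to a signed-coefficient check on each stencil. The coefficients $a_{\nu,j}$ in~\eqref{eq:coeffs} are non-negative by construction, so $\Dt_{\nu\nu}u$ is non-increasing in $u(x_i)-u(x_{\nu,j})$. For the Lax--Friedrichs discretization $\tilde g_{1,\nu}^-$ of the gradient term in~\eqref{eq:discGrad}, the partial derivative of $g_{1,\nu}$ with respect to $u(x_{\nu,j})-u(x_i)$ is bounded in absolute value by $L_{g_1}|b_{\nu,j}|$, while the Laplacian-type regularization contributes $+\epsilon_{g_1}a_{\nu,j}$. The defining choice~\eqref{eq:epsilon} ensures $\epsilon_{g_1}a_{\nu,j}\geq L_{g_1}|b_{\nu,j}|$, forcing $\tilde g_{1,\nu}^-$ to be non-increasing in $u(x_i)-u(x_{\nu,j})$, and therefore so is $\Dt_{\nu_j\nu_j}u+\tilde g_{1,\nu_j}^-(\Dt u)$. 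This negative monotonicity is preserved by $\max(\cdot,0)$, by products of non-negative non-increasing functions, and by $\min$ over $\tilde V$, so $-\min\prod\max\{\cdots,0\}$ is monotone in the sense of Definition~\ref{def:monotonicity}. The additive term $f_1(x_i)\tilde g_2^+(x_i,\Dt u)$ is monotone by the analogous argument applied to the $+$ regularization, and $F^h$ is therefore monotone.

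For consistency, I would substitute a smooth test function $\phi$ and Taylor expand. The defining linear constraints on $a_{\nu,j}$ and $b_{\nu,j}$, together with the geometric bounds of Lemma~\ref{lem:neighbors}, give $\Dt_{\nu\nu}\phi(x_i)\to\partial^2_\nu\phi(x_i)$ and $\sum_\nu\nu\sum_j b_{\nu,j}(\phi(x_{\nu,j})-\phi(x_i))\to\nabla\phi(x_i)$ with an explicit truncation error. Lemma~\ref{lem:neighbors} also yields $|b_{\nu,j}|/a_{\nu,j}=\bO(\sqrt{h})$ uniformly, so the regularization parameters satisfy $\epsilon_{g_1},\epsilon_{g_2}=\bO(\sqrt{h})\to 0$; in particular, the Laplacian-type terms $\epsilon_g\sum a_{\nu,j}(\phi(x_{\nu,j})-\phi(x_i))$ vanish in the limit. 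Thus $\tilde g_{1,\nu}^\pm(\Dt\phi)\to g_{1,\nu}(x_i,\nabla\phi(x_i))$ and similarly for $\tilde g_2^\pm$, where the Lipschitz constants $L_{g_1},L_{g_2}$ in~\eqref{eq:L1}--\eqref{eq:L2} are finite under Hypothesis~\ref{hyp:Smooth} and the regularization~\eqref{eq:modlog} of the log cost, as follows from the explicit formulas~\eqref{eq:mapd2}--\eqref{eq:mapLog} and~\eqref{eq:mixedd2}--\eqref{eq:mixedLog}. Since $d\theta=\bO(\sqrt{h})\to 0$, the discrete set $\tilde V$ becomes dense in the space $V$ of orthonormal bases, so the discrete $\min_{(\nu_1,\nu_2)\in\tilde V}$ recovers the continuous variational formulation~\eqref{eq:detPlusUse} of $\det^+$. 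On $c$-convex smooth test functions the two formulations agree with the true determinant; any mismatch at non-admissible test functions is absorbed by the semi-continuous envelopes $F^*,F_*$ appearing in Definition~\ref{def:consistency}. Finally, the \emph{a priori} bound $\norm{\nabla u}<R$ of~\cite{Loeper_OTonSphere} has strict margin, so once convergence of the iterate produced in Step~1 is known, the Eikonal check $E^h(v^h)\leq R$ is satisfied for all small $h$, Step~3 never triggers, and~\eqref{eq:vhScheme}--\eqref{eq:uh2} produces the same grid function as the two-step procedure of Theorem~\ref{thm:convergence}.

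The main obstacle is the tension between monotonicity and consistency in the Lax--Friedrichs parameter $\epsilon_g$: monotonicity demands $\epsilon_g\geq L_g|b_{\nu,j}|/a_{\nu,j}$ uniformly over all stencils, while consistency demands $\epsilon_g\to 0$. Establishing the uniform bound $|b_{\nu,j}|/a_{\nu,j}=\bO(\sqrt{h})$ via the quadrant placement and angular/radial resolution guarantees of Lemma~\ref{lem:neighbors} is the technical core that reconciles these two demands. A secondary difficulty is verifying that the discrete $\min\prod\max$ over $\tilde V$ is consistent with $\det^+$ in the viscosity sense, which requires careful handling of non-PSD test matrices through the semi-continuous envelopes $F^*,F_*$.
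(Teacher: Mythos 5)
Your proof takes essentially the same route as the paper: you correctly reduce the theorem to invoking Theorem~\ref{thm:convergence} after verifying monotonicity and consistency of $F^h$, and you correctly identify the key technical reconciliation as the bound $|b_{\nu,j}|/a_{\nu,j} = \bO(\sqrt{h})$ (from the coefficient estimates $a_{\nu,j} \geq C/h$ and $|b_{\nu,j}| \leq C/\sqrt{h}$ that the paper proves in Lemmas~\ref{lem:bounds2}--\ref{lem:bounds1}), which forces $\epsilon_g = \bO(\sqrt{h}) \to 0$ while simultaneously dominating the Lipschitz perturbation in the Lax--Friedrichs terms. Your additional remarks---the observation that the modified Step~1--4 solution procedure coincides with the two-step procedure of Theorem~\ref{thm:convergence} since Step~3 never triggers due to the strict margin in the \emph{a priori} gradient bound, and the care taken over non-admissible test matrices---are consistent with the paper's intent even though the paper disposes of them more summarily (the paper restricts its consistency lemma to $C^2$ test functions satisfying the $c$-convexity constraint~\eqref{eq:cconvex} rather than appealing to envelopes).
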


This result follows immediately from the framework described in Theorem~\ref{thm:convergence} provided we can verify that our approximation scheme $F^h$ is consistent (Lemma~\ref{lem:consistent}) and monotone (Lemma~\ref{lem:monotone}).  This will be accomplished in several lemmas throughout the remainder of this section.

\subsection{Bounds on coefficients}
We begin by demonstrating that the coefficients $a_{\nu,j}, b_{\nu,j}$ appearing in the approximation of the second directional derivatives can be bounded.

\begin{lemma}[Bounds on coefficients (second derivatives)]\label{lem:bounds2}
There exists a constant $C>0$ such that for all sufficiently small $h>0$ and $\nu\in\R^2$, the coefficients defined by~\eqref{eq:coeffs} satisfy
\[  a_{\nu,j} \geq \frac{C}{h} .\]
\end{lemma}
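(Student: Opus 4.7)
The plan is to unpack the closed-form expressions~\eqref{eq:coeffs}--\eqref{eq:detA} and combine them with the stencil geometry supplied by Lemma~\ref{lem:neighbors}, extracting the leading-order behaviour in $h$.

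First I would convert Lemma~\ref{lem:neighbors} into precise size and sign estimates for $C_{\nu,j}=h_{\nu,j}\cos\theta_{\nu,j}$ and $S_{\nu,j}=h_{\nu,j}\sin\theta_{\nu,j}$. The radial bound $h_{\nu,j}\in[r-2h,r]$ with $r=C\sqrt h$ gives $h_{\nu,j}=\Theta(\sqrt h)$. Since each neighbour is nearly aligned with the line through $\nu$ at angular offset $d\theta_{\nu,j}\in[d\theta,2d\theta]$, where $d\theta=\Theta(\sqrt h)$, we have $|\cos\theta_{\nu,j}|=1-\mathcal O(h)$ and $|\sin\theta_{\nu,j}|=\Theta(\sqrt h)$. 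Hence $|C_{\nu,j}|=\Theta(\sqrt h)$ and $|S_{\nu,j}|=\Theta(h)$, with the signs of $C_{\nu,j}$ and $S_{\nu,j}$ fixed by the quadrant in which $x_{\nu,j}$ sits (positive/negative $C$ according as $j\in\{1,4\}$ or $j\in\{2,3\}$, and analogously for $S$).

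Next I would substitute these asymptotics into~\eqref{eq:coeffs} and~\eqref{eq:detA}. Expanding $\det(A)$, each of its two summands is of order $h^{7/2}$, and the quadrant-imposed sign pattern forces the two summands to have opposite signs; the subtraction prescribed in~\eqref{eq:detA} therefore combines constructively to give $|\det(A)|=\Theta(h^{7/2})$ with a definite sign. Doing the same in the numerator of $a_{\nu,1}$, the inner factor $C_{\nu,3}S_{\nu,2}-C_{\nu,2}S_{\nu,3}$ is a difference of two terms of equal magnitude $\Theta(h^{3/2})$ and opposite signs, so it too combines constructively to $\Theta(h^{3/2})$; multiplying by $S_{\nu,4}=\Theta(h)$ gives a numerator of magnitude $\Theta(h^{5/2})$. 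The remaining cases $j=2,3,4$ are parallel.

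Taking the quotient yields $a_{\nu,j}=\Theta(h^{5/2}/h^{7/2})=\Theta(1/h)$, which produces the desired lower bound $a_{\nu,j}\ge C/h$ uniformly in $\nu$ for all sufficiently small $h$. The main obstacle is the sign bookkeeping needed to verify non-cancellation in both the numerator and $\det(A)$: the argument uses the \emph{full} two-sided angular bound from Lemma~\ref{lem:neighbors}, since $d\theta_{\nu,j}\ge d\theta$ is what keeps $|\sin\theta_{\nu,j}|$ bounded below by a multiple of $\sqrt h$, and the prescribed quadrant placement is what dictates the signs of all cross products $C_{\nu,i}S_{\nu,k}$ appearing in~\eqref{eq:coeffs}--\eqref{eq:detA} so that the leading contributions combine additively rather than cancel.
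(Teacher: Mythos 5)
Your proof is correct and takes essentially the same approach as the paper's: substitute the asymptotics $|C_{\nu,j}|=\Theta(\sqrt h)$, $|S_{\nu,j}|=\Theta(h)$ (deriving from Lemma~\ref{lem:neighbors} together with $r,d\theta=\bO(\sqrt h)$) into~\eqref{eq:coeffs}--\eqref{eq:detA}, use the quadrant-dictated sign pattern to verify that both the numerator $2S_{\nu,4}(C_{\nu,3}S_{\nu,2}-C_{\nu,2}S_{\nu,3})$ and $\det(A)$ combine constructively with no leading-order cancellation, and conclude $a_{\nu,j}=\Theta(1/h)$. The paper's proof is just slightly more explicit about the numerical constants (e.g., bounding $\det(A)\leq 32r^5 d\theta^2+\bO(h^{9/2})$ and the numerator $\geq 4r^3d\theta^2+\bO(h^3)$), but the ingredients and the structure of the argument are the same.
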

\begin{proof}
We establish a bound for the coefficient $a_{\nu,1}$; the remaining coefficients are similar.

Recall the notation $C_{\nu,j} = h_{\nu,j}\cos \theta_{\nu,j}$, $S_{\nu,j} = h_{\nu,j}\sin \theta_{\nu,j}$.  Since each $x_{\nu,j}$ lies in the $jth$ quadrant, each of these terms has a definite sign.  Based on the requirements on $h_{\nu,j}, d\theta_{\nu,j}$ established in Lemma~\ref{lem:neighbors}, we can record the asymptotic bounds
\begin{align*}
 (r-2h)\left(1-\bO(d\theta^2)\right) &\leq \abs{C_{\nu,j}} \leq r\\
   (r-2h)\left(d\theta - \bO(d\theta^3)\right) &\leq \abs{S_{\nu,j}} \leq r\left(2d\theta + \bO(d\theta^3)\right).
\end{align*}
We recall also that $r,d\theta = \bO(\sqrt{h})$.

These observations allow us to establish the following bounds:
\[
\begin{split}\det(A) = &(C_{\nu,3}S_{\nu,2}-C_{\nu,2}S_{\nu,3})(C_{\nu,1}^2S_{\nu,4}-C_{\nu,4}^2S_{\nu,1})\\&-(C_{\nu,1}S_{\nu,4}-C_{\nu,4}S_{\nu,1})(C_{\nu,3}^2S_{\nu,2}-C_{\nu,2}^2S_{\nu,3})\\
&\leq r^5(4d\theta + \bO(d\theta^3))(4d\theta+\bO(d\theta^3)) + r^5(4d\theta + \bO(d\theta^3))(4d\theta+\bO(d\theta^3))\\
&= 32r^5d\theta^2 + \bO(h^{9/2})
\end{split}
\]
and 
\[
\begin{split}
-2S_{\nu,4}(C_{\nu,2}S_{\nu,3} - C_{\nu,3}S_{\nu,2}) &\geq 2(r-2h)^3(d\theta-\bO(d\theta^3))\left(2d\theta-\bO(d\theta^3)\right)\\
  &= 4r^3d\theta^2 + \bO(h^3).
\end{split}
\]

Combining these bounds, we obtain
\[ a_{\nu,1} \geq \frac{4r^3d\theta^2 + \bO(h^3)}{32r^5d\theta^2+\bO(h^{9/2})} = \frac{1}{8r^2}\left(1+\bO(\sqrt{h})\right)\]
where $r^2 = \bO(h)$.
\end{proof}

\begin{lemma}[Bounds on coefficients (first derivatives)]\label{lem:bounds1}
There exists a constant $C>0$ such that for all sufficiently small $h>0$ and $\nu\in\R^2$, the coefficients defined by~\eqref{eq:coeffsGrad} satisfy
\[ \abs{b_{\nu,j}} \leq  \frac{C}{\sqrt{h}}.\]
\end{lemma}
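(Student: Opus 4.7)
The plan is to closely parallel the structure of Lemma~\ref{lem:bounds2}: bound the numerator of each $b_{\nu,j}$ from above, bound $\abs{\det(A)}$ from below, and divide. The same ceiling and floor estimates on $\abs{C_{\nu,j}}$ and $\abs{S_{\nu,j}}$ developed there can be reused, namely that each $\abs{C_{\nu,j}}$ is of size $r$ (up to $\bO(h)$ corrections) and each $\abs{S_{\nu,j}}$ is of size $r\,d\theta$ (up to $\bO(h^{3/2})$ corrections), with $r,d\theta = \bO(\sqrt{h})$.

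First I would bound the numerators from above using the triangle inequality. For instance, $\abs{S_{\nu,4}(S_{\nu,3}C_{\nu,2}^2 - S_{\nu,2}C_{\nu,3}^2)} \leq \abs{S_{\nu,4}}(\abs{S_{\nu,3}}C_{\nu,2}^2 + \abs{S_{\nu,2}}C_{\nu,3}^2)$, and substituting the ceiling estimates yields a bound of the form $8r^4 d\theta^2 + \bO(h^{7/2})$. The remaining three numerators are handled identically. This step is routine.

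The hard part will be producing a lower bound on $\abs{\det(A)}$: a priori, since $\det(A)$ is defined as a difference of two comparable quantities, there is a risk of cancellation yielding a vanishingly small denominator. To rule this out I would perform a sign analysis based on the quadrant in which each $x_{\nu,j}$ resides. Because the signs of $C_{\nu,j}$ and $S_{\nu,j}$ are determined by the quadrant (positive/positive, negative/positive, negative/negative, and positive/negative for $j = 1,2,3,4$ respectively), a short case check reveals that the two products appearing in~\eqref{eq:detA} carry opposite overall sign, so the subtraction is in fact additive in absolute value. Applying the floor estimates to each of the bracketed factors then gives $\abs{\det(A)} \geq 8r^5 d\theta^2 + \bO(h^{9/2})$. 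Combining this with the numerator bound and using $r = \bO(\sqrt{h})$, one obtains $\abs{b_{\nu,j}} \leq r^{-1}(1 + \bO(\sqrt{h})) = \bO(1/\sqrt{h})$, which is the claimed estimate.
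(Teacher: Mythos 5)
Your proposal matches the paper's proof of Lemma~\ref{lem:bounds1}: both upper-bound the numerators via the ceiling estimates on $\abs{C_{\nu,j}}$ and $\abs{S_{\nu,j}}$, lower-bound $\det(A)$ via the floor estimates, and conclude from $r = \bO(\sqrt h)$. You helpfully make explicit the quadrant-based sign analysis showing the two products appearing in $\det(A)$ carry opposite signs so the subtraction is constructive rather than cancelling, a point the paper leaves implicit; the only minor quibble is that the lower-order correction in the $\det(A)$ floor should read $\bO(h^4)$ (driven by $(r-2h)^5 - r^5 = \bO(r^4 h)$) rather than $\bO(h^{9/2})$, though this does not affect the conclusion.
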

\begin{proof}
We proceed as in the proof of Lemma~\ref{lem:bounds2} and compute the bounds
\[\begin{split}\det(A) = &(C_{\nu,3}S_{\nu,2}-C_{\nu,2}S_{\nu,3})(C_{\nu,1}^2S_{\nu,4}-C_{\nu,4}^2S_{\nu,1})\\&-(C_{\nu,1}S_{\nu,4}-C_{\nu,4}S_{\nu,1})(C_{\nu,3}^2S_{\nu,2}-C_{\nu,2}^2S_{\nu,3})\\
&\geq 8(r-2h)^5\left(1-\bO(d\theta^2)\right)^3\left(d\theta-\bO(d\theta^3)\right)^2\\
&= 8r^5d\theta^2 + \bO(h^4)
\end{split}\]
and
\[
0 \leq S_{\nu,4}(S_{\nu,3}C_{\nu,2}^2-S_{\nu,2}C_{\nu,3}^2) \leq 2r^4(2d\theta+\bO(d\theta^3))^2 = 8r^4d\theta^2 + \bO(h^4).
\]

Combining these, we find that
\[ 0 \leq b_{\nu,1} \leq \frac{8r^4d\theta^2+\bO(h^4)}{8r^5d\theta^2+\bO(h^4)}  = \frac{1}{r}\left(1+\bO(\sqrt{h})\right) \]
with $r = \mathcal{O}\left(\sqrt{h} \right)$.

The other coefficients are similar, though some are positive and some are negative.
\end{proof}

\subsection{Bounds on Lipschitz constants}
We next establish Lipschitz bounds on the functions $g_{1,\nu}, g_2$ defined in~\eqref{eq:gradFunctions1} and~\eqref{eq:gradFunctions2}, which play an important role in the discretization of functions of the gradient.

\begin{lemma}[Lipschitz bound on $g_{1,\nu}$]\label{lem:lipg1}
Let $x_i\in\G$ be fixed.  Then for $p\in\Tf_{x_i}$, the function
\[ g_{1,\nu}(p) = \Dt_{\nu\nu}\tilde{c}(x_i,T(x_i,p)).
 \]
has a Lipschitz constant $L$ satisfying
\[ L \leq L_{\tilde{c}} L_T + \bO(\sqrt{h}). \]
\end{lemma}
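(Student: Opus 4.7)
The plan is to Taylor-expand the discrete operator $\Dt_{\nu\nu}$ applied to the smooth map $\phi_p(x) := \tilde c(x, T(x_i, p))$ around $x_i$, identify the leading term with the exact continuous directional second derivative $\partial_\nu^2\phi_p(x_i)$, and then bound the Lipschitz-in-$p$ contribution of each remaining piece separately.

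First I would write, for $p_1, p_2 \in \Tf_{x_i}$,
$g_{1,\nu}(p_1) - g_{1,\nu}(p_2) = \sum_{j=1}^{4} a_{\nu,j}\bigl[\phi_{p_1}(x_{\nu,j}) - \phi_{p_1}(x_i) - \phi_{p_2}(x_{\nu,j}) + \phi_{p_2}(x_i)\bigr]$,
and Taylor-expand each $\phi_{p_k}(x_{\nu,j})$ around $x_i$ in powers of the local coordinates $(C_{\nu,j}, S_{\nu,j})$ to second order with an integral third-order remainder $R_{\nu,j}(p_k)$. The first-order terms vanish identically by the consistency conditions $\sum_j a_{\nu,j}C_{\nu,j} = \sum_j a_{\nu,j}S_{\nu,j} = 0$. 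The diagonal second-order piece collapses, using $\sum_j \tfrac{1}{2}a_{\nu,j}C_{\nu,j}^2 = 1$, into the quantity $\nu^T\bigl[D^2_{xx}\tilde c(x_i, T(x_i, p_1)) - D^2_{xx}\tilde c(x_i, T(x_i, p_2))\bigr]\nu$, whose absolute value is bounded by $L_{\tilde c}\, L_T\, \|p_1 - p_2\|$ from the definition~\eqref{eq:Lc} combined with the Lipschitz continuity of $p \mapsto T(x_i, p)$.

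For the remaining pieces, the mixed and $\nu^\perp\nu^\perp$ second-order contributions carry coefficient sums $\sum_j a_{\nu,j}C_{\nu,j}S_{\nu,j}$ and $\sum_j a_{\nu,j}S_{\nu,j}^2$, which are $\bO(\sqrt h)$ and $\bO(h)$ respectively: by Lemma~\ref{lem:bounds2} we have $a_{\nu,j} = \bO(1/h)$, and Lemma~\ref{lem:neighbors} gives $|C_{\nu,j}| = \bO(\sqrt h)$ while $|S_{\nu,j}| = \bO(h)$. Paired with the uniform Lipschitz-in-$y$ bounds on $\mu^T D^2_{xx}\tilde c(x_i,y)\xi$ coming from the $C^3$ regularity of $\tilde c$, together with $L_T$, these pieces contribute at most $\bO(\sqrt h)\|p_1-p_2\|$. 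The third-order remainders satisfy $|R_{\nu,j}(p_k)| = \bO(h_{\nu,j}^3) = \bO(h^{3/2})$ with a constant depending on $\|\nabla^3_x\tilde c\|_\infty$, so the $a_{\nu,j}$-weighted sum is $\bO(\sqrt h)$; taking the difference in $p$ and using the Lipschitz dependence of $\nabla^3_x\tilde c$ on $y$ composed with $L_T$ yields another $\bO(\sqrt h)\|p_1-p_2\|$ contribution.

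Combining these estimates produces $|g_{1,\nu}(p_1) - g_{1,\nu}(p_2)| \leq \bigl[L_{\tilde c}\, L_T + \bO(\sqrt h)\bigr]\|p_1-p_2\|$, which is the claimed Lipschitz bound. The main obstacle I expect is the careful bookkeeping of the off-diagonal second-order coefficient sums: they rely crucially on the geometric stencil restriction of Lemma~\ref{lem:neighbors} (the $\nu^\perp$-component of each $x_{\nu,j}-x_i$ is as small as $\bO(h)$) combined with the $\bO(1/h)$ blow-up of $a_{\nu,j}$ from Lemma~\ref{lem:bounds2}, in exactly the spirit of the computations in the proofs of those two lemmas. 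A secondary point is ensuring that third-derivative differences in $y$ are controlled, which follows from the $C^3$ regularization~\eqref{eq:modlog} and compactness.
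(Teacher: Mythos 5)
Your argument is correct and follows the same high-level decomposition as the paper's proof: write $g_{1,\nu}(p)$ as the exact second directional derivative $\nu^{T}D^{2}_{xx}\tilde{c}(x_i,T(x_i,p))\nu$ plus a truncation error of size $\bO(\sqrt{h})$, then bound the Lipschitz constant of each piece in $p$ separately, obtaining $L_{\tilde c}L_T$ from the leading term and $\bO(\sqrt{h})$ from the error. Where you go further is in justifying the step the paper simply asserts — that ``the coefficient $C(p)$ arising in the discretization error is at least Lipschitz continuous in $p$'' — by explicitly Taylor-expanding, using the vanishing of the first-order coefficient sums and the normalization $\sum_j \tfrac12 a_{\nu,j}C_{\nu,j}^2=1$, and then bounding the size of the off-diagonal and cubic remainder coefficients via the stencil geometry ($|C_{\nu,j}|=\bO(\sqrt h)$, $|S_{\nu,j}|=\bO(h)$, $a_{\nu,j}=\bO(1/h)$). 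This is a genuinely useful unpacking and your order bookkeeping is right.

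One small point worth tightening: the cubic-remainder step requires $\nabla^3_x\tilde c$ to be Lipschitz in $y$, and you attribute this to ``the $C^3$ regularization~\eqref{eq:modlog} and compactness.'' Compactness plus $C^3$ on its own only gives uniform continuity of the third derivative, not a Lipschitz bound. What actually saves the argument is that the regularized cost is piecewise $C^\infty$ with $C^3$ matching at the gluing surface $\|x-y\|=z_*$, so the third derivative is continuous and piecewise smooth with bounded one-sided fourth derivatives — i.e.\ $\tilde c$ is in fact $C^{3,1}$. That is the precise reason the constant hidden in your $\bO(\sqrt h)\|p_1-p_2\|$ remainder term is finite, and it is the same implicit fact the paper relies on when it asserts $C(p)$ is Lipschitz. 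Also, Lemma~\ref{lem:bounds2} as stated only gives the lower bound $a_{\nu,j}\geq C/h$; the matching upper bound $a_{\nu,j}=\bO(1/h)$ that you invoke does hold (and follows from the same $\det(A)$ estimates used in Lemma~\ref{lem:bounds1}), but it is worth noting you are using an estimate not literally stated in the lemma you cite.
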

\begin{proof}
%
We first recall that $g_{1,\nu}(p)$ involves a finite difference discretization.  By consistency, we have
\[ g_{1,\nu}(p) =  \nu^T\left(D_{xx}^2\tilde{c}(x_i,T(x_i,p))\right)\nu + C(p)\sqrt{h}\]
where the coefficient $C(p)$ arising in the discretization error is at least Lipschitz continuous in $p$.  Then using regularity, we can calculate
\begin{align*}
g_{1,\nu}(p)-g_{1,\nu}(q) &= \nu^TD_{xx}^2\left(\tilde{c}(x_i,T(x_i,p))-\tilde{c}(x_i,T(x_i,q))\right)\nu + (C(p)-C(q))\sqrt{h} \\
  &\leq \max\limits_{\abs{\nu}=1,x,y\in\Sf} \left\| \nabla_y \left(\nu^T D_{xx}^2\tilde{c}(x,y) \nu\right)\right\| \|T(x_i,p)-T(x_i,q)\| + \bO(\sqrt{h})\|p-q\| \\
	&\leq \left(L_{\tilde{c}}L_T + \bO(\sqrt{h})\right)\|p-q\|. \qedhere
\end{align*}
\end{proof}

\begin{lemma}[Lipschitz bound on $g_{2}$]\label{lem:lipg2}
Let $x_i\in\G$ be fixed.  Then for $p\in\Tf_{x_i}$, the function
\[ g_{2}(p) = \frac{\abs{\det D^2_{xy}\tilde{c}(x_i,T(x_i,p))}}{f_2(T(x_i,p))}
 \]
has a Lipschitz constant $L$ satisfying
\[ L \leq \left\|\frac{1}{f_2}\right\|^2\left(\|f_2\| L_{H} + \| H\| L_{f_2}L_T\right). \]
\end{lemma}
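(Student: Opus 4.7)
The plan is to mimic the proof structure of Lemma~\ref{lem:lipg1} but apply a standard quotient-difference manipulation, since $g_2(p)$ is a ratio. The denominator $f_2\circ T$ is uniformly bounded below (by Hypothesis~\ref{hyp:Smooth}(a)), so $1/f_2$ is well defined and bounded; this is what makes a Lipschitz estimate possible. Note that unlike $g_{1,\nu}$, the expression for $g_2$ involves no finite differencing — the mixed Hessian determinant has the explicit closed form $H(p)$ from~\eqref{eq:mixedd2} or~\eqref{eq:mixedLog} — so no $\bO(\sqrt h)$ terms from discretization will appear and the bound can be stated exactly.

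First I would write $g_2(p)=H(p)/f_2(T(x_i,p))$ and apply the classical add-and-subtract trick:
\[
g_2(p)-g_2(q) = \frac{\bigl(H(p)-H(q)\bigr)f_2(T(x_i,q)) + H(q)\bigl(f_2(T(x_i,q))-f_2(T(x_i,p))\bigr)}{f_2(T(x_i,p))\,f_2(T(x_i,q))}.
\]
Taking absolute values and using $1/|f_2|\le\|1/f_2\|_\infty$ in the denominator yields
\[
\abs{g_2(p)-g_2(q)} \leq \left\|\tfrac{1}{f_2}\right\|^2\Bigl(\|f_2\|\,\abs{H(p)-H(q)} + \|H\|\,\abs{f_2(T(x_i,p))-f_2(T(x_i,q))}\Bigr).
\]

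Next I would estimate the two remaining differences. The first is directly $|H(p)-H(q)|\le L_H\|p-q\|$ by definition of $L_H$ (and $H$ is smooth on its domain in light of the explicit formulas~\eqref{eq:mixedd2}--\eqref{eq:mixedLog} together with the regularization from Section~\ref{sec:logCostReg} in the log case, so a finite Lipschitz constant is guaranteed). The second uses the chain of Lipschitz bounds
\[
\abs{f_2(T(x_i,p))-f_2(T(x_i,q))} \leq L_{f_2}\,\|T(x_i,p)-T(x_i,q)\| \leq L_{f_2}L_T\,\|p-q\|,
\]
where $L_T$ is finite thanks to the explicit formulas~\eqref{eq:mapd2} and~\eqref{eq:mapLog} restricted to the bounded range of $p$ implied by the Lipschitz truncation $\|\nabla u\|<R$ built into~\eqref{eq:modifiedPDE}. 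Assembling the two estimates yields exactly the claimed bound $L\le\|1/f_2\|^2\bigl(\|f_2\|L_H+\|H\|L_{f_2}L_T\bigr)$.

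The only subtlety I anticipate is justifying that the individual Lipschitz constants $L_H$, $L_{f_2}$, $L_T$ are indeed finite under Hypothesis~\ref{hyp:Smooth}: $L_{f_2}$ is immediate from $f_2\in C^{1,1}(\Sf)$; $L_T$ follows from smoothness of the explicit $c$-exponential maps on the compact set of admissible gradients $\{\|p\|\le R\}$; and $L_H$ comes from the same compactness argument applied to the explicit expressions for the mixed-Hessian determinant (for the logarithmic cost this is exactly where the regularized cost from~\eqref{eq:modlog} enters — without it, $H(p)$ would blow up as $\|p\|\to\infty$). Beyond that bookkeeping, the argument is a one-line algebraic identity followed by the triangle inequality, so I do not expect any serious obstacle.
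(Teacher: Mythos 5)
Your proof is correct and follows essentially the same route as the paper: the same add-and-subtract quotient decomposition, the same bound on each term via $L_H$, $L_{f_2}$, and $L_T$, and the same assembly into the stated constant. The extra remarks you add about why $L_H$, $L_{f_2}$, $L_T$ are finite under Hypothesis~\ref{hyp:Smooth} and the regularization of the logarithmic cost are sound and helpful context, though the paper leaves them implicit.
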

\begin{proof}
Using the notation $H(p) = \abs{\det D^2_{xy}\tilde{c}(x_i,T(x_i,p))}$, we have
\begin{align*}
g_2(p)-g_2(q) &= \frac{H(p)}{f_2(T(x_i,p))} - \frac{H(q)}{f_2(T(x_i,q))}\\
  &= \frac{f_2(T(x_i,q))H(p)-f_2(T(x_i,p))H(q)}{f_2(T(x_i,p))f_2(T(x_i,q))}\\
	&= \frac{f_2(T(x_i,q))(H(p)-H(q)) + H(q)(f_2(T(x_i,q))-f_1(T(x_i,p)))}{f_2(T(x_i,p))f_2(T(x_i,q))}\\
	&\leq \left\|\frac{1}{f_2}\right\|^2\left(\|f_2\| L_{H} + \| H\| L_{f_2}L_T\right)\|p-q\|. \qedhere
\end{align*}
\end{proof}

\subsection{Lax-Friedrichs approximations}
We now verify that the Lax-Friedrichs type approximations for functions of the gradient defined in~\eqref{eq:discGrad} are both consistent and monotone.

\begin{lemma}[Consistency of functions of gradient]\label{lem:consistencyGrad}
Let $g$ be Lipschitz continuous with Lipschitz constant $L_g$ and $\phi\in C^2$.  Then 
\[\begin{split}
\tilde{g}^\pm(\Dt \phi(x_i)) = & g\left(\sum\limits_{\nu\in\{(1,0),(0,1)\}}\nu\sum\limits_{j=1}^4 b_{\nu,j}\left(\phi(x_{\nu,j})-\phi(x_i)\right)\right)\\ &\mp \epsilon_g\sum\limits_{\nu\in\{(1,0),(0,1)\}}\sum\limits_{j=1}^4a_{\nu,j}\left(\phi(x_{\nu,j})-\phi(x_i)\right).
\end{split}
\]
is a consistent approximation of $g(\nabla\phi)$.
\end{lemma}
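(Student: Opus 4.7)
The plan is to verify consistency by analyzing the two components of $\tilde{g}^\pm$ separately: the nonlinear factor $g(\cdot)$, which should converge to $g(\nabla\phi(x))$, and the Laplacian-like regularization, which should vanish at rate $\bO(\sqrt{h})$.

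First I would establish that the meshfree finite difference $\Dt_\nu\phi(x_i) = \sum_{j=1}^{4} b_{\nu,j}(\phi(x_{\nu,j})-\phi(x_i))$ is a consistent approximation of the directional derivative $\partial\phi(x_i)/\partial\nu$. A second-order Taylor expansion of $\phi(x_{\nu,j})$ about $x_i$, combined with the moment conditions~\eqref{eq:coeffsGrad} satisfied by $b_{\nu,j}$, cancels the leading-order mismatch and leaves a residual of size $\bO\bigl(r^2\max_j|b_{\nu,j}|\bigr)$. Using $r=C\sqrt{h}$ together with the bound $|b_{\nu,j}|=\bO(1/\sqrt{h})$ from Lemma~\ref{lem:bounds1}, this residual is $\bO(\sqrt{h})$. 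Summing over the two canonical directions, the argument of $g$ in the definition of $\tilde{g}^\pm$ equals $\nabla\phi(x_i)+\bO(\sqrt{h})$.

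Next I would dispose of the regularization term. Since $a_{\nu,j}$ was designed to consistently approximate the second directional derivative, the inner sum $\sum_j a_{\nu,j}(\phi(x_{\nu,j})-\phi(x_i))$ converges to $\partial^2\phi(x_i)/\partial\nu^2$ and hence remains bounded. The regularization parameter $\epsilon_g$ is controlled by combining Lemmas~\ref{lem:bounds1} and~\ref{lem:bounds2}:
\[
\epsilon_g = \max \frac{L_g|b_{\nu,j}|}{a_{\nu,j}} \leq L_g\, \frac{\bO(h^{-1/2})}{\bO(h^{-1})} = \bO(\sqrt{h}),
\]
so the entire correction $\epsilon_g \sum_\nu\sum_j a_{\nu,j}(\phi(x_{\nu,j})-\phi(x_i))$ is $\bO(\sqrt{h})\to 0$.

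To close, I would invoke the Lipschitz continuity of $g$ to obtain $g(\nabla\phi(x_i)+\bO(\sqrt{h})) \to g(\nabla\phi(x))$ as $h\to 0$ and $x_i\to x$, combine with the vanishing of the regularization, and conclude $\tilde{g}^\pm(\Dt\phi(x_i))\to g(\nabla\phi(x))$. The main obstacle is the apparent blow-up of the meshfree gradient weights at rate $h^{-1/2}$; this is precisely tamed by the matching $h^{-1}$ lower bound on $a_{\nu,j}$, so the Lax--Friedrichs regularization remains a genuinely higher-order correction rather than a dominant term, which is exactly what the choice~\eqref{eq:epsilon} of $\epsilon_g$ is engineered to guarantee.
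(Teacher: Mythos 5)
Your proposal follows essentially the same route as the paper's own proof: decompose $\tilde{g}^\pm$ into the nonlinear factor and the regularization term, use the moment conditions to verify consistency of the gradient approximation with an $\bO(\sqrt{h})$ residual, and combine Lemmas~\ref{lem:bounds1} and~\ref{lem:bounds2} to obtain $\epsilon_g = \bO(\sqrt{h})$ so that the Laplacian regularization vanishes. The paper states the convergence of the $b$- and $a$-weighted sums as following ``by construction'' where you spell out the Taylor expansion, and it leaves the Lipschitz-continuity step implicit where you make it explicit, but the core argument is the same.
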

\begin{proof}
We note that by construction, we have
\[ \lim\limits_{h\to0}\sum\limits_{\nu\in\{(1,0),(0,1)\}}\nu\sum\limits_{j=1}^4 b_{\nu,j}\left(\phi(x_{\nu,j})-\phi(x_i)\right) = \nabla\phi(x_i) \]
and 
\[ \lim\limits_{h\to0} \sum\limits_{\nu\in\{(1,0),(0,1)\}}\sum\limits_{j=1}^4a_{\nu,j}\left(\phi(x_{\nu,j})-\phi(x_i)\right) = \Delta\phi(x_i). \]

Using Lemmas~\ref{lem:bounds2}-\ref{lem:bounds1}, we can also bound $\epsilon_g$ via
\begin{align*} 
\epsilon_g &=\max\left\{\frac{L_g\abs{b_{\nu,j}}}{a_{\nu,j}} \mid j\in\{1,2,3,4\}, \, \nu \in \{(0,1),(1,0)\}\right\}\\
  &\leq \frac{L_g(C_b/\sqrt{h})}{C_a/h}\\
	&= C \sqrt{h}
\end{align*}
where the constant $C$ is independent of $h$.

Combining these results, we obtain
\[ \lim\limits_{h\to0}\tilde{g}^\pm(\Dt \phi(x_i)) = g(\nabla\phi(x_i)), \]
with a truncation error of $\bO(\sqrt{h})$.
\end{proof}

\begin{lemma}[Monotonicity of functions of the gradient]\label{lem:monotoneGrad}
Let $g$ be Lipschitz continuous with Lipschitz constant $L_g$ and $\phi\in C^2$.  Then the schemes $\tilde{g}^+(\Dt\phi(x_i))$ and  $\tilde{g}^-(\Dt\phi(x_i))$ are monotone and negative monotone respectively.
\end{lemma}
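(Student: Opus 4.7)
The plan is to verify, directly from the definition, that $\tilde{g}^+$ is a non-decreasing function and $\tilde{g}^-$ a non-increasing function of each difference $D_{\nu,j} = \phi(x_i)-\phi(x_{\nu,j})$ (holding $\phi(x_i)$ fixed). Writing the scheme abstractly as
\[ \tilde{g}^{\pm}(\Dt\phi(x_i)) = g(G(\phi)) \mp \epsilon_g L(\phi), \]
where $G(\phi) = \sum_{\nu}\nu \sum_j b_{\nu,j}(\phi(x_{\nu,j})-\phi(x_i))$ and $L(\phi) = \sum_{\nu}\sum_j a_{\nu,j}(\phi(x_{\nu,j})-\phi(x_i))$, the argument reduces to showing that the Laplacian-type regularization $\epsilon_g L$ dominates the Lipschitz response of the nonlinear term $g(G(\cdot))$ in exactly the right sign.

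First I would fix two grid functions $\phi,\psi$ sharing the same value at $x_i$ and with $\delta_{\nu,j} := \phi(x_{\nu,j})-\psi(x_{\nu,j}) \geq 0$, then compute $\tilde{g}^{\pm}(\phi) - \tilde{g}^{\pm}(\psi)$. The contribution of the regularization is exactly $\mp \epsilon_g \sum_{\nu,j} a_{\nu,j}\delta_{\nu,j}$. For the $g(G(\cdot))$ contribution I would apply the scalar Lipschitz bound $|g(p)-g(q)|\leq L_g\|p-q\|$ together with the elementary estimate
\[ \|G(\phi)-G(\psi)\| \leq \sum_{\nu}\sum_j |b_{\nu,j}|\delta_{\nu,j}, \]
which follows from the orthogonality of $\nu \in \{(1,0),(0,1)\}$ (each coordinate of $G(\phi)-G(\psi)$ involves only one $\nu$, and $\ell^2 \leq \ell^1$ on the two-component vector of coordinate sums). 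Combining these two bounds yields
\[ \tilde{g}^{+}(\phi)-\tilde{g}^{+}(\psi) \leq \sum_{\nu,j}\bigl(L_g|b_{\nu,j}| - \epsilon_g a_{\nu,j}\bigr)\delta_{\nu,j}, \]
and a mirrored lower bound with the opposite sign for $\tilde{g}^{-}$.

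By the very definition of $\epsilon_g$ in~\eqref{eq:epsilon}, each factor $L_g|b_{\nu,j}|-\epsilon_g a_{\nu,j}$ is non-positive, and since $a_{\nu,j} > 0$ and $\delta_{\nu,j}\geq 0$ each term in the sum has the correct sign. This gives $\tilde{g}^{+}(\phi) \leq \tilde{g}^{+}(\psi)$ and $\tilde{g}^{-}(\phi) \geq \tilde{g}^{-}(\psi)$, which translate directly into monotonicity and negative monotonicity in the differences $D_{\nu,j}$. There is no real obstacle to overcome: the hard work has already been done in choosing $\epsilon_g$ to satisfy $\epsilon_g a_{\nu,j} \geq L_g |b_{\nu,j}|$, and the only thing requiring care is the sign bookkeeping between the $\pm$ signs in the scheme and the two notions of monotonicity. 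I note that Lemmas~\ref{lem:bounds2}–\ref{lem:bounds1} are not needed for the monotonicity proof itself; they only ensure that $\epsilon_g$ is finite and decays appropriately, which is a consistency issue handled separately in Lemma~\ref{lem:consistencyGrad}.
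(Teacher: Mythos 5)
Your proof is correct and uses the same key mechanism as the paper: apply the Lipschitz bound $|g(G(\phi))-g(G(\psi))| \le L_g\|G(\phi)-G(\psi)\|$ and note that the choice of $\epsilon_g$ in~\eqref{eq:epsilon} guarantees $\epsilon_g a_{\nu,j} \ge L_g|b_{\nu,j}|$ so the Laplacian regularization dominates term by term. The only cosmetic difference is that you perturb all coordinates simultaneously (via $\phi$ vs.\ $\psi$) whereas the paper perturbs a single difference $d_k$ at a time; these are equivalent, and your observation that Lemmas~\ref{lem:bounds2}--\ref{lem:bounds1} are irrelevant to monotonicity matches the paper's structure.
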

\begin{proof}
We verify monotonicity of $\tilde{g}^+(\Dt\phi(x_i))$; the other part of the argument is identical.

Denoting by $d_j$ the differences $\phi(x_i)-\phi(x_{\nu,j})$ allows us to express the scheme more compactly as
\[ G(d) = g\left(-\sum\limits_{\nu\in\{(1,0),(0,1)\}}\nu\sum\limits_{j=1}^4 b_{\nu,j}d_j\right)+ \epsilon_g\sum\limits_{\nu\in\{(1,0),(0,1)\}}\sum\limits_{j=1}^4a_{\nu,j}d_j. \]
We now introduce a perturbation $\delta>0$ into the argument $d_k$ to obtain
\begin{align*}
G(d + \delta \hat{d}_k)-G(d) &\geq -L_g\abs{b_{\nu,k}}\delta + \epsilon_ga_{\nu,k}\delta\\
  &\geq 0
\end{align*}
since $\epsilon_g \geq L\abs{b_{\nu,k}}/a_{\nu,k}$.  Therefore the scheme is monotone.
\end{proof}

\subsection{Consistency and monotonicity}

We now establish consistency and monotonicity of the approximation~\eqref{eq:discOTPDE}, which in turn establishes the convergence result (Theorem~\ref{thm:convergenceScheme}).

\begin{lemma}[Consistency]\label{lem:consistent}
Under the assumptions of Hypotheses~\ref{hyp:Smooth},\ref{hyp:grid}, the scheme $F^h$ defined by~\eqref{eq:discOTPDE} is consistent with the PDE~\eqref{eq:OTPDE} on the space of $C^2$ functions satisfying the constraint~\eqref{eq:cconvex}.
\end{lemma}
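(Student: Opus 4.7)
The plan is to verify the $\limsup/\liminf$ conditions of Definition~\ref{def:consistency} by expressing $F^h$ in terms of its building blocks and invoking the consistency results already proved. Since the right-hand side of~\eqref{eq:discOTPDE} has no explicit dependence on $u(x_i)$, the $\xi$-perturbation in Definition~\ref{def:consistency} is inactive. Fix a $C^2$ test function $\phi$ satisfying~\eqref{eq:cconvex}, a base point $x_0\in\Sf$, and a sequence $x_i\to x_0$; it suffices to establish
\[
F^h(x_i,\phi(x_i)-\phi(\cdot)) \;\longrightarrow\; -\det\left(D^2\phi(x_0)+A(x_0,\nabla\phi(x_0))\right)+H(x_0,\nabla\phi(x_0))
\]
as $h\to 0$, since the right-hand side is continuous in $x_0$ and the $\limsup/\liminf$ bounds then collapse to equality.

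The first ingredient is consistency of the directional second-derivative stencil: by construction of the coefficients in~\eqref{eq:coeffs}, $\Dt_{\nu\nu}\phi(x_i)\to\partial^2\phi(x_0)/\partial\nu^2$. Lemma~\ref{lem:consistencyGrad}, combined with the Lipschitz bounds of Lemmas~\ref{lem:lipg1}--\ref{lem:lipg2}, gives
\[
\tilde g_{1,\nu}^-(\Dt\phi(x_i))\to \nu^T A(x_0,\nabla\phi(x_0))\nu,\qquad f_1(x_i)\,\tilde g_2^+(x_i,\Dt\phi(x_i))\to H(x_0,\nabla\phi(x_0)),
\]
each with an $\bO(\sqrt{h})$ truncation error that is uniform in the direction $\nu$ (the Lipschitz constants in~\eqref{eq:L1}--\eqref{eq:L2} do not depend on $\nu$). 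Adding the two contributions in each bracket inside the product yields convergence to $\phi_{\nu_j\nu_j}(x_0)+\nu_j^T A(x_0,\nabla\phi(x_0))\nu_j$.

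Next I would pass from the discrete minimum over the finite set $\tilde V$ to the continuous minimum over all orthonormal bases. Since $d\theta=\bO(\sqrt{h})\to 0$, $\tilde V$ is asymptotically dense in the full set of orthonormal bases of $\R^2$, and the uniform-in-$\nu$ truncation bounds above justify exchanging the discrete and continuous minima. Applying the variational formula~\eqref{eq:detPlusUse} identifies the limit of the discrete variational expression with ${\det}^+(D^2\phi(x_0)+A(x_0,\nabla\phi(x_0)))$. The $c$-convexity hypothesis~\eqref{eq:cconvex} on $\phi$ forces $D^2\phi(x_0)+A(x_0,\nabla\phi(x_0))\geq 0$, so each bracket is nonnegative, the max-truncation is inactive, and ${\det}^+$ agrees with $\det$. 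Combining with the limit of $f_1\tilde g_2^+$ reproduces the target expression, as required.

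The main obstacle I anticipate is the commutation of the discrete minimum over $\tilde V$ with the pointwise limit in $h$: this requires uniform control, in the direction variable, of both the second-derivative stencil error and the Lax--Friedrichs gradient approximation. The needed uniformity is precisely what Lemmas~\ref{lem:bounds2},~\ref{lem:bounds1},~\ref{lem:lipg1}, and~\ref{lem:lipg2} provide, with constants independent of $\nu$. A minor side issue is that the finite-difference coefficients are built in geodesic normal coordinates on the tangent plane; however, as recorded in~\autoref{sec:PDEFormulation}, these coordinates introduce no spurious first-order distortion of the Hessian, so the consistency analysis reduces to the planar setting already handled by the building-block lemmas above.
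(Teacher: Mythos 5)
Your proof is correct and follows essentially the same route as the paper's: both express the scheme through the variational formula for $\det^+$, invoke the stencil consistency for second directional derivatives, invoke Lemma~\ref{lem:consistencyGrad} for the gradient-dependent terms, and pass from the finite direction set $\tilde V$ to the continuous minimum over $V$ using the fact that $d\theta = \bO(\sqrt h)$. The only difference is presentational: you make explicit a few points that the paper treats implicitly, namely that the $\xi$-perturbation is inactive because the scheme depends only on differences, that the truncation bounds are uniform in $\nu$ (needed to exchange the discrete min with the $h\to 0$ limit), and that the $c$-convexity hypothesis on $\phi$ makes the $\max\{\cdot,0\}$ truncation inactive so $\det^+$ reduces to $\det$.
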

\begin{proof}
Let $\phi\in C^2$ satisfy the constraint~\eqref{eq:cconvex}.  Then the determinant in~\eqref{eq:OTPDE} can be equivalently expressed as in~\eqref{eq:detPDE}:
\[{\det}^+(D^2\phi(x) + A(x,\nabla \phi(x))) = \min_{\nu_1, \nu_2 \in V} \prod_{j=1}^2 \max \left\{ \frac{\partial^2 \phi(x)}{\partial \nu_j^2} + \left.\frac{\partial^2c(x,y)}{\partial\nu_j^2}\right|_{y = T(x,\nabla \phi(x))}, 0 \right\}.\]

By design and by Lemma~\ref{lem:consistencyGrad}, the components of the PDE are approximated consistently (with truncation error $\bO(\sqrt{h})$).  That is,
\[\lim\limits_{h\to0}\left\{\Dt_{\nu_j\nu_j}\phi(x_i)+\tilde{g}_{1,\nu_j}^-(\Dt \phi(x_i))\right\} = \frac{\partial^2 \phi(x)}{\partial \nu_j^2} + \left.\frac{\partial^2c(x,y)}{\partial\nu_j^2}\right|_{y = T(x,\nabla \phi(x))}\]
and
\[
\lim\limits_{h\to0}\tilde{g}_2^+\left(x_i,\Dt \phi(x_i)\right) = \frac{\abs{\det D^2_{xy}c(x_i,T(x_i,\nabla\phi(x_i)))}}{f_2(T(x_i,\nabla\phi(x_i)))}.
\]

We recall that the maximum and minimum operators are continuous, $f_1\in C^1$, and $\tilde{V}$ is a consistent approximation of the set $V$ with angular resolution $\bO(d\theta) = \bO(\sqrt{h})$.  Thus the combinations of these operators in the scheme $F^h$ satisfy
\begin{align*} \lim\limits_{h\to0,x_i\to x}-\min\limits_{(\nu_1,\nu_2)\in\tilde{V}}\prod\limits_{j=1}^2 \max\left\{ \Dt_{\nu_j\nu_j}\phi(x_i)+\tilde{g}_{1,\nu_j}^-(\Dt \phi(x_i)), 0\right\}+ f_1(x_i) \tilde{g}_2^+\left(x_i,\Dt \phi(x_i)\right) \\= -{\det}^+(D^2\phi(x) + A(x,\nabla \phi(x))) + {\abs{\det D^2_{xy}c(x,T(x,\nabla\phi(x)))}}\frac{f_1(x)}{f_2(T(x,\nabla\phi(x)))}, \end{align*}
which establishes consistency.
\end{proof}

\begin{corollary}[Truncation error]\label{cor:truncation}
Under the assumptions of Hypotheses~\ref{hyp:Smooth},\ref{hyp:grid}, the scheme $F^h$ defined by~\eqref{eq:discOTPDE} has local truncation error $\tau(h) = \mathcal{O}(\sqrt{h})$.
\end{corollary}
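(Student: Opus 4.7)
The plan is to revisit the consistency argument of Lemma~\ref{lem:consistent} and track, at each step, the explicit order of the error term rather than just taking a limit. I would organize the work by isolating the three ingredients of~\eqref{eq:discOTPDE}: the second-directional-derivative stencils $\Dt_{\nu\nu}$, the Lax--Friedrichs approximations $\tilde g_{1,\nu}^-$ and $\tilde g_2^+$ of the gradient terms, and the replacement of the continuum set $V$ of orthonormal frames by the finite subset $\tilde V$ in~\eqref{eq:directions}.

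First I would verify that $\Dt_{\nu\nu}\phi(x_i) = \partial^2\phi/\partial\nu^2 + \bO(\sqrt h)$ for smooth $\phi$. Taylor expanding $\phi(x_{\nu,j})$ about $x_i$, the four defining conditions~\eqref{eq:coeffs} kill the constant, first-order, and $\sin^2\theta$ second-order terms exactly; the leading error comes from the cubic remainder, which is bounded by $\sum_j |a_{\nu,j}|\, h_{\nu,j}^3$. Using Lemma~\ref{lem:bounds2} ($a_{\nu,j} = \bO(1/h)$) together with $h_{\nu,j}\leq r = C\sqrt{h}$ gives an error of order $h^{-1}\cdot h^{3/2} = \bO(\sqrt h)$. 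Next, for the gradient functions, Lemma~\ref{lem:consistencyGrad} (read together with Lemmas~\ref{lem:bounds2},~\ref{lem:bounds1}) already shows that $\tilde g_{1,\nu}^-$ and $\tilde g_2^+$ agree with $g_{1,\nu}(\nabla\phi)$ and $g_2(\nabla\phi)$ up to $\bO(\sqrt h)$: the centered first-difference contributes $\bO(r)=\bO(\sqrt h)$, while the viscosity term $\epsilon_g\Delta^h\phi$ is $\bO(\epsilon_g)=\bO(\sqrt h)$ by the bound $\epsilon_g\leq C\sqrt h$ obtained in that lemma.

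Third, I would control the angular discretization. For any $(\nu_1,\nu_2)\in V$ there exists $(\tilde\nu_1,\tilde\nu_2)\in \tilde V$ with $|\nu_j-\tilde\nu_j|\leq d\theta = \bO(\sqrt h)$, and since the entries of $D^2\phi + A$ are bounded and smooth, the quadratic form values $\nu_j^T(D^2\phi+A)\nu_j$ change by $\bO(\sqrt h)$. Because $\max\{\cdot,0\}$ is $1$-Lipschitz, and on the relevant bounded interval the product $(a,b)\mapsto ab$ and the minimum are also Lipschitz, passing from $V$ to $\tilde V$ introduces at most an additional $\bO(\sqrt h)$ error.

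Finally, I would combine these three estimates. Since $f_1\in C^{1,1}$ is bounded, and the outer combination of max, product, and min operators is Lipschitz on the (uniformly bounded) range of the quantities involved for $\phi\in C^2$, the overall discrepancy between $F^h(x,\phi(x)-\phi(\cdot))$ and $F(x,\nabla\phi(x),D^2\phi(x))$ is controlled by the sum of the three contributions, yielding $\tau(h)=\bO(\sqrt h)$ as required. The main technical obstacle is the third step: verifying that despite the ``min of products of maxes'' structure, the error composes additively rather than getting amplified; this reduces to showing uniform boundedness of all inner quantities and invoking Lipschitz continuity of each outer operation.
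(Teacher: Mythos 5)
Your proposal is correct and takes essentially the same approach as the paper, which leaves the corollary unproved because the $\mathcal{O}(\sqrt{h})$ rates are already tracked explicitly in the proofs of Lemma~\ref{lem:consistencyGrad} (where $\epsilon_g \leq C\sqrt{h}$ is established) and Lemma~\ref{lem:consistent} (where the component errors and the angular resolution $d\theta = \mathcal{O}(\sqrt{h})$ are all noted). One minor imprecision in your first step: the three linear constraints defining $a_{\nu,j}$ annihilate the first-order Taylor terms and normalize the $\cos^2\theta$ coefficient, but do not kill the $\sin^2\theta$ and $\sin\theta\cos\theta$ second-order terms exactly; those are instead controlled because $\theta_{\nu,j} = \mathcal{O}(d\theta) = \mathcal{O}(\sqrt{h})$, so the cross term contributes $\mathcal{O}(\sqrt{h})$ and the $\sin^2$ term $\mathcal{O}(h)$ — the same final order you obtain, but the leading contribution is not solely the cubic remainder.
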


\begin{lemma}[Monotonicity]\label{lem:monotone}
Under the assumptions of Hypotheses~\ref{hyp:Smooth},\ref{hyp:grid}, the scheme $F^h$ defined by~\eqref{eq:discOTPDE} is monotone.
\end{lemma}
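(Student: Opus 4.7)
The plan is to verify that $F^h$ is non-decreasing in each of the discrete differences $u(x_i)-u(x_j)$ by tracing monotonicity through the compositional structure of~\eqref{eq:discOTPDE}. First, by Lemma~\ref{lem:bounds2} we have $a_{\nu,j}\geq 0$, so the discrete second derivative
\[
\Dt_{\nu\nu}u(x_i) = \sum_{j=1}^{4} a_{\nu,j}\bigl(u(x_{\nu,j})-u(x_i)\bigr)
\]
is a \emph{negative monotone} (non-increasing) function of each difference $u(x_i)-u(x_{\nu,j})$. For the Lipschitz functions $g_{1,\nu}$ from~\eqref{eq:gradFunctions1}, Lemma~\ref{lem:lipg1} together with the choice~\eqref{eq:L1} guarantees that, for sufficiently small $h$, the constant $L_{g_1}$ dominates the true Lipschitz constant of $g_{1,\nu}$; hence Lemma~\ref{lem:monotoneGrad} applies, showing $\tilde{g}_{1,\nu}^-(\Dt u(x_i))$ is negative monotone. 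Since sums of negative monotone functions are negative monotone, each ``uniaxial'' quantity
\[
\Dt_{\nu_j\nu_j}u(x_i) + \tilde{g}_{1,\nu_j}^-(\Dt u(x_i))
\]
is negative monotone in the differences for every $\nu_j \in \tilde{V}$.

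Next, I would propagate this property through the remaining operations in the first term of~\eqref{eq:discOTPDE}. The map $t\mapsto\max\{t,0\}$ is non-decreasing, so composing with it preserves negative monotonicity and, crucially, produces a non-negative output. The product of two non-negative, negative monotone functions is itself non-negative and negative monotone, since any increase in a difference can only decrease each non-negative factor, and hence can only decrease their product. Taking the minimum over the finite set $\tilde{V}$ of direction pairs preserves negative monotonicity, and finally the outer negation turns negative monotone into monotone. For the second term of~\eqref{eq:discOTPDE}, Lemma~\ref{lem:lipg2} together with~\eqref{eq:L2} ensures $L_{g_2}$ bounds the Lipschitz constant of $g_2$, so Lemma~\ref{lem:monotoneGrad} shows that $\tilde{g}_2^+(x_i,\Dt u(x_i))$ is monotone; since $f_1(x_i)\geq 0$ by Hypothesis~\ref{hyp:Smooth}, multiplication by $f_1(x_i)$ preserves monotonicity. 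Adding the two monotone pieces yields monotonicity of $F^h$.

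The only subtle step is the product stage: without the $\max\{\cdot,0\}$ truncation built into ${\det}^+$ from~\eqref{eq:detPlusUse}, a product of negative-monotone but sign-indefinite quantities would not in general be negative monotone. The role of the ${\det}^+$ construction from~\autoref{sec:var} is precisely to supply the non-negativity that makes this step go through, and it is this observation — rather than any new estimate — that is the main conceptual content of the argument; the rest is routine propagation of monotonicity through non-decreasing operations and non-negative linear combinations with the already-established coefficient signs from Lemma~\ref{lem:bounds2}.
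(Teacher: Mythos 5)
Your proposal is correct and follows essentially the same route as the paper: negative monotonicity of $\Dt_{\nu\nu}u$ and $\tilde g^-_{1,\nu}$ (via Lemma~\ref{lem:monotoneGrad}), preservation under $\max\{\cdot,0\}$ and products of non-negative quantities, the min over $\tilde V$, the sign flip, and the non-negativity of $f_1$ together with monotonicity of $\tilde g_2^+$. Your write-up is actually a bit more explicit than the paper's about why the ${\det}^+$ truncation is what makes the product step legitimate, and about invoking Lemmas~\ref{lem:lipg1}--\ref{lem:lipg2} with~\eqref{eq:L1}--\eqref{eq:L2} to guarantee (for small $h$) that the chosen constants $L_{g_1}, L_{g_2}$ really do dominate the Lipschitz constants of $g_{1,\nu}$ and $g_2$, a hypothesis the paper implicitly uses when it appeals to Lemma~\ref{lem:monotoneGrad}.
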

\begin{proof}
By construction (and see Lemma~\ref{lem:monotoneGrad}), the schemes for $\Dt_{\nu\nu}\phi$ and $\tilde{g}^-_{1,\nu}(\Dt u)$ are negative monotone.  Addition and the maximum function preserve this so that
\[ \max\left\{\Dt_{\nu\nu}u(x_i) + \tilde{g}^-_{1,\nu}(\Dt u(x_i)), 0\right\} \]
is also negative monotone for any $\nu\in\R^2$.  Since this is also non-negative, products of these terms preserve the negative monotonicity so that
\[ \min\limits_{(\nu_1,\nu_2)\in\tilde{V}}\prod\limits_{j=1}^2 \max\left\{ \Dt_{\nu_j\nu_j}u(x_i)+\tilde{g}_{1,\nu_j}^-(\Dt u(x_i)), 0\right\} \]
is also negative monotone.

We recall also that $f_1$ is non-negative and $\tilde{g}^+_2$ is monotone (Lemma~\ref{lem:monotoneGrad}).  Therefore the full scheme
\[-\min\limits_{(\nu_1,\nu_2)\in\tilde{V}}\prod\limits_{j=1}^2 \max\left\{ \Dt_{\nu_j\nu_j}u(x_i)+\tilde{g}_{1,\nu_j}^-(\Dt u(x_i)), 0\right\}+ f_1(x_i) \tilde{g}_2^+\left(x_i,\Dt u(x_i)\right)\]
is monotone.
\end{proof}

\subsection{Extensions to nonsmooth problems}\label{sec:nonsmooth}
The results of~\cite{Loeper_OTonSphere} ensure existence of weak solutions to the optimal transport PDE~\eqref{eq:OTPDE} in the relaxed setting where $f_1, f_2 \in L^1$ with $f_2$ bounded away from zero and $f_1$ bounded away from infinity.  
In~\cite{HT_OTonSphere}, the authors of this article showed that these solutions can be computed for the squared geodesic cost as in Theorem~\ref{thm:convergence} if the scheme $F^h$ is additionally required to underestimate when applied to the true solution. This result has now been extended to the regularized logarithmic cost via the remark~\ref{rmk:nonsmoothlog}.

Naturally underestimating schemes can be constructed for \MA type equations in some cases~\cite{BenamouDuval,HamfeldtBVP2}.  An alternate approach is to utilize a scheme of the form
\[ \tilde{F}^h(x,u(x)-u(\cdot)) = F^h(x,u(x)-u(\cdot)) - h^\alpha \]
for a sufficiently small $\alpha>0$.  This preserves the consistency and monotonicity of the original scheme, while decreasing the value of the scheme and forcing it to be negative when applied to the true solution of the PDE.

In addition, definition of consistency (Definition~\ref{def:consistency}) in terms of upper and lower semicontinuous envelopes of the PDE operator allows us to accommodate discontinuous data $f_1, f_2$ as in~\cite{HamfeldtBVP2}. This does not require any change in the way $f_1(x)$ is handled.  However, functions $f_2 \notin C^{0,1}$ must be carefully regularized to preserve consistency and monotonicity since it takes as its argument terms $T(x,\nabla u(x))$ that involve gradients.

The approach we propose is to introduce a discrete version of the target density function 
\bq\label{eq:f2h} f_2^h(y) =  (K_{h^{1/4}} * f_2)(y)\eq
where $K_{h^{1/4}}$ is a mollifier that ensures that the Lipschitz constant of $f_2^h$ satisfies
\[ L_{f_2^h} \leq h^{-1/4}. \]

From here, we use the same discretization of $f_2(T(x,\nabla u(x))$ introduced in~\eqref{eq:discGrad}.  
We note that in this case (following Lemma~\ref{lem:consistencyGrad}), the regularization parameter will satisfy
\[ \epsilon_{g_2^h} = \bO\left(L_{f_2^h}\sqrt{h}\right) = \bO(h^{1/4}). \]
Since this parameter converges to zero as $h\to0$, the resulting scheme will still be consistent in the sense of Definition~\ref{def:consistency}.  The monotonicity result (Lemma~\ref{lem:monotoneGrad}) is unchanged.

\section{Computational Results}\label{sec:implementation}
\subsection{Structured and Unstructured Grids}\label{grids}

The scheme we have built works well on both structured and unstructured grids, provided that they satisfy the mild conditions of Hypothesis~\ref{hyp:grid}. By structured we mean that there exists a deterministic way of building the grid. Likewise, unstructured here means there is a stochastic element in the construction of the grid.

Here, we describe four types of grids that satisfy these hypotheses and that we make use of in practice: the cube grid (structured), the random grid (fully unstructured), the latitude-longitude grid (unstructured), and the layered grid (structured).

The structured cube grid is constructed as follows. First, a grid of evenly-spaced points is generated on the faces of a cube which contains the sphere. Then, the points on such a grid on the cube are projected onto the sphere. See Figure~\ref{fig:cubegrid} for an example of the resulting grid.
%

The semi-unstructured latitude-longitude grid is constructed as follows. We begin with a structured grid composed of points equally spaced in both latitude $\theta$ and longitude $\phi$. However, this produces a highly over-resolved grid near the poles, which does not satisfy the required structure conditions.  In order to get rid of this redundancy, we stochastically remove grid points within a geodesic distance $\mathcal{O}(h)$ of both poles. That is, for each grid point $x_i = (\theta_i, \phi_i)$, we compute $\xi_i = \sin \theta_i$ and generate a value using the random variable $\Xi \sim \text{Unif}([0, 1])$. If $\Xi > \xi_i$, then we remove the point $(\theta_i, \phi_i)$ from the grid. The removal of these points creates a new unstructured grid that almost surely satisfies Hypothesis~\ref{hyp:grid}.  See Figure~\ref{fig:llgrid} for an example of such a grid.
%

To construct the structured layered grid, we take an integer $n$ and $\epsilon = \mathcal{O}(h)$ and define the rows: $\text{row}_j = \epsilon + j\dfrac{\pi - 2 \epsilon}{n}$. For each row, we have $\text{col}_{kj} = j \phi_0 + k\dfrac{2\pi}{\floor{n*\sin \theta_n}}$, where $\phi_0 = \dfrac{1+\sqrt{5}}{2}$, which is the golden ratio. This creates a nice spread of points inspired by the seed packing of sunflowers. Then, we introduce the grid points $x_{jk} = (\theta, \phi) = (\text{row}_j, \text{col}_{jk})$ for $j = 1, \dots, n$ and $k = 1, \dots, \text{floor}\{n*\sin \theta_n\}$. This grid, by construction, will satisfy Hypothesis~\ref{hyp:grid}.  See Figure~\ref{fig:layeredgrid}.

\begin{figure}[htp]
	\subfigure[]{\includegraphics[width=0.45\textwidth]{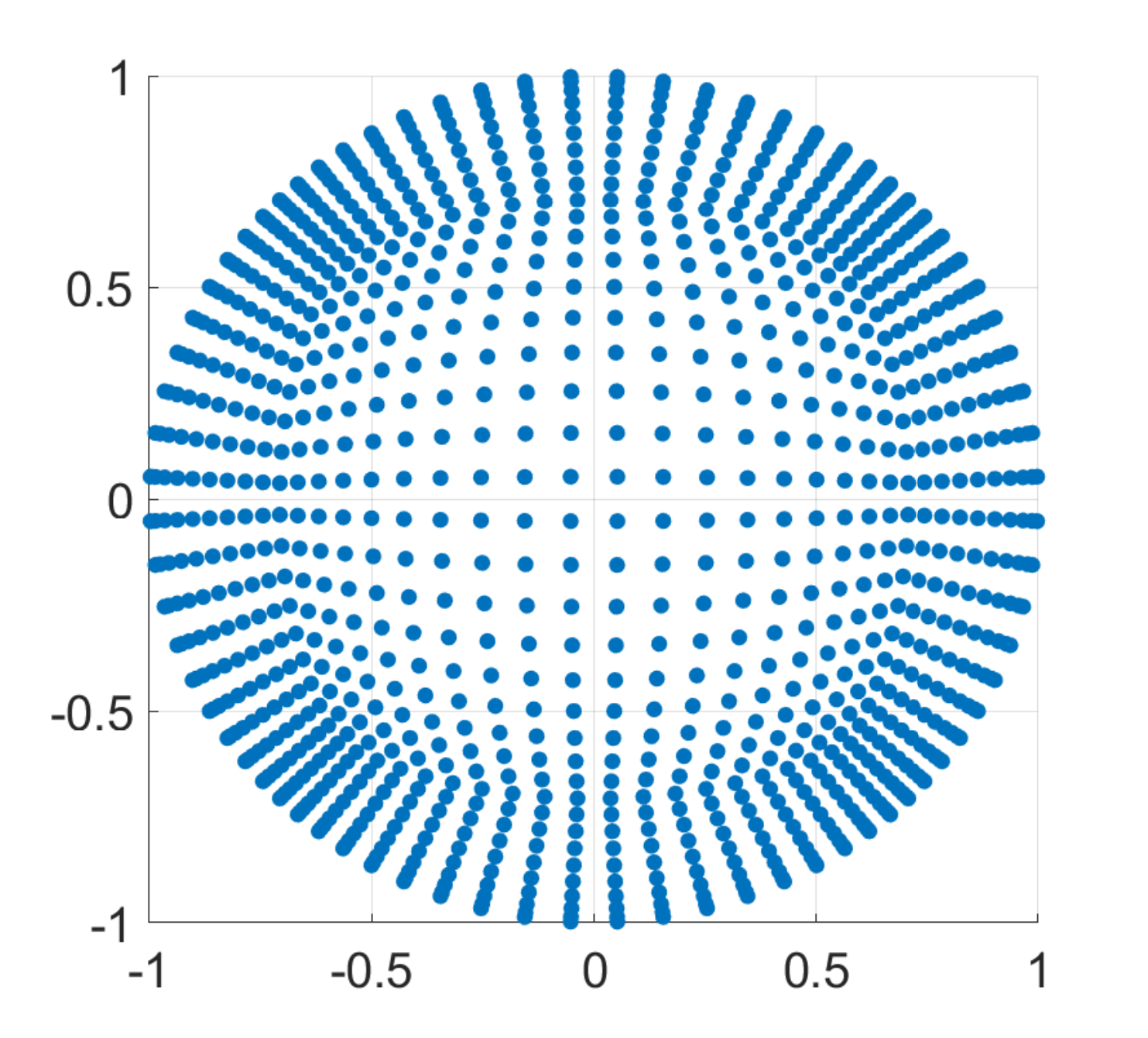}\label{fig:cubegrid}}
	\subfigure[]{\includegraphics[width=0.45\textwidth]{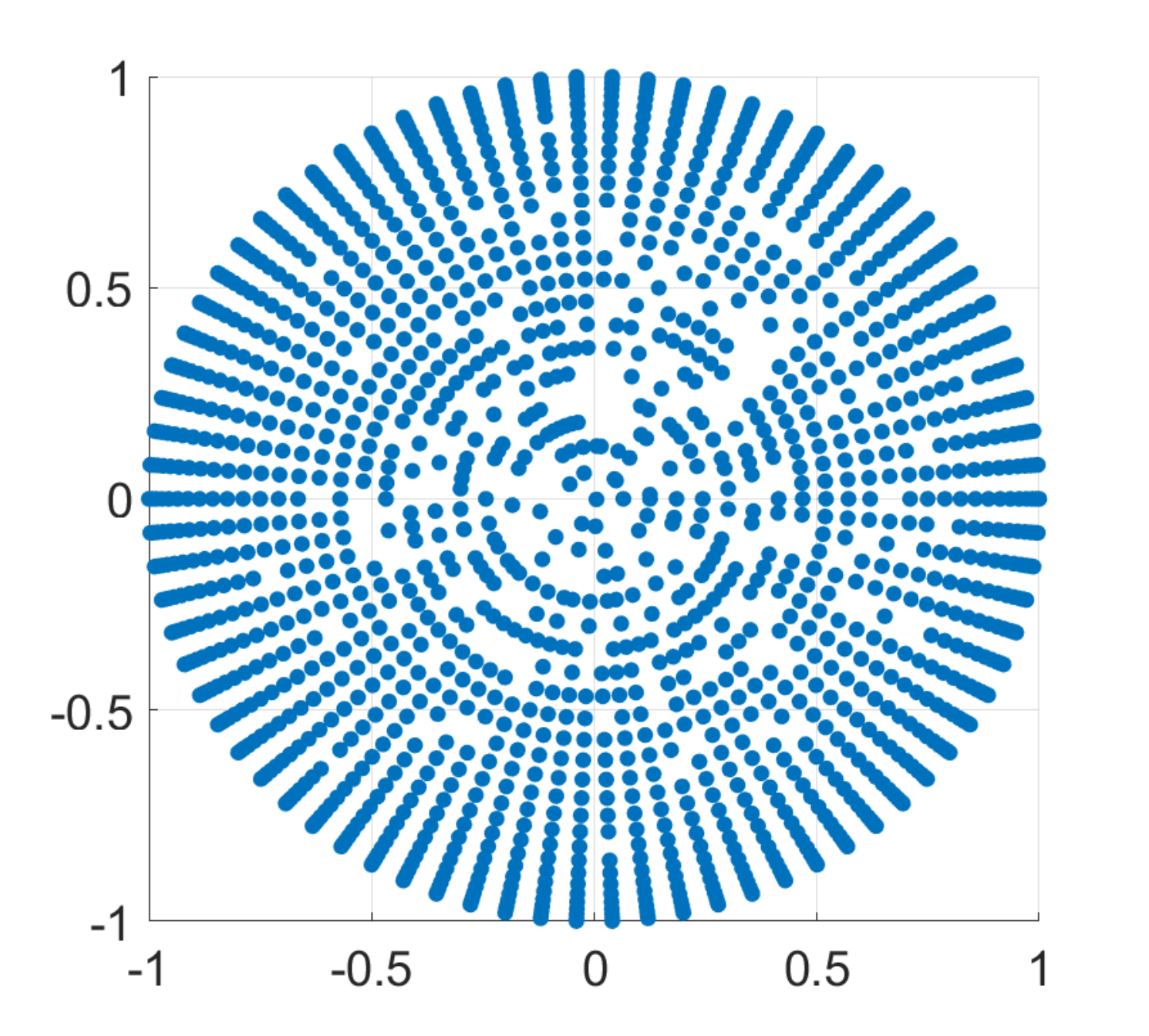}\label{fig:llgrid}}
	\subfigure[]{\includegraphics[width=0.45\textwidth]{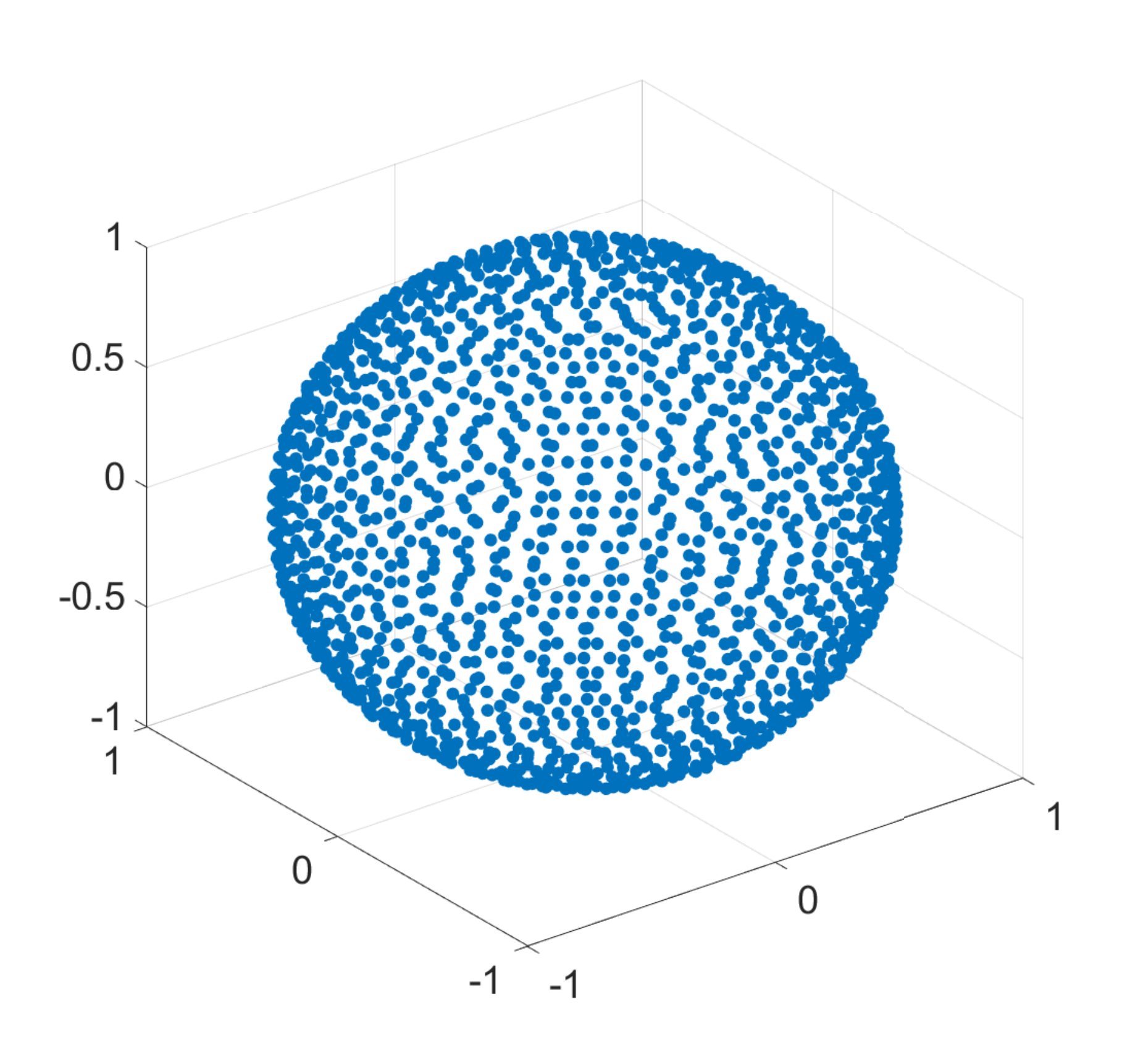}\label{fig:layeredgrid}}
	\subfigure[]{\includegraphics[width=0.45\textwidth]{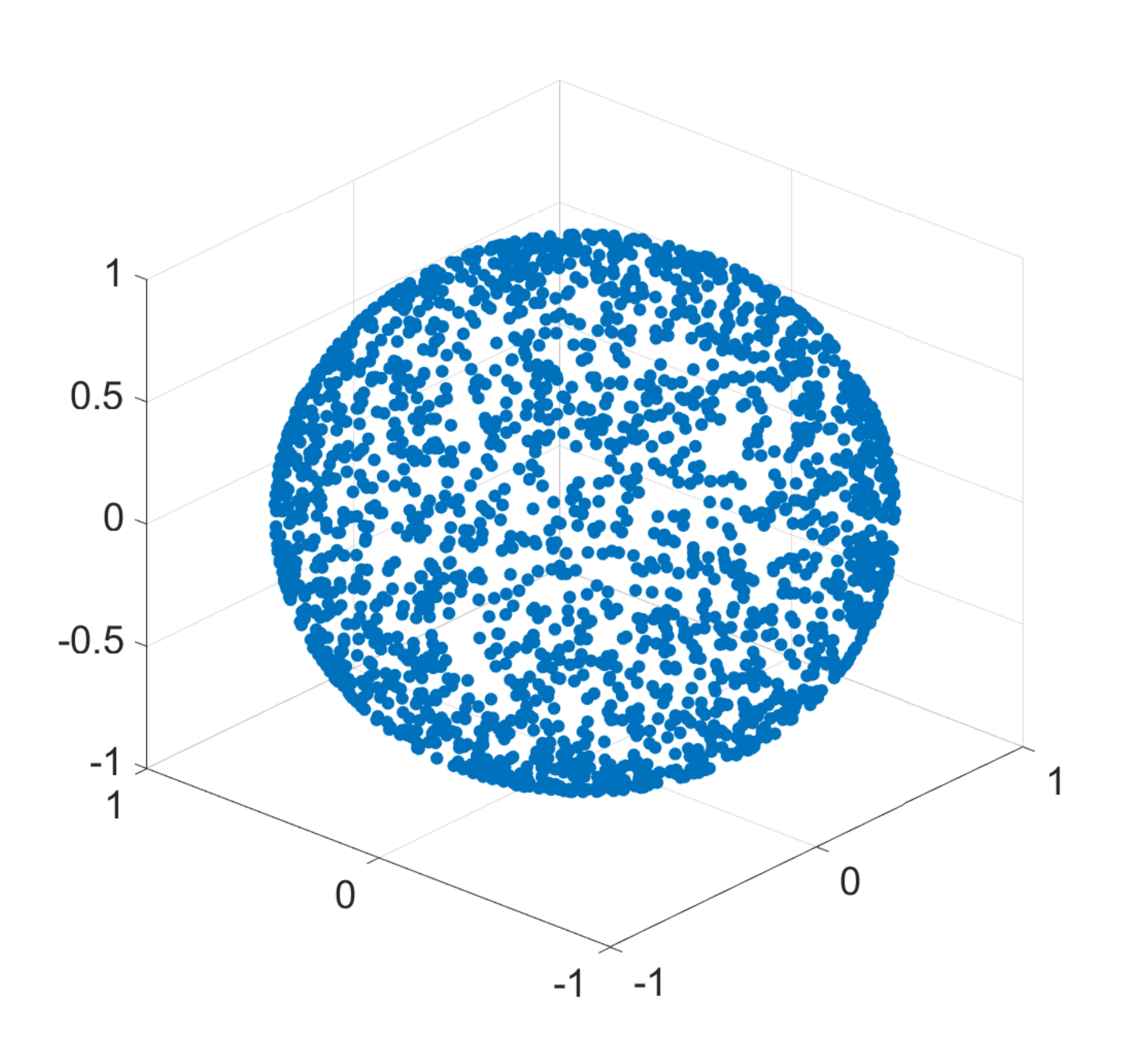}\label{fig:randomgrid}}
	\caption{Top down views of a \subref{fig:cubegrid}~cube grid and \subref{fig:llgrid} latitude-longitude grid.  Side views of a \subref{fig:layeredgrid}~layered grid and a \subref{fig:randomgrid}~randomgrid.}\label{fig:grids}
\end{figure}


Finally, we consider a fully unstructured random grid. After defining the set
\[
R = \{(x, y) \in [0, \pi] \times [0, 1]: y \leq \sin(x) \}
\]
and the projection
\[
P(x, y) = x,
\]
we sample $\Phi \sim \text{Unif}([0, 2\pi])$ and $\tilde{\Theta} \sim \text{Unif}(R)$, then define $\Theta = P(\tilde{\Theta})$. 

We take as grid points the random variables \[(X,Y,Z) = (\sin \Theta \cos \Phi, \sin \Theta \sin \Phi, \cos \Theta)\], which satisfy the conditions that $(X, Y, Z) \sim \text{Unif}(\mathbb{S}^2)$. The resulting grid almost surely satisfies Hypothesis~\ref{hyp:grid}.  See Figure~\ref{fig:randomgrid} for an example of such a grid.


%


\subsection{Recovering constant solutions}

For both cost functions, in the case where $f_1 = f_2$, the resulting solution will be a constant function ($u(x)=0$). Figure~\ref{fig:constants} shows the solutions obtained for both the squared geodesic and logarithmic cost functions.  Importantly, these are effectively constant to within a tolerance less than the expected consistency error of the method.

\begin{figure}[htp]
	\subfigure[]{\includegraphics[width=0.45\textwidth]{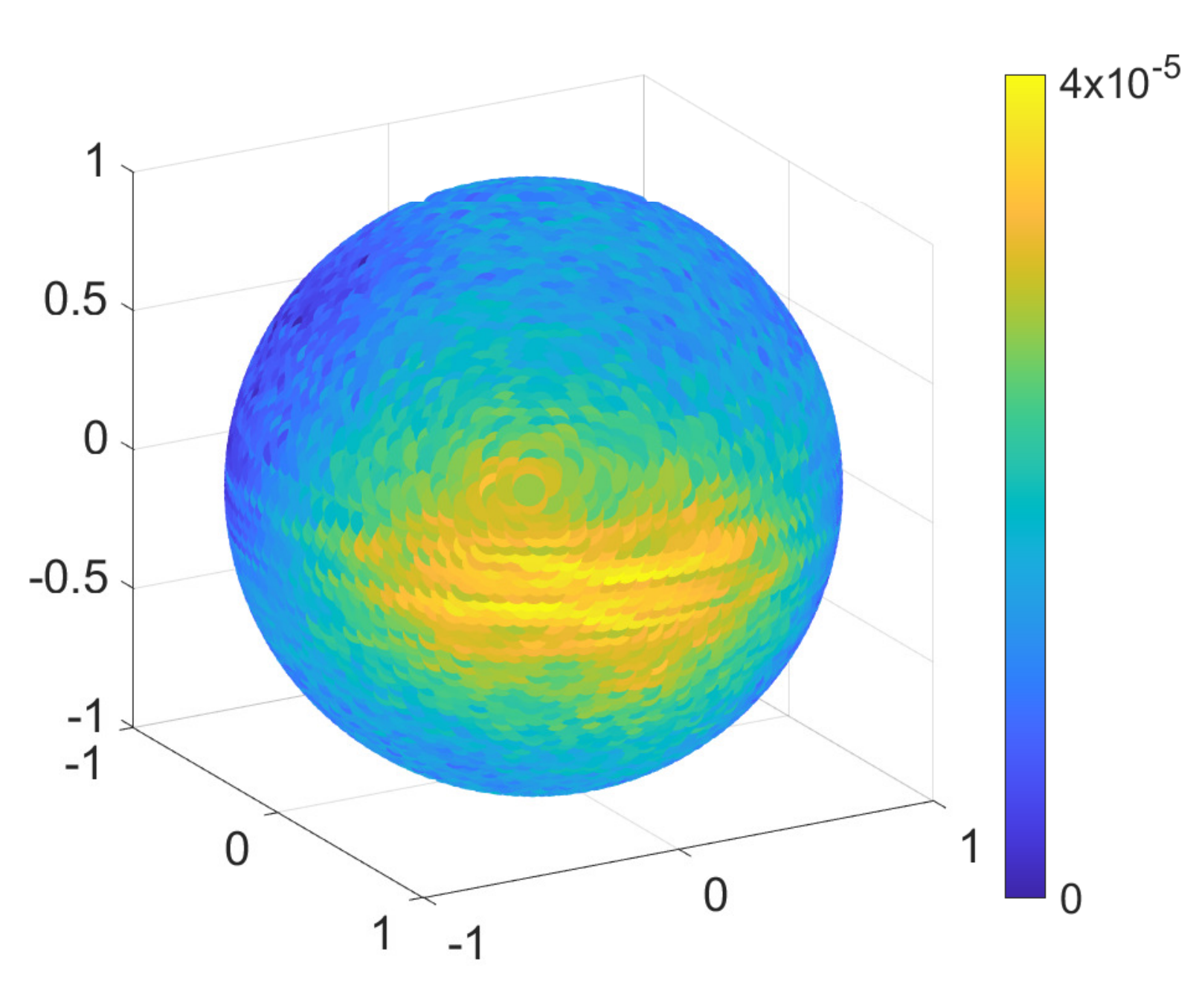}\label{fig:constantd2}}
	\subfigure[]{\includegraphics[width=0.45\textwidth]{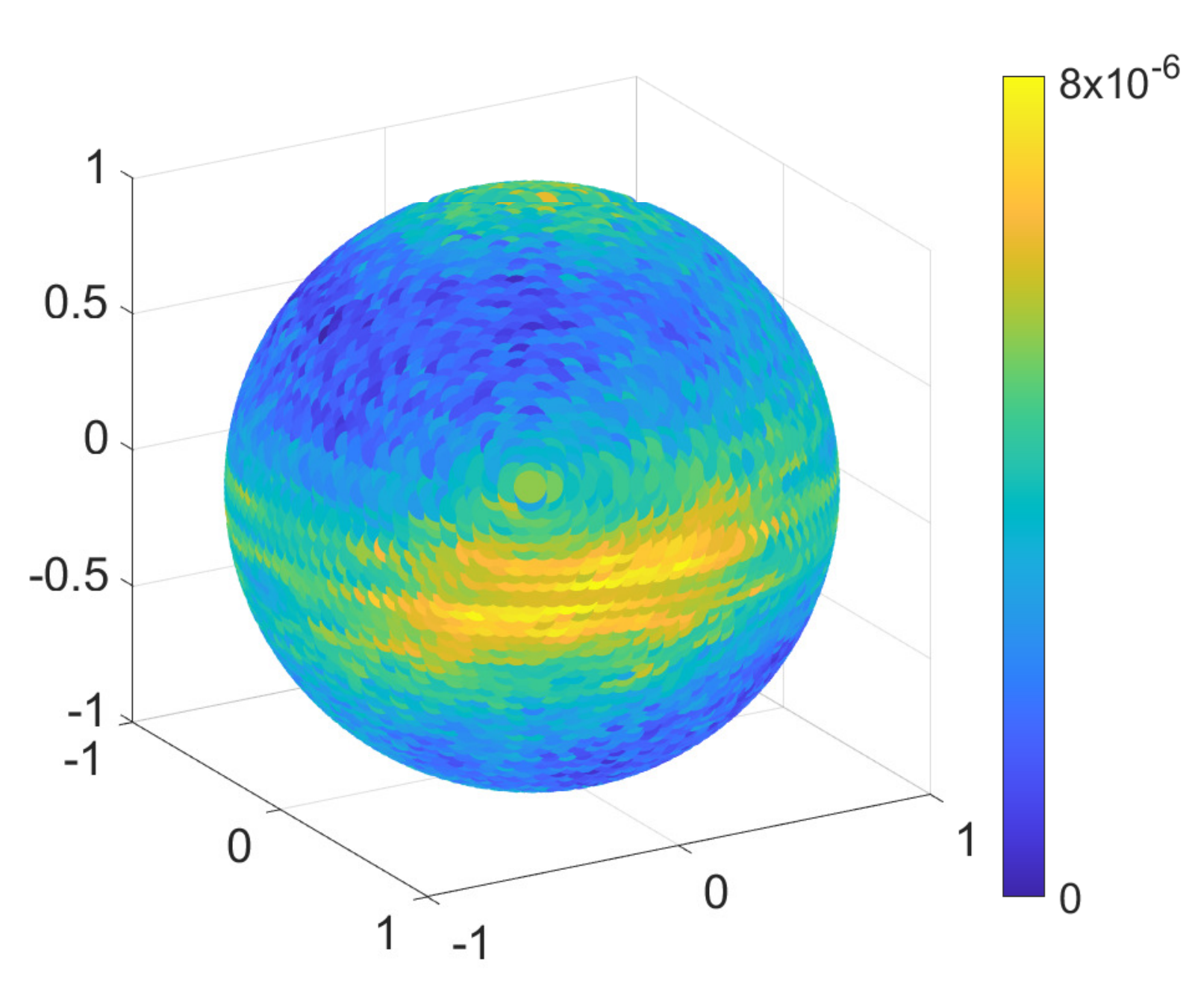}\label{fig:constantlog}}
	\caption{Solutions obtained for the \subref{fig:constantd2}~squared geodesic cost and \subref{fig:constantlog}~logarithmic cost when $f_1=f_2$ on a layered grid consisting of $N=7722$ points.}\label{fig:constants}
\end{figure}

%
%

\subsection{Small Perturbation}
Here we demonstrate with computations for the squared geodesic cost what happens when the target mass density $f_2$ is obtained through a slight perturbation (a rotation through an angle $\theta$) of the source mass density $f_1$.
In particular, we choose the density functions
\begin{equation}
\begin{cases}
f_1(x,y,z) = \frac{1}{4\pi - 4} \left( 1 - 0.5 \cos \left( \frac{\pi}{2} x \right) \right) \\
f_2(x,y,z) = \frac{1}{4\pi - 4} \left( 1 - 0.5 \cos \left( \frac{\pi}{2} (x \cos \theta + y \sin \theta) \right) \right)
\end{cases}
\end{equation}

This problem has the flavor of a translation.  In the Euclidean setting, translations are exact solutions of the optimal transport problem. However, it is not the case that rotations are exact solutions on the sphere.  See Figure~\ref{fig:gradrot} for a top-view of the computed gradient map.  In particular, we observe that the bulk of the mass does undergo a clockwise rotation.  However, in order to conserve mass and regularity, there is also a backwards ``flow'' observed in areas of low density (top and bottom of the figure).


\begin{figure}[htp]
	\includegraphics[width=0.8\textwidth]{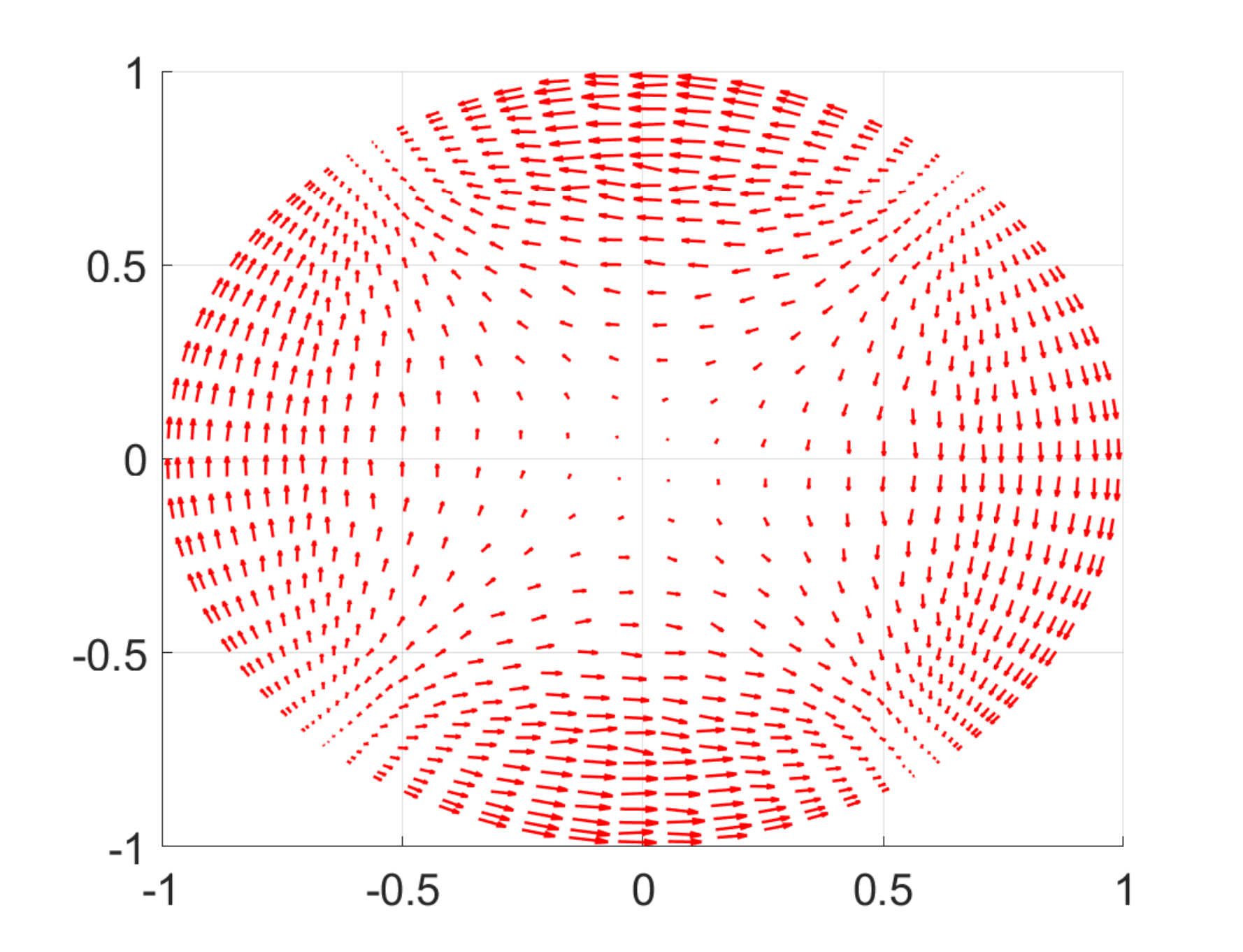}
	\caption{Top view of the local gradient obtained numerically when $f_2$ is obtained from $f_1$ through a small rotation. The solution was computed using a cube grid with $N=2168$ points. }
	\label{fig:gradrot}
\end{figure}


\subsection{Comparing structured and unstructured grids}
 To show the robustness of our generalized finite difference scheme with respect to the structure of the grid, we next show a side-by-side comparison of solutions obtained using a fully structured layered grid and a fully unstructured random grid. We choose a non-smooth source density $f_1$ and constant target density $f_2$:
\bq\label{eq:compare}
\begin{cases}
f_1(\theta, \phi) =  (1-\epsilon) \frac{1}{5.8735} \left(\theta-\frac{\pi}{2}\right)^2 + \frac{\epsilon}{4\pi}\\
f_2(\theta, \phi) = \frac{1}{4\pi}
\end{cases}
\eq
for $\epsilon=0.1$. See Figure~\ref{fig:compare} for the computed solutions, which are identical to within a tolerance on the order of the computed residual.

\begin{figure}[htp]
	\subfigure[]{\includegraphics[height=5.2cm]{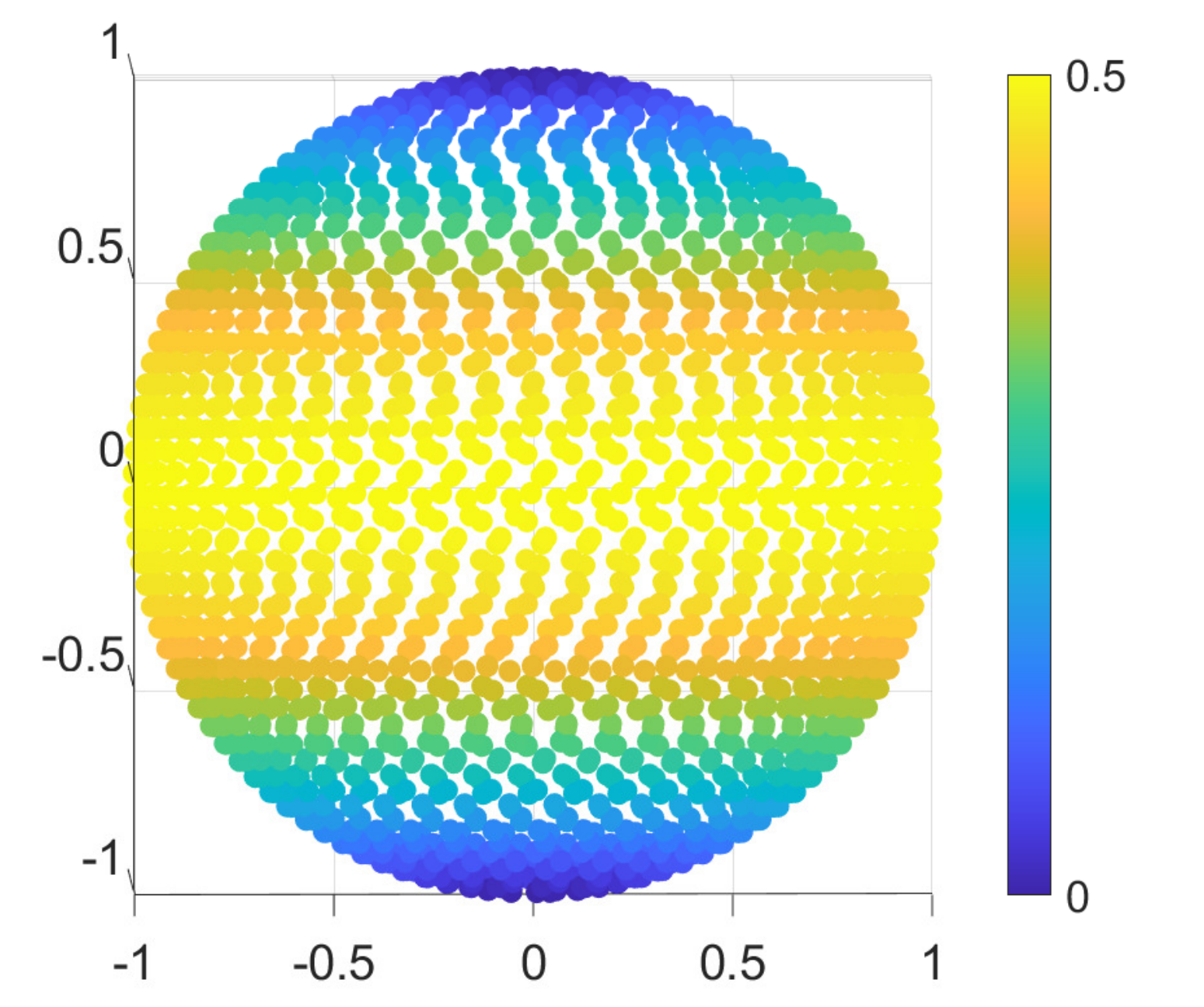}\label{fig:structured}}
	\subfigure[]{\includegraphics[height=5.2cm]{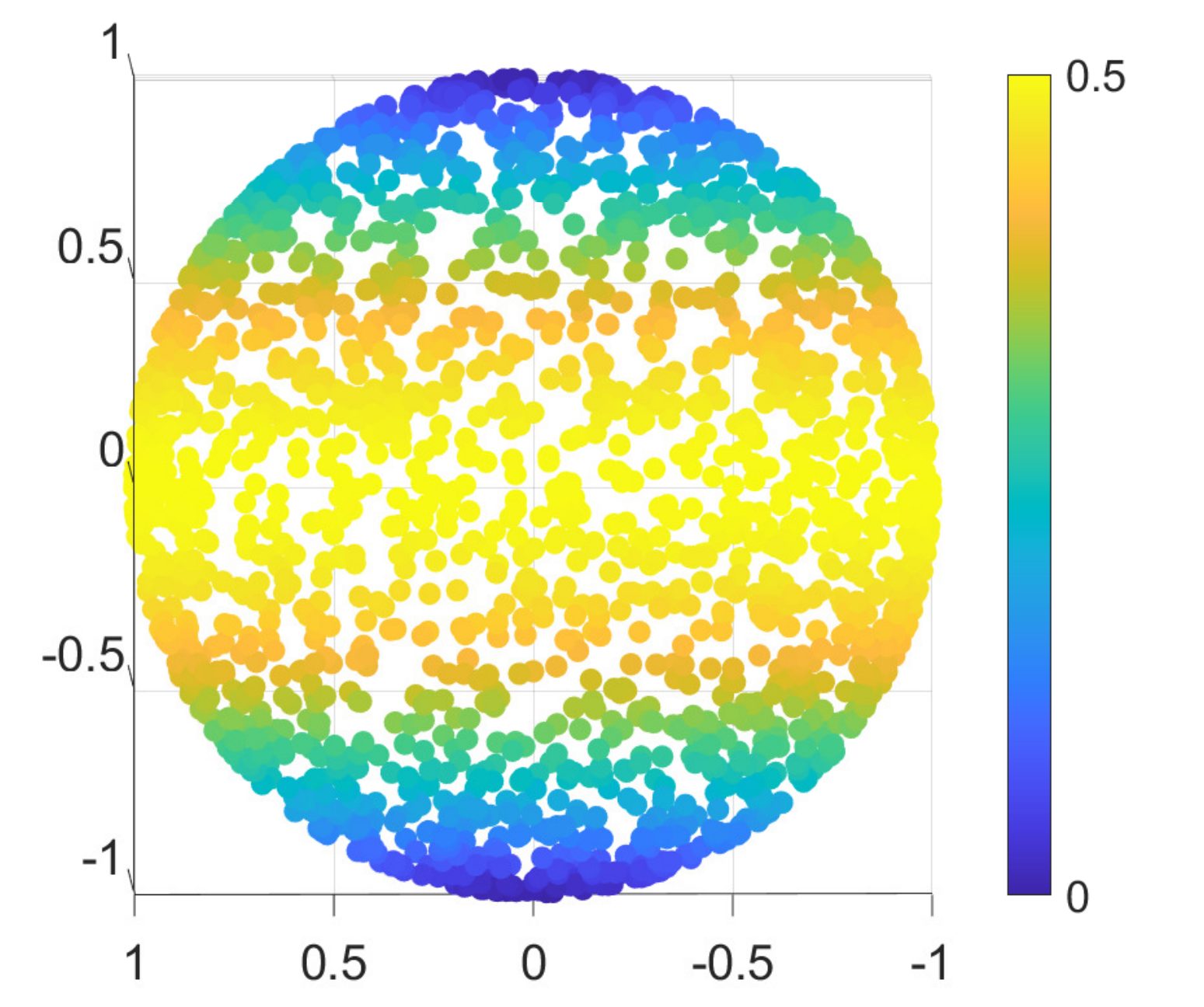}\label{fig:unstructured}}
	\caption{Solution $u$ obtained for densities~\eqref{eq:compare} on $N=2006$ point \subref{fig:structured}~layered and \subref{fig:unstructured}~random grids.}
	\label{fig:compare}
\end{figure}

\subsection{Mesh generation}
In the context of mesh generation, one of the most important aspects of the squared geodesic cost is the lack of tangling in mesh restructuring.
To demonstrate that our method preserves this critical property, we construct an explicit mesh and preserve the edge connections under the computed transport map $T$.  We begin with a structured cube grid, where edges are obtained from the edges of the original cube grid that was projected onto the sphere.  We select density functions with the goal of producing a transport map $T$ that will concentrate mesh points around the equator.
%
%
%
\begin{equation}\label{eq:meshgen}
\begin{cases}
f_1(\theta, \phi) = \frac{1-\epsilon}{5.8735} \left(\theta-\frac{\pi}{2} \right)^2 + \frac{\epsilon}{4\pi} \\
f_2(\theta, \phi) = \frac{1}{4\pi}
\end{cases}
\end{equation}
where $\epsilon=0.1$.

The resulting mesh restructuring is pictured in Figure~\ref{fig:movingmesh}.  As desired, the final mesh concentrates grid points around the equator without introducing any tangling into the mesh.

\begin{figure}[htp]
	\subfigure[]{\includegraphics[width=\textwidth,clip=true,trim={0.6in 1.2in 0.6in 1.2in}]{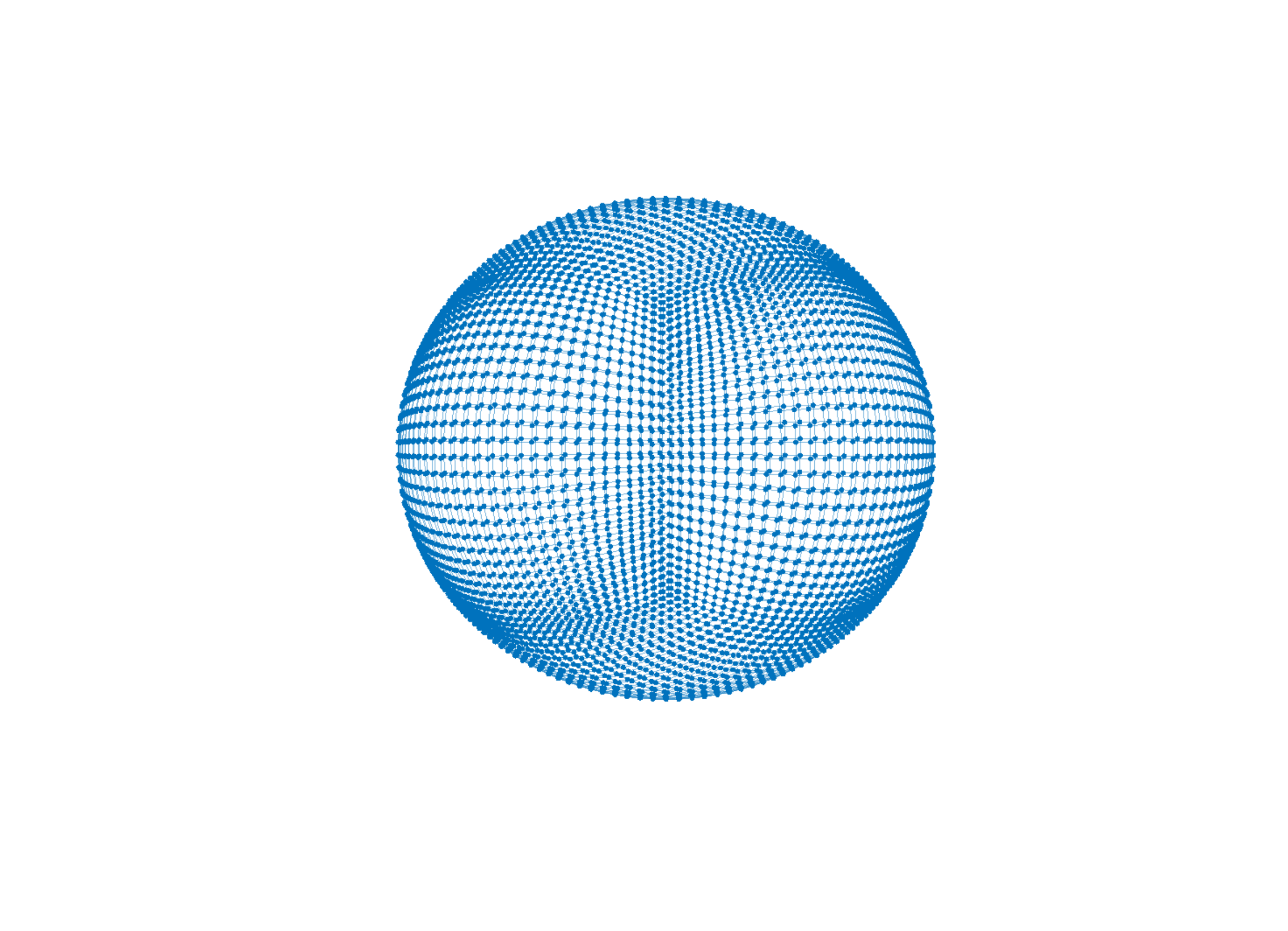}\label{fig:mesh1}}
	\subfigure[]{\includegraphics[width=\textwidth,clip=true,trim={0.5in 1in 0.5in 1in}]{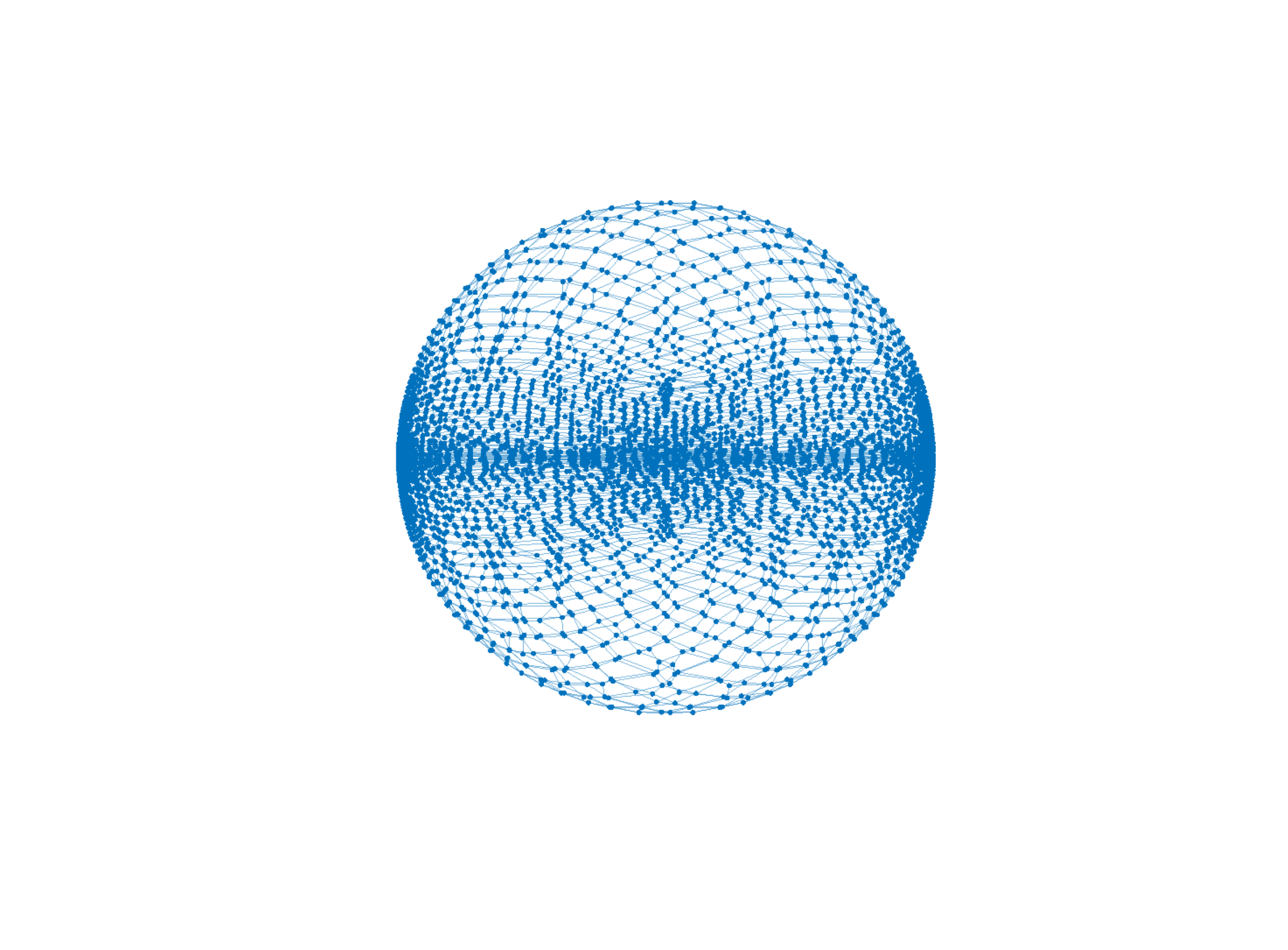}\label{fig:mesh2}}
	\caption{\subref{fig:mesh1}~The original $N=5048$ point cube mesh and \subref{fig:mesh2}~the mesh obtained by transporting through the computed optimal transport map.}
	\label{fig:movingmesh}
\end{figure}

\subsection{Logarithmic cost with full support}
For the reflector antenna problem, previous available methods have successfully performed computations when the mass densities $f_1$ and $f_2$ have support on a subset of $\Sf$; see~\cite{RomijnSphere}. This does not allow the reflector antenna problem to be solved for cases where the source light intensity is, for example, omnidirectional. Here we demonstrate the computation for the case where both the mass densities have full support. 
%
%
%

Since applications in optics can often involve highly non-smooth densities, we take as our source density the image of a world map, which is mapped to a constant target density.  Despite the potential singularity in the cost function and the non-smoothness of the problem data and solution, our method handles this problem with ease.  See Figure~\ref{fig:globe} for a visualization of the density functions and computed (non-smooth) solution.  In particular, the outlines of Africa, the Middle East, and Asia are easily discerned in the solution.
%
%

\begin{figure}[htp]
	\subfigure[]{\includegraphics[width=0.8\textwidth]{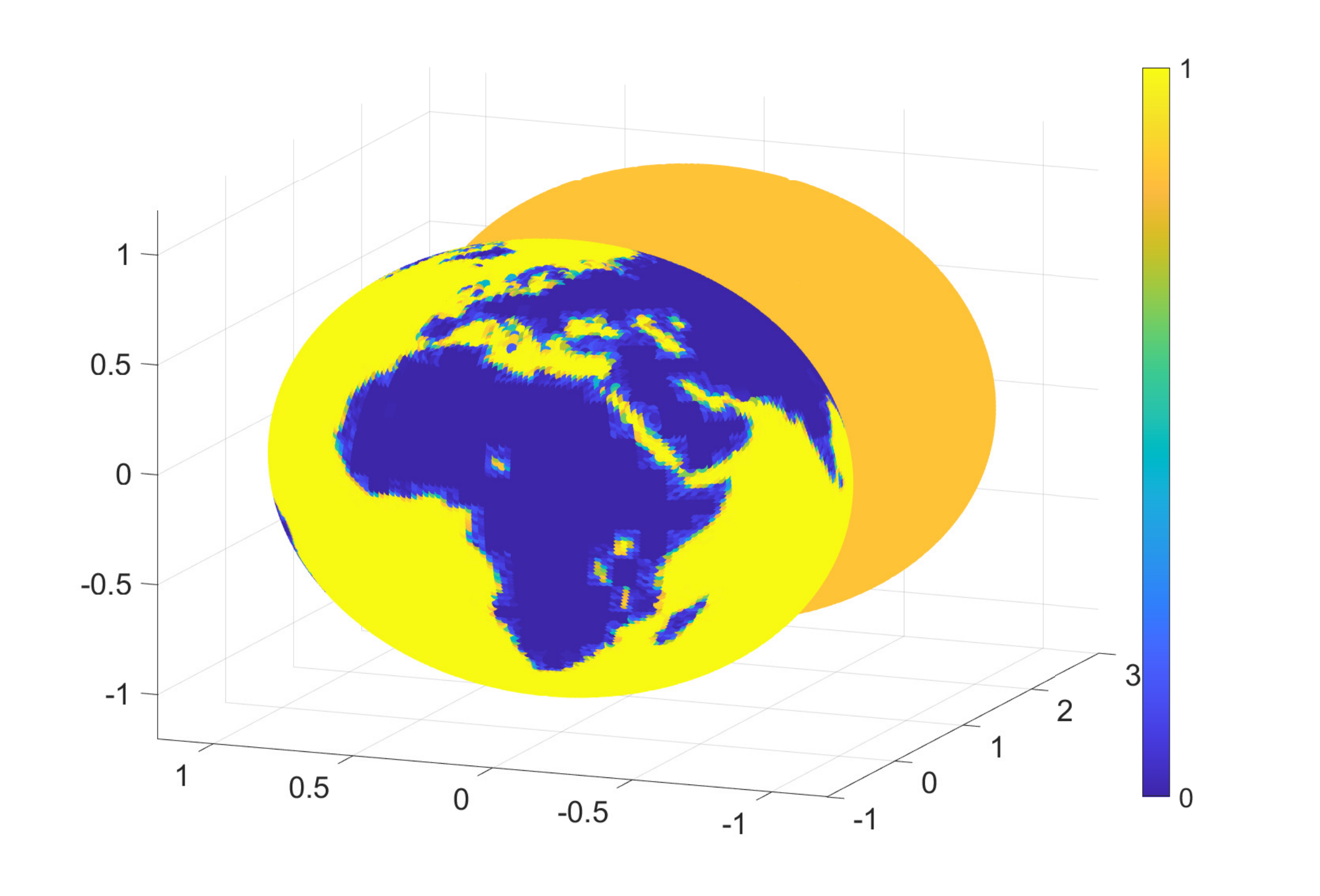}\label{fig:globeDensities}}
	\subfigure[]{\includegraphics[width=0.8\textwidth]{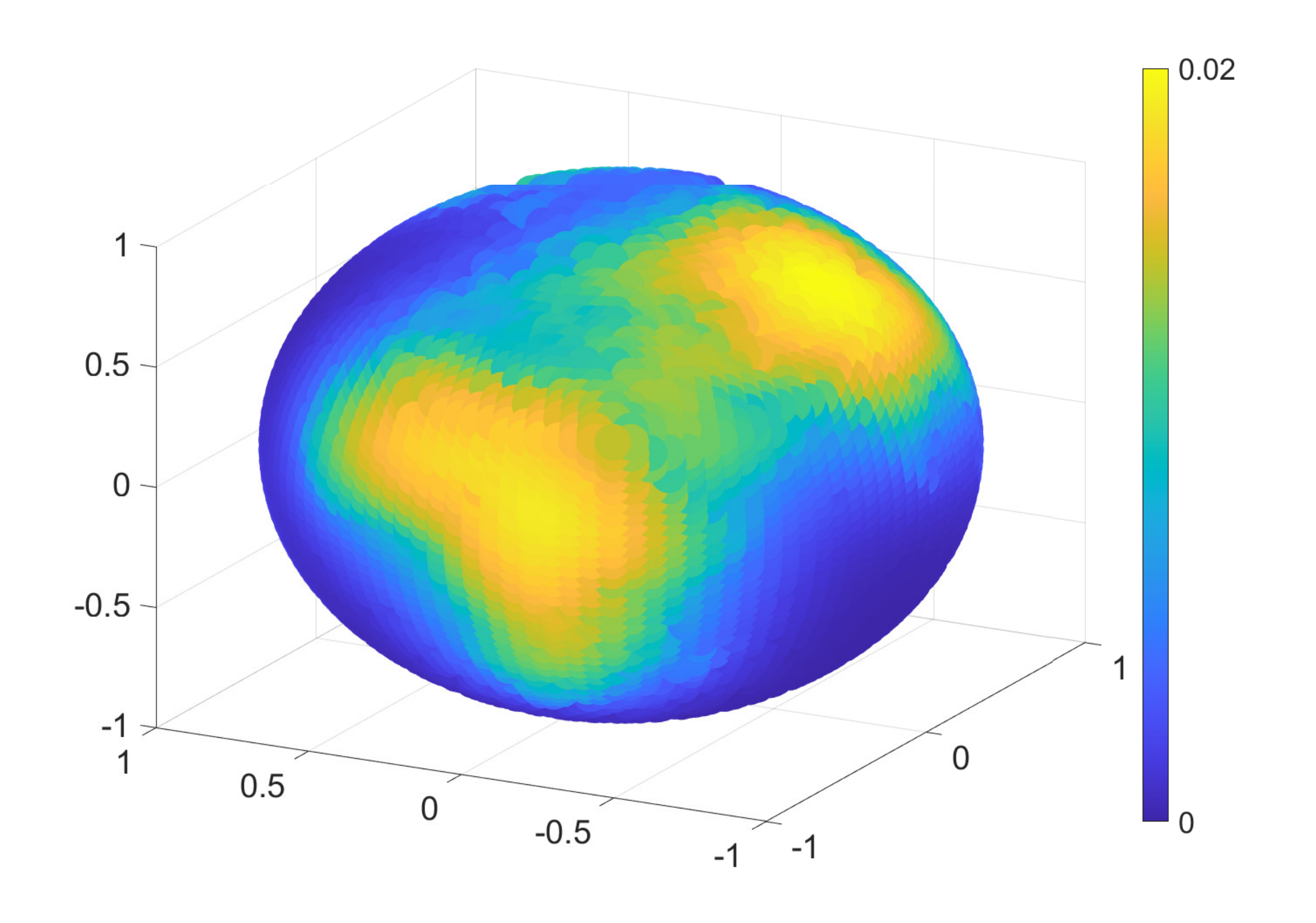}\label{fig:globeSolution}}
	\caption{\subref{fig:globeDensities}~Density functions for the world map given on a $N=5098$ point latitude-longitude grid and \subref{fig:globeSolution}~computed solution $u$ using the logarithmic cost.}
	\label{fig:globe}
\end{figure}

\subsection{Nonsmooth Examples}\label{nonsmoothexamples}








Finally, we present the results of a computation (using the squared geodesic cost) where $f_1$ is unbounded and $f_2$ is not Lipschitz. Recall that this is a situation where the solution $u$ is only guaranteed to be $C^1(\mathbb{S}^2)$ and the non-Lipschitz property of $f_2$ can easily cause issues regarding monotonicity and consistency. However, these issues can be resolved using the ideas in~\autoref{sec:nonsmooth}. The density functions are given by
\begin{equation}
\begin{cases}
f_1(\theta, \phi) =  \frac{1}{2\pi \cdot 1.86691}\theta^{-1/4} \\
f_2(\theta, \phi) = (1-\epsilon)\frac{\theta^{3/4}}{17.2747} + \frac{\epsilon}{4\pi}.
\end{cases}
\end{equation}
where $\epsilon = 0.5$. 

Despite the very strong singularities present in this example, our numerical method has no difficulty computing a solution.  The density functions and computed gradient are shown in Figure~\ref{fig:nonsmooth}.  As expected, we observe mass being transported downard away from the singularity.

\begin{figure}[htp]
	\subfigure[]{\includegraphics[width=0.8\textwidth]{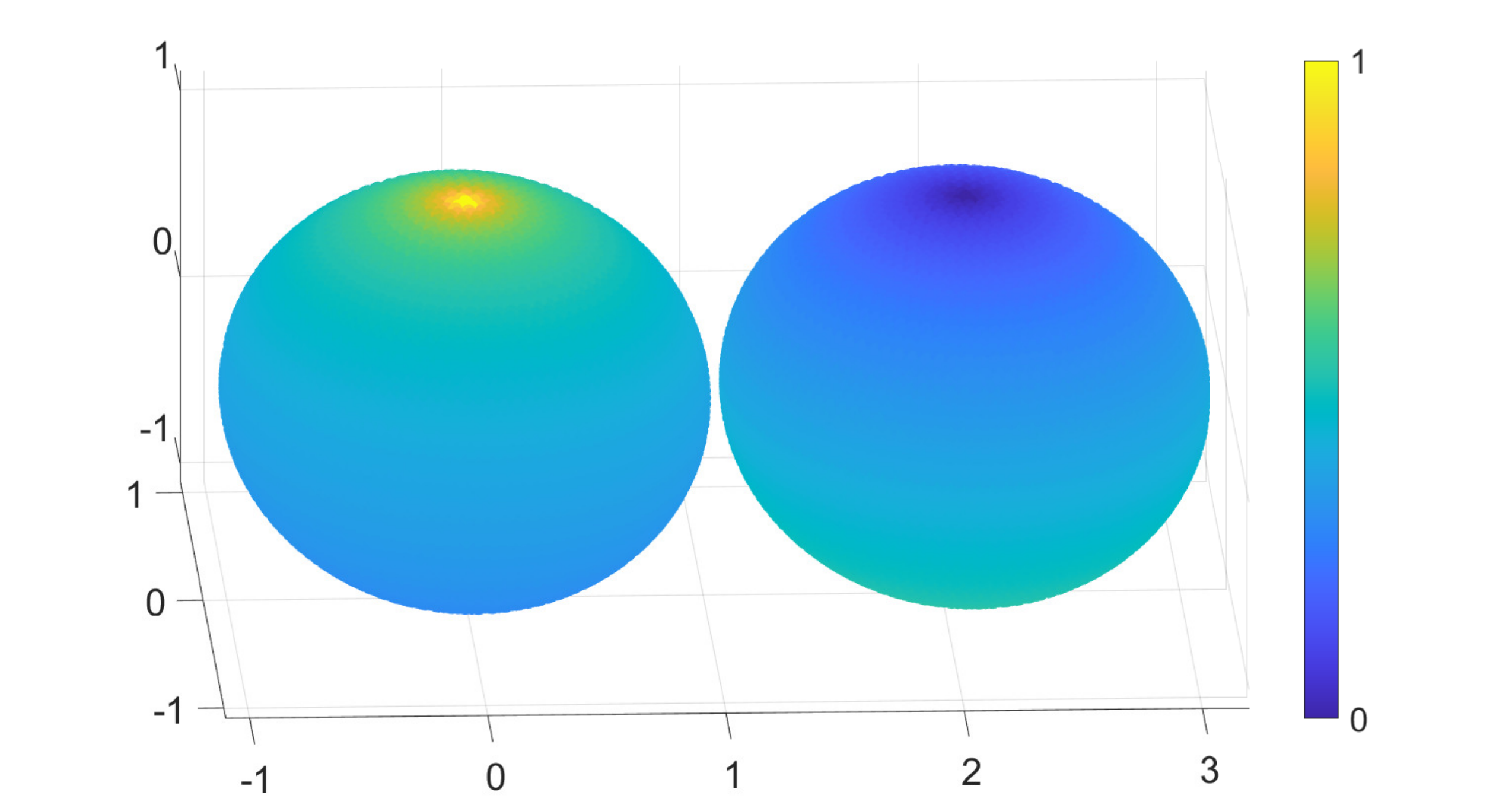}\label{fig:nonsmoothData}}
	\subfigure[]{\includegraphics[width=0.8\textwidth]{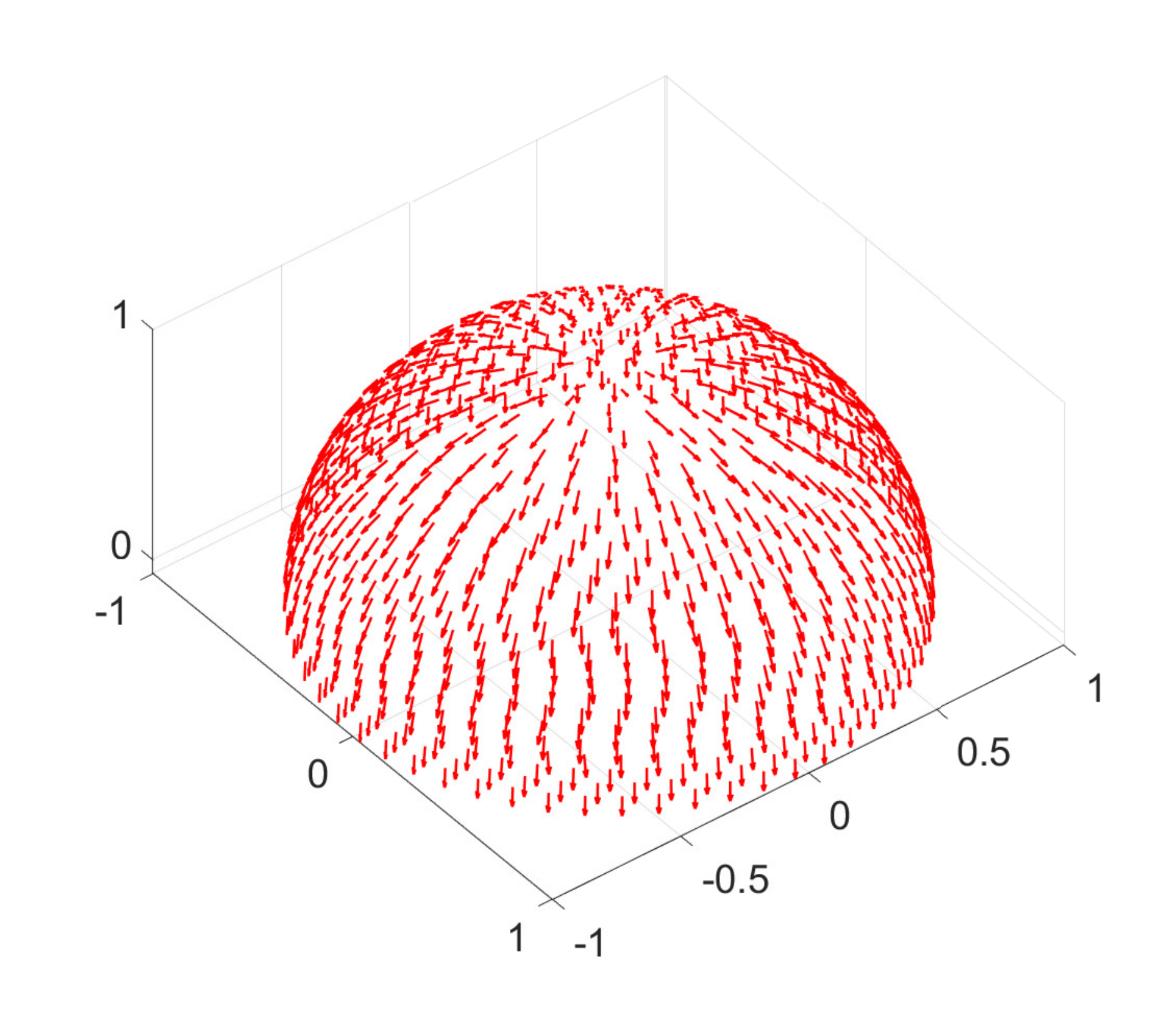}\label{fig:nonsmoothGrad}}
	\caption{\subref{fig:nonsmoothData}~Unbounded source and non-Lipschitz target densities and \subref{fig:nonsmoothGrad}~resulting gradient (visualized for the northern hemisphere) computed on a  $N=7722$ point layered grid. }
	\label{fig:nonsmooth}
\end{figure}

\section{Conclusion}\label{sec:conclusion}

We have introduced a provably convergent numerical method for solving the Optimal Transport problem on the sphere.  The method applies to both the squared geodesic cost function (important in mesh generation) and the logarithmic cost function (arising in the reflector antenna problem).  This method involves replacing the PDE on the manifold with equivalent PDEs on local tangent planes using a careful choice of geodesic normal coordinates.  The resulting PDE was approximated using monotone schemes inspired by recent methods for solving the \MA equation.  However, significant new techniques were introduced to handle the complicated nonlinear combination of gradient and Hessian terms.  Moreover, we introduced a smooth cutoff for the logarithmic cost function to enable well-posed, consistent, and monotone discretizations even in the presence of this potentially unbounded cost function.

We produced an implementation of this method and demonstrated via computations how our numerical method is able to address issues heretofore unresolved.  Notably, the method can handle both structured and unstructured grids, non-smooth data, and produces compelling results in examples closely related to both the moving mesh and reflector antenna design problems.

\bibliographystyle{plain}
\bibliography{OTonSphere2}

\begin{thebibliography}{10}

\bibitem{BSNum}
G.~Barles and P.~E. Souganidis.
\newblock Convergence of approximation schemes for fully nonlinear second order
  equations.
\newblock {\em Asymptotic Analysis}, 4:271--283, 1991.

\bibitem{BenamouBrenier}
J.-D. Benamou and Y.~Brenier.
\newblock A computational fluid mechanics solution to the {M}onge-{K}antorovich
  mass transfer problem.
\newblock {\em Numerische Mathematik}, 84(3):375--393, 2000.

\bibitem{BenamouDuval}
J.-D. Benamou and V.~Duval.
\newblock Minimal convex extensions and finite difference discretisation of the
  quadratic {M}onge-{K}antorovich problem.
\newblock {\em European Journal of Applied Mathematics}, pages 1--38, 2017.

\bibitem{BFO_OTNum}
J.-D. Benamou, B.~D. Froese, and A.~M. Oberman.
\newblock Numerical solution of the optimal transportation problem using the
  {M}onge-{A}mp\`ere equation.
\newblock {\em J. Comput. Phys.}, 260:107--126, 2014.

\bibitem{BuddWilliams}
C.~J. Budd and JF~Williams.
\newblock Moving mesh generation using the parabolic {M}onge-{A}mp{\`e}re
  equation.
\newblock {\em SIAM Journal on Scientific Computing}, 31(5):3438--3465, 2009.

\bibitem{CIL}
M.~G. Crandall, H.~Ishii, and P.-L. Lions.
\newblock User's guide to viscosity solutions of second order partial
  differential equations.
\newblock {\em Bulletin of the American Mathematical Society}, 27(1):1--67,
  July 1992.

\bibitem{Cullen}
M.~J.~P. Cullen, J.~Norbury, and R.~J. Purser.
\newblock Generalised {L}agrangian solutions for atmospheric and oceanic flows.
\newblock {\em SIAM J. Appl. Math.}, 51(1):20--31, 1991.

\bibitem{EF_Seismic}
B.~Engquist and B.~D. Froese.
\newblock Application of the wasserstein metric to seismic signals.
\newblock {\em Communications in Mathematical Sciences}, 12(5):979--988, 2014.

\bibitem{FrischUniv}
U.~Frisch, S.~Matarrese, R.~Mohayaee, and A.~Sobolevski.
\newblock A reconstruction of the initial conditions of the universe by optimal
  mass transportation.
\newblock {\em Nature}, 417, 2002.

\bibitem{FroeseMeshfreeEigs}
B.~D. Froese.
\newblock Meshfree finite difference approximations for functions of the
  eigenvalues of the {Hessian}.
\newblock {\em Numer. Math.}, 138(1):75--99, 2018.

\bibitem{FO_MATheory}
B.~D. Froese and A.~M. Oberman.
\newblock Convergent finite difference solvers for viscosity solutions of the
  elliptic {M}onge-{A}mp\`ere equation in dimensions two and higher.
\newblock {\em SIAM J. Numer. Anal.}, 49(4):1692--1714, 2011.

\bibitem{GlimmOliker_SingleReflector}
T.~Glimm and V.~Oliker.
\newblock Optical design of single reflector systems and the
  {M}onge-{K}antorovich mass transfer problem.
\newblock {\em Journal of Mathematical Sciences}, 117(3):4096--4108, 2003.

\bibitem{HakerRegistration}
S.~Haker, L.~Zhu, A.~Tannenbaum, and S.~Angenent.
\newblock Optimal mass transport for registration and warping.
\newblock {\em Int. J. Comput. Vision}, 60(3):225--240, 2004.

\bibitem{HamfeldtBVP2}
B.~D. Hamfeldt.
\newblock Convergence framework for the second boundary value problem for the
  {M}onge-{A}mp{\`e}re equation.
\newblock {\em {SIAM} Journal on Numerical Analysis}, 57(2):945--971, January
  2019.

\bibitem{HT_OTonSphere}
B.~F. Hamfeldt and A.~G.~R. Turnquist.
\newblock A convergence framework for optimal transport on the sphere.
\newblock {\em arXiv preprint arXiv:2103.05739}, 2021.

\bibitem{Rubinstein_OT}
M.~Lindsey and Y.~A. Rubinstein.
\newblock Optimal transport via a {M}onge-{A}mp\`ere optimization problem.
\newblock {\em SIAM Journal on Mathematical Analysis}, 49(4):3073--3124, 2017.

\bibitem{LoeperReg}
G.~Loeper.
\newblock On the regularity of solutions of optimal transportation problems.
\newblock {\em Acta Mathematica}, 202:241--283, 2009.

\bibitem{Loeper_OTonSphere}
G.~Loeper.
\newblock Regularity of optimal maps on the sphere: The quadratic cost and the
  reflector antenna.
\newblock {\em Archive for rational mechanics and analysis}, 199(1):269--289,
  2011.

\bibitem{McRae_OTonSphere}
A.~T. McRae, C.~J. Cotter, and C.~J Budd.
\newblock Optimal-transport-based mesh adaptivity on the plane and sphere using
  finite elements.
\newblock {\em SIAM Journal on Scientific Computing}, 40(2):A1121--A1148, 2018.

\bibitem{ObermanSINUM}
A.~M. Oberman.
\newblock Convergent difference schemes for degenerate elliptic and parabolic
  equations: {H}amilton--{J}acobi equations and free boundary problems.
\newblock {\em SIAM J. Numer. Anal.}, 44(2):879--895, 2006.

\bibitem{Peyre_ComputationalOT}
G.~Peyr{\'e} and M.~Cuturi.
\newblock Computational optimal transport: With applications to data science.
\newblock {\em Foundations and Trends{\textregistered} in Machine Learning},
  11(5-6):355--607, 2019.

\bibitem{Prins_OT}
C.~R. Prins, R.~Beltman, J.~H.~M. ten Thije~Boonkkamp, W.~L. IJzerman, and
  T.~W. Tukker.
\newblock A least-squares method for optimal transport using the
  {M}onge-{A}mp{\`e}re equation.
\newblock {\em SIAM Journal on Scientific Computing}, 37(6):B937--B961, 2015.

\bibitem{RomijnSphere}
L.~B. Romijn, J.~H.~M. ten Thije~Boonkkamp, and W.~L. IJzerman.
\newblock Inverse reflector design for a point source and far-field target.
\newblock {\em Journal of Computational Physics}, 408:109283, 2020.

\bibitem{SchaefferHou}
H.~Schaeffer and T.~Y. Hou.
\newblock An accelerated method for nonlinear elliptic {PDE}.
\newblock {\em Journal of Scientific Computing}, 69(2):556--580, 2016.

\bibitem{Schmitzer_OT}
B.~Schmitzer.
\newblock A sparse multiscale algorithm for dense optimal transport.
\newblock {\em Journal of Mathematical Imaging and Vision}, 56(2):238--259,
  2016.

\bibitem{Trudinger_GJE}
N.~S. Trudinger.
\newblock On the local theory of prescribed {J}acobian equations.
\newblock {\em Discrete \& Continuous Dynamical Systems-A}, 34(4):1663, 2014.

\bibitem{Wang_Reflector2}
X.-J. Wang.
\newblock On the design of a reflector antenna {II}.
\newblock {\em Calculus of Variations and Partial Differential Equations},
  20(3):329--341, 2004.

\bibitem{Weller_OTonSphere}
H.~Weller, P.~Browne, C.~Budd, and M.~Cullen.
\newblock Mesh adaptation on the sphere using optimal transport and the
  numerical solution of a {M}onge-{A}mp{\`e}re type equation.
\newblock {\em Journal of Computational Physics}, 308:102--123, 2016.

\bibitem{Yadav_MA}
N.~K. Yadav, J.~H.~M. ten Thije~Boonkkamp, and W.~L. Ijzerman.
\newblock A {M}onge-{A}mp{\`e}re problem with non-quadratic cost function to
  compute freeform lens surfaces.
\newblock {\em Journal of Scientific Computing}, 80(1):475--499, 2019.

\end{thebibliography}

{\appendix
\section{Derivation of the Mixed Hessian}\label{app:mixedHessian}
In this appendix, we fill in the details of the derivation of simple expressions for the determinant of the mixed Hessian.  For each cost function, we take the following approach:
\begin{enumerate}
\item Introduce orthogonal perturbations $\Delta p_1, \Delta p_2 \in \Tf_x$ such that $\Delta p_1\cdot p = 0$ and $\Delta p_2 = \hat{p} \|\Delta p_2\|$.
\item Establish that $T(x,p)-T(x,p+\Delta p_1)$ and $T(x,p)-T(x,p+\Delta p_2)$ are orthogonal to leading order.
\item Compute the change of area formula
\begin{align*}
 \left|\det\right.&\left.(D_pT(x,p))\right|  \\ &=\lim_{\left\Vert \Delta p_1 \right\Vert, \left\Vert \Delta p_2 \right\Vert \rightarrow 0} \frac{d_{\mathbb{S}^2} \left( T(x,p), T(x,p+ \Delta p_1) \right) d_{\mathbb{S}^2} \left( T(x,p), T(x,p + \Delta p_2) \right) }{\left\Vert \Delta p_1 \right\Vert \left\Vert \Delta p_2 \right\Vert}\\
&=\lim_{\left\Vert \Delta p_1 \right\Vert, \left\Vert \Delta p_2 \right\Vert \rightarrow 0} \frac{\| T(x,p)-T(x,p+\Delta p_1)\| \| T(x,p)-T(x,p+\Delta p_1)\| }{\left\Vert \Delta p_1 \right\Vert \left\Vert \Delta p_2 \right\Vert}
\end{align*}
where we can simplify the formulas by using the fact that
\[\left\Vert T(x,p) - T(x,p+ \Delta p)\right\Vert = d_{\mathbb{S}^2}\left( T(x,p),T(x,p+\Delta p) \right) + \mathcal{O}\left(\left\Vert \Delta p \right\Vert^2 \right).\]
\end{enumerate}

\subsection{Squared geodesic cost}
We begin with the squared geodesic cost, recalling that the mapping $T$ has the explicit form
\[ T(x,p) = \cos\left(\norm{ p } \right) x + \sin\left(\norm{ p } \right)\frac{p}{ \norm{ p }}. \]

First consider a perturbation satisfying $\Delta p_1 \cdot p  = 0$.
\begin{align*}
T(x, p) - T(x, p+ \Delta p_1) = &x \left( \cos \left\Vert p \right\Vert - \cos \left\Vert p + \Delta p_1 \right\Vert \right)\\ &+ \frac{p}{\left\Vert p \right\Vert} \sin \left\Vert p \right\Vert - 
\frac{p + \Delta p_1}{\left\Vert p + \Delta p_1 \right \Vert} \sin \left\Vert p + \Delta p_1 \right\Vert
\end{align*}

Now, since $p$ and $\Delta p_1$ are orthogonal, 
\[\left\Vert p + \Delta p_1 \right\Vert = \sqrt{\left\Vert p \right\Vert^2 + \left\Vert \Delta p_1 \right\Vert^2} = \|p\| + \bO\left(\| \Delta p_1\|^2\right).\]

Thus to leading order we obtain
\[ T(x, p) - T(x, p+ \Delta p_1) =    \Delta p_1  \frac{\sin \left\Vert p \right\Vert}{\left\Vert p \right\Vert} +\bO\left(\left\Vert \Delta p_1 \right\Vert^2\right).\]

%
%
%

Now, suppose that $\Delta p_2 = \left\Vert \Delta p_2 \right\Vert \frac{p}{\left\Vert p \right\Vert}$. In this case, $\left\Vert p + \Delta p_2 \right\Vert = \left\Vert p \right\Vert + \left\Vert \Delta p_2 \right\Vert$.  As before, we compute to leading order:
\begin{align*}
T(x, p) &- T(x, p+ \Delta p_2) \\&= x \left( \cos \left\Vert p \right\Vert - \cos \left\Vert p + \Delta p_2 \right\Vert \right)+ \frac{p}{\left\Vert p \right\Vert} \sin \left\Vert p \right\Vert - 
\frac{p + \Delta p_2}{\left\Vert p + \Delta p_2 \right \Vert} \sin \left\Vert p + \Delta p_2 \right\Vert\\
  &= x \left\Vert \Delta p_2 \right\Vert \sin \left\Vert p \right\Vert - \frac{p}{\left\Vert p \right\Vert} \left\Vert \Delta p_2 \right\Vert \cos \left\Vert p \right\Vert + \frac{p}{\left\Vert p \right\Vert^2} \left\Vert \Delta p_2 \right\Vert \sin \left\Vert p \right\Vert - \frac{\Delta p_2}{\left\Vert p \right\Vert} \sin\left\Vert p \right\Vert\\
	&\phantom{=}+ \bO\left(\left\Vert \Delta p_2 \right\Vert^2\right)\\
	&= x \left\Vert \Delta p_2 \right\Vert \sin \left\Vert p \right\Vert - \frac{p}{\left\Vert p \right\Vert} \left\Vert \Delta p_2 \right\Vert \cos \left\Vert p \right\Vert + \bO\left(\left\Vert \Delta p_2 \right\Vert^2\right).
\end{align*}

Now since $x\cdot\Delta p_1 = p\cdot \Delta p_1 = \Delta p_1\cdot \Delta p_2 = 0$, we can easily verify that
\[ (T(x, p) - T(x, p+ \Delta p_1)) \cdot (T(x, p) - T(x, p+ \Delta p_2)) = o\left(\|\Delta p_1\| + \|\Delta p_2\|\right) \]
so that the perturbations in the map are indeed orthogonal to leading order.

Now we can use orthogonality to easily compute the magnitudes of these perturbations via
\[ \|T(x, p) - T(x, p+ \Delta p_1)\|^2 =  \|\Delta p_1\|^2  \frac{\sin^2 \left\Vert p \right\Vert}{\left\Vert p \right\Vert^2} +o\left(\left\Vert \Delta p_1 \right\Vert^2\right) \]
and
\begin{align*}
\|T(x, p) - T(x, p+ \Delta p_2)\|^2 &= \|\Delta p_2\|^2 \sin^2\|p\| + \|\Delta p_2\|^2 \cos^2\|p\| + o\left(\|\Delta p_2\|^2\right)\\
  &=  \|\Delta p_2\|^2 + o\left( \|\Delta p_2\|^2\right).
\end{align*}
where we have used the fact that $x$ and $p/\|p\|$ are unit vectors.

Then we can compute the change of area formula as
\begin{align*}
\left|\det\right.&\left.(D_pT(x,p))\right|  \\ &=\lim_{\left\Vert \Delta p_1 \right\Vert, \left\Vert \Delta p_2 \right\Vert \rightarrow 0} \frac{\| T(x,p)-T(x,p+\Delta p_1)\| \| T(x,p)-T(x,p+\Delta p_2)\| }{\left\Vert \Delta p_1 \right\Vert \left\Vert \Delta p_2 \right\Vert}\\
  &= \lim_{\left\Vert \Delta p_1 \right\Vert, \left\Vert \Delta p_2 \right\Vert \rightarrow 0}  \frac{\|\Delta p_1\| \|\Delta p_2\| \sin\|p\| / \|p\| + o\left(\| \Delta p_1\| \|\Delta p_2\|\right)}{\left\Vert \Delta p_1 \right\Vert \left\Vert \Delta p_2 \right\Vert}\\
	&= \frac{\sin\|p\|}{\|p\|}.
\end{align*}

Hence, the determinant of the mixed Hessian for the squared geodesic cost is
\begin{equation}
\left\vert \det D^2_{xy} c(x,y) \right\vert = \frac{\left\Vert p \right\Vert}{\sin \left\Vert p \right\Vert}.
\end{equation}

\subsection{Logarithmic cost}
Now we perform the same procedure for the logarithmic map, which has the explicit form
\[ T(x,p) = x\frac{\norm{ p }^2-1/4}{\norm{ p }^2+1/4}-\frac{p}{\norm{ p }^2+1/4}.\]

We again begin with a perturbation satisfying $p\cdot\Delta p_1 = 0$ so that
\[\left\Vert p + \Delta p_1 \right\Vert = \|p\| + \bO\left(\| \Delta p_1\|^2\right).\]

To leading order, we can compute
\begin{align*}
T(x,p)-T(x,p+\Delta p_1) &= x\frac{\norm{ p }^2-1/4}{\norm{ p }^2+1/4}- x\frac{\norm{ p + \Delta p_1 }^2-1/4}{\norm{ p + \Delta p_1 }^2+1/4}\\
  &\phantom{=}-\frac{p}{\norm{ p }^2+1/4} + \frac{p+\Delta p_1}{\norm{ p + \Delta p_1 }^2+1/4}\\
	&= \frac{\Delta p_1}{\|p^2\|+1/4} + \bO\left(\| \Delta p_1\|^2\right).
\end{align*}

Next we consider an orthogonal perturbation $\Delta p_2 = \left\Vert \Delta p_2 \right\Vert \frac{p}{\left\Vert p \right\Vert}$ so that \[\|p+\Delta p_2\|^2 = \|p\|^2 + 2\|p\|\|\Delta p_2\| + \bO\left(\| \Delta p_2\|^2\right).\]
Now we can compute to leading order
\begin{align*}
\|T(x, p) &- T(x,p+\Delta p_2)\| = x\frac{\norm{ p }^2-1/4}{\norm{ p }^2+1/4}-x\frac{\norm{ p }^2+2\|p\| \|\Delta p_2\|-1/4}{\norm{ p }+2\|p\| \|\Delta p_2\|^2+1/4}\\
 &\phantom{=} - \frac{p}{\norm{ p }^2+1/4} + \frac{p + \Delta p_2}{\norm{ p }^2 + 2\|p\| \|\Delta p_2\|+1/4} + \bO\left(\| \Delta p_2\|^2\right)\\
 &= -x\frac{2\|p\| \|\Delta p_2\|}{\|p\|^2 + 1/4} + x\frac{2\|p\| \|\Delta p_2\|(\|p\|^2-1/4)}{(\|p\|^2+1/4)^2}\\
 &\phantom{=} + \frac{\Delta p_2}{\|p\|^2+1/4} - p\frac{2\|p\| \|\Delta p_2\|}{(\|p\|^2+1/4)^2}+ \bO\left(\| \Delta p_2\|^2\right)\\
 &= -x\frac{\|p\| \|\Delta p_2\|}{(\|p\|^2+1/4)^2} + \hat{p}\frac{\|\Delta p_2\|(1/4-\|p\|^2)}{(\|p\|^2+1/4)^2} + \bO\left(\| \Delta p_2\|^2\right).
\end{align*}

Since $x$, $p$, and $\Delta p_1$ are mutually orthogonal, we can immediately verify that 
\[ (T(x, p) - T(x, p+ \Delta p_1)) \cdot (T(x, p) - T(x, p+ \Delta p_2)) = o\left(\|\Delta p_1\| + \|\Delta p_2\|\right) \]
so that the perturbations in the mapping are again orthogonal to leading order.

Next we compute the lengths of the perturbations using orthogonality:
\[
\left\Vert T(x, p) - T(x,p+\Delta p_1) \right\Vert = \left\Vert \Delta p_1 \right\Vert \frac{1}{\left\Vert p \right\Vert^2 + 1/4} + o(\left\Vert \Delta p_1 \right\Vert)
\]
and
\begin{align*}
\|T(x,p)-T(x,p+\Delta p_2)\|^2 &= \frac{\|p\|^2\|\Delta p_2\|^2 + \|\Delta p_2\|^2(1/4-\|p\|^2)^2}{(\|p\|^2+1/4)^4}+ o(\left\Vert \Delta p_2 \right\Vert^2)\\
  &= \frac{\|\Delta p_2\|^2}{(\|p\|^2+1/4)^4}\left(\|p\|^4 + \frac{1}{2}\|p\|^2+\frac{1}{16}\right)+ o(\left\Vert \Delta p_2 \right\Vert^2)\\
	&= \frac{\|\Delta p_2\|^2}{(\|p\|^2+1/4)^2} + o(\left\Vert \Delta p_2 \right\Vert^2).
\end{align*}

Then we can again compute the change of area formula as
\begin{align*}
\left|\det\right.&\left.(D_pT(x,p))\right|  \\ &=\lim_{\left\Vert \Delta p_1 \right\Vert, \left\Vert \Delta p_2 \right\Vert \rightarrow 0} \frac{\| T(x,p)-T(x,p+\Delta p_1)\| \| T(x,p)-T(x,p+\Delta p_2)\| }{\left\Vert \Delta p_1 \right\Vert \left\Vert \Delta p_2 \right\Vert}\\
  &= \lim_{\left\Vert \Delta p_1 \right\Vert, \left\Vert \Delta p_2 \right\Vert \rightarrow 0}  \frac{\|\Delta p_1\| \|\Delta p_2\| / (\|p\|^2+1/4)^2 + o\left(\| \Delta p_1\| \|\Delta p_2\|\right)}{\left\Vert \Delta p_1 \right\Vert \left\Vert \Delta p_2 \right\Vert}\\
	&= \frac{1}{(\|p\|^2 + 1/4)^2}.
\end{align*}

Hence, the determinant of the mixed Hessian for the logarithmic cost is
\begin{equation}
\left\vert \det D^2_{xy} c(x,y) \right\vert = \left( \left\Vert p \right\Vert^2 + 1/4 \right)^2.
\end{equation}

}

\end{document}